\newtheorem{theorem}{Theorem}[section]
\newtheorem{lemma}[theorem]{Lemma}
\newtheorem*{remark}{Remark}
\newcommand{\boldu}{\mathbf{u}}
\newcommand{\Reals}{\mathbb{R}}
\newcommand{\Rn}{\mathbb{R}^n}
\title{Instability of boundary equilibrium for Reaction-Diffusion system of a complex-balanced reaction network}
\author{Jiaxin Jin}
\begin{document}

\maketitle

\begin{abstract}
\noindent In this paper we study the local instability to the boundary equilibria and the local stability to the positive equilibria for some chemical reaction-diffusion systems.
We first analyze instablity of three-species system with boundary equilibria in some stoichiometric classes. Then we prove the convergence to the positive equilibria for a general reversible reaction-diffusion network as long as the initial data is closed enough to the the positive equilibria.

\end{abstract}

\section{Introduction} 
The dynamical behaviour of the mass-action reaction systems has been studied over the last fifty years. The work of Horn, Jackson and Feinberg \cite{Horn.1972, Horn.1974, Feinberg.1972} has been successful in showing existence and stability of positive equilibria. Their work shows that the complex balanced mass-action systems have unique positive equilibria which is locally asymptotically stable independently of reaction rate constant values. Moreover, Horn conjectured that the unique equilibria are in fact globally asymptotically stable \cite{Horn.1974} which is known as the {\em Global Attractor Conjecture}. The latest result is a proposed proof of the Global Attractor Conjecture in full generality \cite{Craciun.gac}.   

For the corresponding reaction-diffusion models, many recent papers have focused on extending the results above in the PDE setting.
A promising way to connect the PDE with ODE models is by using entropy techniques. Recent results by Desvillettes, Fellner and collaborators \cite{DFPV07, DFT2} showed that in the absence of boundary equilibria or special cases of networks with boundary equilibria, the positive equilibrium of the complex balanced reaction-diffusion system attracts all solutions with positive initial data.
The recent paper by Pierre et al \cite{PSU17} studies the general case of a reversible reaction; the authors prove that if the solution is globally (in time) essentially bounded, the solution converges exponentially to the complex-balanced equilibrium. 
However, the general case of systems with boundary equilibria remains open, and the analysis of such systems is on a case-by-case basis.
The most general result on the convergence to equilibrium is \cite{FT18} show that if the system does not have boundary equilibria, then any renormalised solution converges exponentially to the complex balanced equilibrium with a rate, which can be computed explicitly by applying the so-called entropy method, the author.

The network $A+2B\rightleftharpoons B+C$ was considered in \cite{GJCA2019} where it was shown that in one spatial dimension space solutions converge asymptotically to the unique positive equilibrium at explicit rates. However, in higher dimension without globally (in time) essentially boundness the global behaviour of solutions is unknown. Therefore we are interested in local behaviour (close-to-equilibrium regularity) instead. 
The paper \cite{Bao2018} shows that in small dimensions, strong solutions exist for systems with restricted power of non-linear polynomial provided that the initial data is close enough to the equilibrium in $L^{2}$ sense.
In another recent paper \cite{CMT2019}, authors prove that
as long as the closeness to equilibrium is measured in $L^{\infty}$ norm, the convergence holds for arbitrary dimension.

Our paper studies the network $A+2B\rightleftharpoons B+C$ but in three dimensional space by using the elliptic and energy estimate to show that the unique boundary equilibria is locally instable (Theorem \ref{instable theorem}). 
Then for the general case of one reversible pair of reaction 
$
\alpha_1 A_1 + ... +  \alpha_n A_n \rightleftharpoons \beta_1 A_1 + ... +  \beta_n A_n
$,
we use the same technique but be able to show the locally stability of the unique positive equilibria (Theorem \ref{stable theorem in general}).
It worthes mentioning that the local stability around positive equilibrium in $L^{\infty}$ norm for this system can be achieved by \cite{CMT2019} and \cite{PSU17}. This paper provides a different method to show  the local stability in $H^2$ norm. Moreover we use this method using elliptic and energy estimate to show local instability around boundary equilibrium.

In the remainder of this introductory section we set up terminology and notation, we discuss some of the techniques used here and in previous work, and we state our main theorems. Sections 2 contains the proofs of the results for the local instability for $A+2B\rightleftharpoons B+C$ and Section 3 contains the proofs of local stability for $
\alpha_1 A_1 + ... +  \alpha_n A_n \rightleftharpoons \beta_1 A_1 + ... +  \beta_n A_n
$.

\subsection{Terminology and previous results}

We consider $0<T\leq\infty$ and a semilinear parabolic system 
\begin{equation*} 
\boldu_t-\mathcal{D}\Delta \boldu=R(\boldu)\mbox{ in }\Omega\times(0,T)
\end{equation*}
with an initial data $$\boldu(\cdot,0)=\boldu_0\mbox{ in }\Omega,$$
where $\boldu:\Omega\times[0,T)\rightarrow\Rn$ is a vector of concentrations at spatial position $x\in \Omega$  (an open and bounded subset of $\mathbb{R}^3$) and time $t\in[0,\infty)$, $\mathcal{D}$ is a positive definite, diagonal $n\times n$ matrix and we consider Neumann boundary conditions throughout this work:
$$\frac{\partial u_i}{\partial \mathbf{n}} := \nabla u_i\cdot \mathbf{n} =0\mbox{ on }\partial\Omega\times(0,T),\ i=1,...,n,$$
where $\mathbf{n}$ is the outer normal vector at the boundary and $R:\Rn\rightarrow \Rn$ is a vector field whose components are polynomials and it is determined by the chemical reactions under consideration.

For example, the single reaction $A+B\overset{k}{\rightarrow} C$.
Here $A,\ B,$ and $C$ are the three {\em species} of the network, and $A+B$ and $C$ are its {\em complexes}. In general, complexes are formal linear combinations of species with non-negative integer coefficients, and sit on both sides of a reaction arrow. It is useful to think of complexes as vectors in a natural way, for example $A+B$ corresponds to $y=(1,1,0)$, and $C$ to $y'=(0,0,1)$. 
The concentrations of $A,\ B,\ C$ are non-negative functions of time and space and are collected in the {\em concentration vector} $\boldu=(a,b,c)$. 
The {\em reaction rate} of a reaction is given by mass-action, and is proportional to the concentration of each reactant species. This way, the reaction $A+B\overset{k}{\rightarrow} C$ has rate $kab$. The {\em reaction rate constant} $k$ is a reaction-specific positive number. 
In general, the rate of a the reaction $y\overset{k}{\to} y'$ is given by 
$$k\boldu^{y}=k\displaystyle\prod_{i=1}^nu_i^{y_i},$$  
where $n$ is the number of species, and complexes $y$ and $y'$ are viewed as vectors, as discussed above. 
Therefore, this is given by        
$$R(\boldu):=\sum_{y\to y'} k_{y\to y'}\boldu^{y}(y'-y),$$
where $k_{y \to y'}$ is the rate constant of $y\to y'$ and the summation is over all reactions $y\to y'$ in the network.

In general, we say that an equilibrium point $u_0$ of a {\em reaction system} (i.e. an equilibrium of the ODE system ${\bf u}_t=R({\bf u})$ without diffusion) is a {\em complex balanced equilibrium} if for all complexes $\bar y$ we have  
$$\sum_{\bar y \to y} k_{\bar y \to y}u_0^{\bar y} = \sum_{y \to \bar y} k_{y \to \bar y}u_0^{y}.$$ 
A reaction system is called {\em complex balanced} if it admits a positive complex balanced equilibrium. We call a reaction-diffusion system complex balanced if its corresponding reaction system is complex balanced. It was shown that all steady states of a complex balanced reaction-diffusion system are constant functions (do not depend on space), whose values equal the steady states of the corresponding (complex balanced) reaction system \cite{Mincheva}. We can therefore identify the steady states (equilibria) of complex balanced reaction-diffusion systems with those of corresponding reaction systems.

Reaction systems often admit linear first integrals, called {\em conservation laws}; for example, the system $A+2B \rightleftharpoons B+C$ has conservation laws $a+c$= {\em const.} and $b+c$= {\em const}.
In this paper, an {\em accessible boundary equilibrium} of a reaction network is an equilibrium on the boundary of the positive orthant which gives the same values of the conservation laws as some phase point with strictly positive coordinates. In paper \cite{DFT2} it was shown that for complex-balanced reaction-diffusion systems without accessible boundary equilibria, certain existence conditions imply convergence of solutions to positive equilibria. The reaction-diffusion systems we consider in this paper are complex-balanced with accessible boundary equilibria.

For complex balanced systems, recent work by Craciun \cite{Craciun.gac} proved the Global Attractor Conjecture which states that regardless of the existence of boundary equilibria, trajectories starting in the positive orthant converge to the unique positive equilibrium in the corresponding stoichiometric class.  
In the PDE case, the most general result concerns the case where there are no boundary equilibria. Desvillettes, Fellner and Tang \cite{DFT2} showed that under some initial condition, (weak or renormalized) solutions exponentially fast converge to the equilibrium which lies in the same stoichiometric class as the initial data via the use of {\em entropy-entropy dissipation inequality} (EEDI). Very recently, Cupps, Morgan and Tang \cite{CMT2019} showed that if initial condition is closed enough to the positive equilibria in $L^{\infty}$ sense, strong solutions will  exponentially converge to the equilibrium in $L^{\infty}$ sense via the duality method and the regularization of the heat operator.

\subsection{Instability}

The system we consider in this section is $A+2B \rightleftharpoons B+C$. 
The choice of spatial dimension $d=3$ and we assume $\Omega$ is connected and bounded domain in $\mathbb{R}^3$. 
In this paper, Our method  to prove both stability and instability seems confined to three-D or lower, as it uses the Sobolev embedding inequality where the $H^2$ norm of the solutions leads to the boundness of the $L^\infty$ norm.
Notice that by rescaling time $t$, space $x$ and the concentrations $(a,b,c)$, from \cite{GJCA2019} we can always assume that reaction rates and domain volume are 1.
$$
A+2B \rightleftharpoons B+C.
$$
The corresponding $3 \times 3$ reaction-diffusion system is
\begin{equation}
\begin{cases} \label{system}
\Tilde{a}_t-d_a \Delta \Tilde{a}= - \Tilde{a}\Tilde{b}^2+\Tilde{b}\Tilde{c} &  x \in \Omega, t > 0 \\
\Tilde{b}_t-d_b \Delta \Tilde{b}=- \Tilde{a}\Tilde{b}^2+\Tilde{b}\Tilde{c} &  x \in \Omega, t > 0 \\
\Tilde{c}_t-d_c \Delta \Tilde{c}=\Tilde{a}\Tilde{b}^2-\Tilde{b}\Tilde{c}  &  x \in \Omega, t > 0 \\
\frac{\partial \Tilde{a}}{\partial \mathbf{n}}= \frac{\partial \Tilde{b}}{\partial \mathbf{n}}=\frac{\partial \Tilde{c}}{\partial \mathbf{n}} =0 &  x \in \partial \Omega, t > 0\\
\Tilde{a}(x,0)=\Tilde{a}_0(x),\Tilde{b}(x,0)=\Tilde{b}_0(x),\Tilde{c}(x,0)=\Tilde{c}_0(x) &  x \in \Omega, \\
\end{cases}
\end{equation}
where $\boldu = (\Tilde{a},\Tilde{b},\Tilde{c})$ stands for the concentration of $(A,B,C)$. In this case $\mathcal{D}=\mathrm{diag}\{d_a,d_b,d_c\}\in M_{3\times 3}(\Reals)$ denotes the diagonal matrix of diffusion constants. 

Considering the reaction system, we have the following conservation laws;
\begin{equation}
\begin{split}  \label{conservation law}
& \int_{\Omega} \Tilde{a}(t,x) \ dx+\int_{\Omega} \Tilde{c}(t,x) \ dx=\int_{\Omega}\Tilde{a}_0(x) \ dx+\int_{\Omega} \Tilde{c}_0(x) \ dx := M_1,
\\& \int_{\Omega} \Tilde{b}(t,x) \ dx+\int_{\Omega} \Tilde{c}(t,x) \ dx=\int_{\Omega}\Tilde{b}_0(x) \ dx+\int_{\Omega} \Tilde{c}_0(x) \ dx 
:= M_2.
\end{split}
\end{equation}
From \cite{GJCA2019}, in the case $A+2B \rightleftharpoons B+C$ as long as $M_1 > M_2$ there are two types of equilibrium  $(a_{\infty},b_{\infty},c_{\infty})$ and $( a_{\infty} ,0,c_{\infty})$ following    \eqref{conservation law},
and we name 
$(a_{\infty},b_{\infty},c_{\infty})$ as unique positive equilibrium and $( a_{\infty} ,0, c_{\infty})$ as unique accessible boundary equilibrium. 
We exclude the case when $\int_{\Omega} b_0 dx = 0$, since in this situation the system degenerates to the heat equation and the solution converges to $({a}_{\infty} ,0, {c}_{\infty})$ because of $b$ being zero.

To show the instability of boundary equilibria, we define 
$y^{\intercal} = (u^{\intercal}, u^{\intercal}_t)$ with $
u =(\Tilde{a} - a_{\infty}, b,\Tilde{c} - c_{\infty})^{\intercal}
$ 
and get the equation for $y$ such that
$
y_t 
= L y + N(y)
$
where $L$ is the linear operator.
Also we introduce two norms 
$$
\|y\| \coloneqq \|u\|_2+\|u_t\|_2 \ \text{and} \  \interleave y \interleave \coloneqq \|u\|_{H^2}+\|u_t\|_2
$$
where $\| \cdot \|_2$ represents the $L^2$ norm and $\| \cdot \|_{H^2}$ represents the $H^2$ norm.
Our method to prove local instability for $A+2B\rightleftharpoons B+C$, as it first shows that the eigenvalues for operator $L$ is non-positive \cite{Strauss.1992} and then uses 
the energy estimate, elliptic estimate
\cite{ADN.1964},  and the rest is based on the argument of \cite{Guo-Strauss}\cite{GHS}. 
It is an important improvement that we can deal with this quadratic case in higher dimension
, since previous results only dealt with one dimension  \cite{GJCA2019}.

In this paper we will prove the instability statement for accessible boundary equilibria, namely:

\begin{theorem}     \label{instable theorem}
Consider a family of initial data $y^{\delta}(0) = \delta y_0$ with $\| y_0 \| = 1$, $\int_{\Omega} b_0 dx \neq 0$ $(b_0 \geq 0)$ and $\interleave  y_0 \interleave < \infty$ and let $\theta_0$ be a fixed  sufficiently small number. Then if $0 \leq t \leq T^{\delta} \sim \log \frac{\theta_0}{\delta}$, at the escape time
$$
\| y(T^{\delta}) \| \geq \tau_0 > 0
$$
where 
$\tau_0$ depends explicitly on $y_0$ and is independent of $\delta$.
\end{theorem}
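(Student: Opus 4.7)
The approach is the Guo--Strauss escape-time scheme adapted to reaction-diffusion: identify an unstable direction for the linearization, show the linear flow grows exponentially along it, and demonstrate that the low-order nonlinearity cannot prevent the solution from escaping to size $\theta_0$. The first step is to linearize \eqref{system} around $(a_\infty,0,c_\infty)$: the reaction Jacobian $L_1 := DR(a_\infty,0,c_\infty)$ has only one nonzero column (the $b$-column), giving $L_1 u = (c_\infty b,\,c_\infty b,\,-c_\infty b)$. Decomposing in Neumann eigenfunctions and noting that on each spatial mode $\mathcal{D}\Delta + L_1$ is (after a permutation) upper triangular, the spectrum of the $u$-block of $L$ reads $\{-d_a\lambda_k,\ c_\infty - d_b\lambda_k,\ -d_c\lambda_k : k\geq 0\}$ where $\{\lambda_k\}$ are the Neumann eigenvalues of $-\Delta$. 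The unique strictly positive spectral value is $\lambda^\ast := c_\infty$, attained only on the constant ($k=0$) mode, with right eigenvector $v = (1,1,-1)$ and left eigenvector $w = (0,1,0)$. The spectral projection $P$ onto the unstable eigenspace is therefore proportional to $y \mapsto \bigl(|\Omega|^{-1}\int_\Omega b\,dx\bigr) v$, so the hypothesis $\int_\Omega b_0\,dx \neq 0$ produces a quantitative constant $c_0 = c_0(y_0) > 0$ with $\|P y_0\| \geq c_0$.

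Next, write the Duhamel formula
\begin{equation*}
y(t) = \delta\, e^{tL} y_0 + \int_0^t e^{(t-s)L} N(y(s))\, ds,
\end{equation*}
and record two basic estimates. Since $\lambda^\ast$ is isolated in the spectrum of $L$, the semigroup splits as $e^{tL} = e^{\lambda^\ast t} P + e^{tL}(I-P)$ with the complementary part bounded uniformly in $t\geq 0$, yielding the lower bound $\|\delta e^{tL} y_0\| \geq c_0\, \delta\, e^{\lambda^\ast t}$ for sufficiently large $t$. For the nonlinearity, each component of $N(y)$ is a quadratic or cubic polynomial in the perturbation $u$ and linear in $u_t$, so by H\"older and the three-dimensional Sobolev embedding $H^2\hookrightarrow L^\infty$ one obtains $\|N(y)\|_2 \leq C\,\interleave y \interleave\,\|y\|$ as long as $\interleave y\interleave$ remains bounded. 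The higher norm is itself controlled by the Agmon--Douglis--Nirenberg elliptic estimate \cite{ADN.1964} applied to the stationary form $-\mathcal{D}\Delta u = -u_t + R(u + (a_\infty,0,c_\infty))$, yielding $\|u\|_{H^2} \leq C(\|u_t\|_2 + \|u\|_2)$ once $\|u\|_2$ is small enough to absorb the quadratic piece of $R$, while a parallel energy estimate for the time-differentiated system controls $\|u_t\|_2$.

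The conclusion follows from a standard bootstrap/escape argument. Fix $\theta_0$ small and set $T^\delta := \inf\{t > 0 : \|y(t)\| \geq \theta_0\}$. On $[0, T^\delta]$ the elliptic and energy estimates imply $\interleave y(t) \interleave \leq C(\theta_0 + \interleave y_0 \interleave)$, so the Duhamel nonlinear correction is bounded by $C\theta_0 \int_0^t e^{\lambda^\ast(t-s)} \|y(s)\|\,ds$. Subtracting this from the linear lower bound and using a Gronwall comparison, one obtains $\|y(t)\| \geq (c_0/2)\,\delta\, e^{\lambda^\ast t}$ on $[0, T^\delta]$ provided $\theta_0$ is small enough. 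This forces $T^\delta < \infty$ with $\delta\, e^{\lambda^\ast T^\delta} \sim \theta_0$, hence $T^\delta \sim (\log(\theta_0/\delta))/\lambda^\ast$, and $\|y(T^\delta)\| \geq (c_0/2)\theta_0 =: \tau_0$ with $\tau_0$ depending only on $y_0$ (through $c_0$) and $\theta_0$.

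The main technical obstacle is the gap between the two norms: the escape is measured in the $L^2$-based $\|\cdot\|$, but closing the nonlinear Duhamel estimate requires the $H^2$-based control $\interleave\cdot\interleave$ because the three-dimensional Sobolev embedding into $L^\infty$ is what tames the polynomial reaction term. The elliptic regularity gained from the diffusion, together with time-differentiated energy estimates, is the mechanism converting smallness in $\|\cdot\|$ into boundedness in $\interleave\cdot\interleave$; this is exactly why the argument is confined to spatial dimension at most three, and tracking constants carefully throughout the bootstrap is what makes $\tau_0$ genuinely independent of $\delta$.
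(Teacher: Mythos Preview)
Your proposal is correct and follows essentially the same Guo--Strauss escape-time strategy as the paper: linearize to identify the unstable eigenvalue $c_\infty$, combine Duhamel with the $H^2\hookrightarrow L^\infty$ Sobolev embedding and the Agmon--Douglis--Nirenberg elliptic estimate to control the quadratic/cubic nonlinearity, and run the bootstrap to the escape time. The only cosmetic differences are that you phrase the linear lower bound via the spectral projection onto the $(1,1,-1)$ eigenvector whereas the paper computes $\int_\Omega b\,dx$ (and then $\int_\Omega a\,dx$, $\int_\Omega c\,dx$) directly, and you define $T^\delta$ as a hitting time whereas the paper fixes $T^\delta=\tfrac{1}{c_\infty}\log\tfrac{\theta_0}{\delta}$ and closes the bootstrap with the explicit three-time ($T^\delta$, $T^*$, $T^{**}$) contradiction argument.
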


\begin{remark}
There exists a constant $C_P$ and $\lambda > 0$ such that 
$
\| e^{Lt} y_0 \| \geq C_P e^{\lambda t}
$
and $T^{\delta} = \frac{1}{\lambda} \log \frac{\theta_0}{\delta}$.
\end{remark}
\begin{remark}
In Section 2, we define $T^{*} = \sup_{t} \{\interleave y \interleave < \sigma \}$ 
where $\sigma$ is bounded and defined in Lemma \ref{sigma estimate}
and we can show that
$T^{*} < T^{\delta}$. Under the Sobolev embedding inequality, this guarantees the global existence of solution up to the escape time.
\end{remark}

We can use the same technique to adapt 
$
A_1 + ... + A_l +2B \rightleftharpoons B + C_1 + ... + C_r
$ which is more generalized
and we define $y^{\intercal} = (u^{\intercal}, u^{\intercal}_t)$ and 
$
u =(\Tilde{a_1} - {a}_{1,\infty},...,\Tilde{a_l} - {a}_{l,\infty}, b, \Tilde{c_1} - {c}_{1,\infty},...,\Tilde{c_r} - {c}_{r,\infty})^{\intercal}
,$
we can prove the similar instability statement for accessible boundary equilibria, namely:

\begin{theorem}     \label{instable theorem in general}
Consider a family of initial data $y^{\delta}(0) = \delta y_0$ with $\| y_0 \| = 1$, $\int_{\Omega} b_0 dx \neq 0$ $(b_0 \geq 0)$ and $\interleave  y_0 \interleave < \infty$ and let $\theta_0$ be a fixed  sufficiently small number. 
Then there exists a constant $C_P$ and $\lambda > 0$ such that
$$
\| e^{Lt} y_0 \| \geq C_P e^{\lambda t}
$$
where L is a linear operator such that 
$
y_t 
= L y + N(y)
$ and if $0 \leq t \leq T^{\delta} = \frac{1}{\lambda} \log \frac{\theta_0}{\delta}$, then at the escape time
$$
\| y(T^{\delta}) \| \geq \tau_0 > 0
$$
where $\tau_0$ depends explicitly on $y_0$ and is independent of $\delta$.
\end{theorem}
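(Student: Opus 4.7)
The proof plan closely follows the strategy of Theorem \ref{instable theorem}, replacing the specific reaction $A+2B\rightleftharpoons B+C$ by the more general one and noting only the places where small modifications are needed. The first step is to linearize around the boundary equilibrium $(a_{1,\infty},\ldots,a_{l,\infty},0,c_{1,\infty},\ldots,c_{r,\infty})$. The key observation is that $b_{\infty}=0$, so the forward rate $k_f\tilde a_1\cdots \tilde a_l b^2$ is purely quadratic in the perturbation while the backward rate $k_b b\tilde c_1\cdots\tilde c_r$ linearizes to $\mu b$, where $\mu := k_b c_{1,\infty}\cdots c_{r,\infty} > 0$. Consequently the linearized $b$-equation reduces to $b_t - d_b\Delta b = \mu b$, and the $a_i$ and $c_j$ components are passively driven by $\pm \mu b$. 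Restricted to the constant Neumann mode this $b$-equation has eigenvalue $\lambda = \mu > 0$, and the hypothesis $\int_\Omega b_0\,dx \neq 0$ guarantees a nonzero projection of $y_0$ onto this eigendirection, yielding $\|e^{Lt}y_0\| \geq C_P e^{\lambda t}$ with $C_P$ proportional to that projection.

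Next I would carry over the nonlinear and energy estimates. The reaction polynomial now has total degree $l+2$, but each term of the remainder $N(y)$ is still at least quadratic in the perturbation $u$ about the boundary equilibrium. In three spatial dimensions the Sobolev embedding $H^2(\Omega) \hookrightarrow L^{\infty}(\Omega)$ gives $\|N(y)\|_2 \leq C(\sigma)\interleave y\interleave^2$ whenever $\interleave y\interleave \leq \sigma$, since any additional factors beyond the leading quadratic one can be absorbed into $C(\sigma)$ as repeated $L^{\infty}$ norms bounded by $\sigma$. The elliptic and energy estimates of Section 2 then deliver the analog of Lemma \ref{sigma estimate} essentially unchanged.

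With these two ingredients in place, the Guo--Strauss bootstrap applies to the Duhamel formula
\begin{equation*}
y(t) = \delta\, e^{Lt}y_0 + \int_0^t e^{L(t-s)} N(y(s))\,ds.
\end{equation*}
On $[0,T^*]$ with $T^* := \sup\{t : \interleave y\interleave < \sigma\}$, the linear term grows like $\delta C_P e^{\lambda t}$ while the Duhamel integral is controlled quadratically in $\|y\|$; at $T^\delta = \frac{1}{\lambda}\log\frac{\theta_0}{\delta}$ the linear part has reached order $\theta_0$ whereas the nonlinear correction is still of order $\theta_0^2$, ensuring $\|y(T^\delta)\| \geq \tau_0 > 0$ with $\tau_0$ depending on $C_P$ and $y_0$ but independent of $\delta$.

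The main obstacle, as in Theorem \ref{instable theorem}, is to verify that $T^* > T^\delta$, i.e., that the $H^2$-norm does not exceed the threshold $\sigma$ before the $L^2$-escape time is reached. This requires the elliptic bootstrap to transfer the $L^2$ exponential growth rate up to the $H^2$ level without introducing spurious large constants that would prematurely trigger the supremum, and it is precisely the step where the three-dimensional restriction is essential. The generalization from the specific $(l,r)=(1,1)$ case only raises the polynomial degree of $N(y)$, which is accommodated by the $H^2 \hookrightarrow L^{\infty}$ embedding in $\mathbb{R}^3$; all other estimates are structurally identical to those of Theorem \ref{instable theorem}.
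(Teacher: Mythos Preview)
Your proposal is correct and follows essentially the same route as the paper: the paper's Section 2.2 also identifies the linearized operator with largest eigenvalue $\lambda=\prod_j c_{j,\infty}$ from the $b$-equation, invokes the conservation laws and $\int_\Omega b_0\,dx\neq 0$ to obtain $C_P$, observes that the nonlinear remainder is at least quadratic so that the Sobolev embedding and the elliptic/energy estimates of the $A+2B\rightleftharpoons B+C$ case carry over verbatim, and then defers to the Guo--Strauss bootstrap of Theorem~\ref{instable theorem}. Your write-up is in fact more explicit than the paper's own sketch, but the argument is the same.
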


The above theorems will be proved in Section 2.

\subsection{Stability}

Initially we consider proving the local stability at positive equilibria for simple case $A+2B\rightleftharpoons B+C$ 
by defining the small perturbation $a=\Tilde{a}-a_{\infty}$, $b=\Tilde{b}-b_{\infty}$, $c=\Tilde{c}-c_{\infty}$ where  $(a_{\infty},b_{\infty},c_{\infty})$ is the unique positive equilibrium with  $a_{\infty}b_{\infty}=c_{\infty}$ and compatible with the conservation law.

Therefore we get the following equation for perturbation
\begin{equation}  \label{abc equation}
\begin{split}
 &   a_t-d_a \Delta a = -(b+b_{\infty})(ab-(c-b_{\infty}a-a_{\infty}b))
\\& b_t-d_b\Delta b  = -(b+b_{\infty})(ab-(c-b_{\infty}a-a_{\infty}b))
\\& c_t-d_c\Delta c  = (b+b_{\infty})(ab-(c-b_{\infty}a-a_{\infty}b))
\end{split}
\end{equation}

By multiplying $b_{\infty}a, a_{\infty}b, c$ on \eqref{abc equation} respectively and integrating over $\Omega$ by parts, we get the first part of the energy estimate.

\begin{equation}
\begin{split}
        & \frac{1}{2}\frac{d}{dt}(b_{\infty}\|a\|_{2}^{2}+a_{\infty}\|b\|_{2}^{2}+\|c\|_{2}^{2}) + (d_ab_{\infty}\|\nabla{a}\|_{2}^{2}+d_ba_{\infty}\|\nabla{b}\|_{2}^{2}+d_c\|\nabla{c}\|_{2}^{2})
        \\& = \int_{\Omega}(b+b_{\infty})(ab-(c-b_{\infty}a-a_{\infty}b))(c-b_{\infty}a-a_{\infty}b)dx
\end{split}
\end{equation}

Next step is the most crucial.
We try to absorb the right hand side by the energy-dissipation term $d_ab_{\infty}\|\nabla{a}\|_{2}^{2}+d_ba_{\infty}\|\nabla{b}\|_{2}^{2}+d_c\|\nabla{c}\|_{2}^{2}$. However we can't apply Poincar\'{e} inequality to compare $\|\nabla{a}\|_{2}$ and $\|{a}\|_{2}$ directly since $\int_{\Omega} a \ dx$, $\int_{\Omega} b \ dx$, $\int_{\Omega} c \ dx$ is unknown. Motivated from the conservation law \eqref{conservation law}, we introduce two new variables   
$
d = a + c, e= b+c
$
where $\int_{\Omega} d \ dx = \int_{\Omega} e \ dx = 0$. Now we can apply Poincar\'{e} inequality on $d$ and $e$ to get
\begin{equation*}
 \|d\|_2 \lesssim \|\nabla{d}\|_2, \ \|e\|_2 \lesssim \|\nabla{e}\|_2
\end{equation*}
In this paper, the notation 
$X \lesssim Y$ means that $X \leq CY$ for some
constant $C > 0$.

Then we analyse the sign status for $d$ and $e$ and use the structure of non linearity
along with Poincar\'{e} inequality, here we let $f = c-b_{\infty}a-a_{\infty}b$ to simplify the notation.

If $f \geq 0, ab \leq 0$ or $f \leq 0, ab \geq 0$, the integrand on the right hand side is non-positive, thus 
$$
\int_{\Omega}(b+b_{\infty})(ab-(c-b_{\infty}a-a_{\infty}b))(c-b_{\infty}a-a_{\infty}b)dx \leq 0
$$

If $f < 0, ab < 0$ which implies $a$ and $b$ have different signs, thus
\begin{equation*}
\begin{split}
        & \int_{\Omega}(b+b_{\infty})(ab-(c-b_{\infty}a-a_{\infty}b))(c-b_{\infty}a-a_{\infty}b)dx
        \\& \lesssim \int_{\Omega}(b+b_{\infty})(ab)^2dx  
        = \int_{\Omega}b \cdot (ab)^2dx + \int_{\Omega}b_{\infty}(ab)^2dx
\end{split}
\end{equation*}

If $b \leq 0$, since $|ab| \lesssim (a-b)^2 = (d-e)^2$,
$$
\int_{\Omega}b \cdot (ab)^2dx + \int_{\Omega}b_{\infty}(ab)^2dx \lesssim \int_{\Omega}(d^4+e^4)dx
$$

If $b > 0$ which implies $a < 0$, we have $b < b-a = e-d \lesssim e^2+d^2+1$,
$$
\int_{\Omega}b \cdot (ab)^2dx + \int_{\Omega}b_{\infty}(ab)^2dx \lesssim \int_{\Omega}(e^2+d^2+1)(d^4+e^4)dx
$$

If $f > 0, ab > 0$ and if $f \geq ab$, the integrand on the right hand side is non-positive, thus 
$$
\int_{\Omega}(b+b_{\infty})(ab-(c-b_{\infty}a-a_{\infty}b))(c-b_{\infty}a-a_{\infty}b)dx \leq 0
$$

If $0 < f < ab$ and if $a>0, b>0$, then we can get $c > b_{\infty}a + a_{\infty}b >0$ which implies $d >a$ and $e > b$ and $ab < |d \cdot e| \lesssim d^2 +e^2$, then we have
\begin{equation*}
\begin{split}
        & \int_{\Omega}(b+b_{\infty})(ab-(c-b_{\infty}a-a_{\infty}b))(c-b_{\infty}a-a_{\infty}b)dx 
        \\& \lesssim \int_{\Omega}(b+b_{\infty})(ab)^2dx  
        = \int_{\Omega}b  (ab)^2dx + \int_{\Omega}b_{\infty}(ab)^2dx \lesssim \int_{\Omega}(e^2+1)(d^4+e^4)dx
\end{split}
\end{equation*}

If $0 < f < ab$ and if $a<0, b<0, c \leq 0$, then we can get $d < a < 0$ and $e < b < 0$ and $ab < |d \cdot e| \lesssim d^2 +e^2$, then we have
\begin{equation*}
\begin{split}
        & \int_{\Omega}(b+b_{\infty})(ab-(c-b_{\infty}a-a_{\infty}b))(c-b_{\infty}a-a_{\infty}b)dx 
        \\& \lesssim \int_{\Omega}(b+b_{\infty})(ab)^2dx  
        \lesssim \int_{\Omega}(d^4+e^4)dx
\end{split}
\end{equation*}

If $0 < f < ab$ and if $a<0, b<0, c >0$, then we have 
$$
ab +(b_{\infty}a + a_{\infty}b) > c > 0
$$
but $ab +(b_{\infty}a + a_{\infty}b) = b \cdot (a+ a_{\infty}) +ab_{\infty} < 0$, this is the impossible case.

After considering all above cases, we can get the following
\begin{equation}
\begin{split}
& \frac{1}{2}\frac{d}{dt}(b_{\infty}\|a\|_{2}^{2}+a_{\infty}\|b\|_{2}^{2}+\|c\|_{2}^{2}) + (d_ab_{\infty}\|\nabla{a}\|_{2}^{2}+d_ba_{\infty}\|\nabla{b}\|_{2}^{2}+d_c\|\nabla{c}\|_{2}^{2})
\\& \lesssim   g(\|a,b,c\|_{\infty}) (\|\nabla{d}\|^2_2 + \|\nabla{e}\|^2_2)
\end{split}
\end{equation}
where $g(\|a,b,c\|_{\infty}) = \|(e^2+d^2)(e^2+d^2+1)\|_{\infty}$ 
and provided that $\|a,b,c\|_{L^{\infty}}$ is small enough such that $g \leq \min(d_ab_{\infty},d_ba_{\infty},d_c)$, we have   
\begin{equation} \label{abc first energy estimate}
\frac{d}{dt}(b_{\infty}\|a\|_{2}^{2}+a_{\infty}\|b\|_{2}^{2}+\|c\|_{2}^{2}) + (d_ab_{\infty}\|\nabla a\|_{2}^{2}+d_ba_{\infty}\|\nabla b\|_{2}^{2}+d_c\|\nabla c\|_{2}^{2})
\leq 0
\end{equation}

Then we apply $\partial_t$ on \eqref{abc equation} and multiply them by $b_{\infty}a_t,a_{\infty}b_t,c_t$ respectively, then integrating over $\Omega$ by parts and sum up all three terms, we get
\begin{equation}
\begin{split}
    & \frac{1}{2}\frac{d}{dt}(b_{\infty}\|a_t\|_{2}^{2}+a_{\infty}\|b_t\|_{2}^{2}+\|c_t\|_{2}^{2}) + (d_ab_{\infty}\|\nabla a_t\|_{2}^{2}+d_ba_{\infty}\|\nabla b_t\|_{2}^{2}+d_c\|\nabla c_t\|_{2}^{2})
    \\& = \int_{\Omega}(b_{\infty}a_t+a_{\infty}b_t-c_t)b_t(c-a_{\infty}b-b_{\infty}a-ab) dx 
 + \int_{\Omega}(b_{\infty}a_t+a_{\infty}b_t-c_t)\Tilde{b}(c_t-a_t\Tilde{b}-b_t\Tilde{a}) dx
\end{split}
\end{equation}

Similarly we analyse the sign status for the following variables
$
d_t = a_t + c_t, e_t= b_t+c_t
$
which is also motivated from the conservation law 
and we also get 
\begin{equation*}
\begin{split}
& \int_{\Omega} d_t \ dx = \int_{\Omega} e_t \ dx = 0,
 \|d_t\|_2 \lesssim \|\nabla{d_t}\|_2, \ \|e_t\|_2 \lesssim \|\nabla{e_t}\|_2
\end{split}
\end{equation*}

Considering all possible cases, we are able to show that
if $\|a,b,c\|_{L^{\infty}}$ is small enough, we have the following 
\begin{equation}  \label{abc second energy estimate}
\frac{d}{dt}(b_{\infty}\|a_t\|_{2}^{2}+a_{\infty}\|b_t\|_{2}^{2}+\|c_t\|_{2}^{2}) + (d_ab_{\infty}\|\nabla a_t\|_{2}^{2}+d_ba_{\infty}\|\nabla b_t\|_{2}^{2}+d_c\|\nabla c_t\|_{2}^{2}) \lesssim 0
\end{equation}
Combing energy estimate \eqref{abc first energy estimate}\eqref{abc second energy estimate} with the elliptic estimate (from Theorem \ref{nirenberg} in Section 2)
$$
\|v_i\|_{H_2} \lesssim \|bc-ab^2-2ab b_{\infty}-b^2a_{\infty}\|_2+\sum\limits_{i=1}^{3}\|\partial_t(v_i)\|_2+\sum\limits_{i=1}^{3}\|v_i\|_2
$$
where $v = (a,b,c)$. Then we have the local stability for $a,b,c$ in $H^2$ sense.

In Section 3, we consider the generalized case for one reversible pair
$$
\alpha_1 A_1 + ... +  \alpha_n A_n \rightleftharpoons \beta_1 A_1 + ... +  \beta_n A_n
$$
The corresponding $n \times n$ reaction-diffusion system is
\begin{equation}
\begin{cases} \label{system n}
\partial_t \Tilde{u}_{i}-d_i \Delta \Tilde{u}_{i} = (\beta_i - \alpha_i)(\Tilde{u}^{\alpha} - \Tilde{u}^{\beta}) &  x \in \Omega, t > 0 \\
\nabla{\Tilde{u}}_{i} \cdot n = 0 &  x \in \partial \Omega, t > 0\\
\Tilde{u}_{i} (x,0)=\Tilde{u}_{i, 0}(x) &  x \in \Omega \\
\end{cases}
\end{equation}
where $\alpha = (\alpha_1,...,\alpha_n)$ and $\beta = (\beta_1,...,\beta_n)$ and $\alpha_i, \beta_j$ are non-negative integers. In this case $\mathcal{D}=\mathrm{diag}\{d_i\}\in M_{n\times n}(\Reals)$ denotes the diagonal matrix of diffusion constants.

From \cite{GJCA2019}, as long as $\int_{\Omega} \Tilde{u}_{i, 0}(x) \ dx > 0$ there must exist the unique positive equilibria
and we name it as
$u_{\infty} = ({u}_{1_{\infty}},{u}_{2_{\infty}},...,{u}_{n_{\infty}})$. 
Therefore we have the unique positive equilibrium 
$u_{\infty}$ with  $u_{\infty}^{\alpha} = u_{\infty}^{\beta}$ under the conservation law 
$\forall i \in L:= \{ i \in \{1,...,n\} | \alpha_i > \beta_i \}$ 
, $\forall j \in R := \{ j \in \{1,...,n\} | \alpha_j < \beta_j \}$.
\begin{equation} 
\begin{split}   \label{conservation law on steady state}
(\alpha_j-\beta_j)\int_{\Omega} \Tilde{u}_{i}(t,x) \ dx 
+ (\beta_i-\alpha_i)\int_{\Omega} \Tilde{u}_{j}(t,x) \ dx
= (\alpha_i-\beta_i) {u}_{i_{\infty}} 
+ (\beta_j-\alpha_j) {u}_{j_{\infty}}
:= M_{i,j}.
\end{split}
\end{equation}
We exclude the case when $\int_{\Omega} \Tilde{u}_{i}(t,x) \ dx = 0$, since in this situation the system degenerates to the heat equation and the solution converges to the boundary.

The method to prove local stability for $
\alpha_1 A_1 + ... +  \alpha_n A_n \rightleftharpoons \beta_1 A_1 + ... +  \beta_n A_n
$ is similar as in $A+2B\rightleftharpoons B+C$ case. We show the energy which consists of $L^2$ norm of $u$ and $L^2$ norm of $u_t$ is non-increasing by energy estimate and analysing the sign status for every $u_i$. Then the elliptic estimate can show the local stability for $u_i$ in $H^2$ sense.

In this paper we prove the local stability for the unique positive equilibrium $u_{\infty} = ({u}_{1_{\infty}},{u}_{2_{\infty}},...,{u}_{n_{\infty}})$, namely:

\begin{theorem}   \label{stable theorem in general}
For system \eqref{system n}, there exists small constant $\theta$ such that if the initial perturbation $u(x,0)$ satisfying 
$$
\sum\limits^{n}_{i=1} (\| \partial_t u_i(x,0)\|_2 + \|  u_i(x,0)\|_{\infty}) \leq \theta
$$
, then we have 
$$
\sum\limits^{n}_{i=1} \| u_i(x,t)\|_{H_2}  \lesssim 
e^{-lt} 
$$ 
where $l$ depends explicitly on $\alpha, \beta$ and $\theta$.
\end{theorem}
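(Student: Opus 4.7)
The plan is to follow the three-step strategy already sketched in the introduction for the model case $A+2B\rightleftharpoons B+C$: a weighted $L^2$ energy inequality for the perturbation $u_i=\tilde u_i-u_{i_\infty}$, a matching inequality for its time derivative $u_{i,t}$, and an elliptic regularity bound (Theorem \ref{nirenberg}) upgrading the two $L^2$ controls to an $H^2$ control. The Sobolev embedding $H^2(\Omega)\hookrightarrow L^\infty(\Omega)$ available in dimension three then closes a continuation argument that converts the a priori bound into exponential decay and prevents the $L^\infty$ smallness hypothesis from being violated along the way.

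First I would expand around the equilibrium. Setting $\tilde u_i=u_{i_\infty}+u_i$ and using $u_\infty^\alpha=u_\infty^\beta$, Taylor expansion gives
\begin{equation*}
\tilde u^\alpha-\tilde u^\beta=u_\infty^\alpha\sum_{k=1}^{n}\frac{\alpha_k-\beta_k}{u_{k_\infty}}u_k+\mathcal N(u),
\end{equation*}
where $\mathcal N(u)$ is polynomial in $u$ and at least quadratic. Multiplying the $i$-th equation in \eqref{system n} by $w_iu_i$ with positive weights $w_i=1/u_{i_\infty}$, integrating by parts, and summing over $i$ produces the dissipation $\sum_iw_id_i\|\nabla u_i\|_2^2$, together with a reaction term whose linear piece equals $-u_\infty^\alpha\int_\Omega\bigl(\sum_k\tfrac{\alpha_k-\beta_k}{u_{k_\infty}}u_k\bigr)^2\,dx\le 0$. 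To absorb $\mathcal N(u)$ I would use the conservation laws \eqref{conservation law on steady state}: for every pair $(i,j)\in L\times R$ the combination $z_{ij}:=(\alpha_j-\beta_j)u_i+(\beta_i-\alpha_i)u_j$ has zero mean on $\Omega$, so Poincar\'e yields $\|z_{ij}\|_2\lesssim\|\nabla z_{ij}\|_2$. A case analysis on the signs of $\tilde u^\alpha-\tilde u^\beta$ and of each $u_k$, analogous to the one already carried out for $(a,b,c)$, should reduce every problematic term to $g(\|u\|_\infty)\sum_{i,j}\|\nabla z_{ij}\|_2^2$ with $g(s)\to 0$ as $s\to 0$; choosing $\theta$ small then delivers
\begin{equation*}
\frac{d}{dt}\sum_iw_i\|u_i\|_2^2+c_0\sum_iw_id_i\|\nabla u_i\|_2^2\le 0.
\end{equation*}

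Differentiating \eqref{system n} in time, testing against $w_iu_{i,t}$, and repeating the same sign analysis should yield the analogous bound with $u_i$ replaced by $u_{i,t}$; the new ingredient is that $\partial_t\mathcal N(u)$ produces terms of the shape (polynomial in $u$)$\cdot u_t$, which $\|u\|_\infty$-smallness again makes absorbable into $\|\nabla u_{i,t}\|_2^2$. Combining these two energy inequalities with the elliptic bound $\sum_i\|u_i\|_{H^2}\lesssim\sum_i(\|u_{i,t}\|_2+\|u_i\|_2+\|\tilde u^\alpha-\tilde u^\beta\|_2)$ and controlling the reaction in $L^2$ by $(1+\|u\|_\infty)\|u\|_2$ produces $\sum_i\|u_i\|_{H^2}\lesssim\sum_i(\|u_i\|_2+\|u_{i,t}\|_2)$. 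A standard continuation argument then closes the loop: set $T^{\star}:=\sup\{t:\sum_i\|u_i(t)\|_\infty\le\varepsilon_0\}$, use $H^2\hookrightarrow L^\infty$ in $d=3$ together with the two energy decays and Poincar\'e on the $z_{ij}$ to conclude $\sum_i(\|u_i\|_2^2+\|u_{i,t}\|_2^2)\lesssim e^{-2lt}$ on $[0,T^{\star})$; this prevents the threshold from being reached, so $T^{\star}=\infty$ and the desired $H^2$-decay follows.

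I expect the main obstacle to be exactly the sign analysis that handles the nonlinear terms. In the $(a,b,c)$ case the introduction already enumerates several sub-cases, and for general $\alpha,\beta$ the number of sign configurations grows rapidly both with $n$ and with the total degree of $u^\alpha-u^\beta$. Designing a sufficiently flexible family of zero-mean combinations $\{z_{ij}\}$ that jointly dominates $\mathcal N(u)$ in $L^2$, uniformly in $\alpha,\beta$, is where the combinatorial work will concentrate; once that absorption step is in place, the two energy estimates, the elliptic regularity bound, and the continuation argument are relatively standard.
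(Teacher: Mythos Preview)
Your outline matches the paper through the two weighted $L^2$ energy inequalities (for $u$ and for $u_t$), the sign/case analysis that absorbs the nonlinearity into the diffusion via Poincar\'e on the zero-mean combinations $z_{ij}$, and the use of Theorem~\ref{nirenberg} to upgrade $L^2$ control to $H^2$ control together with a continuation argument. Where your proposal breaks down is the step in which you claim exponential decay: from
\[
\frac{d}{dt}\sum_i w_i\|u_i\|_2^2 + c_0\sum_i w_i d_i\|\nabla u_i\|_2^2 \le 0
\]
you infer $\sum_i\|u_i\|_2^2\lesssim e^{-2lt}$ ``via Poincar\'e on the $z_{ij}$.'' But Poincar\'e on the $z_{ij}$ only gives $\sum_i\|\nabla u_i\|_2^2\gtrsim\sum_{i,j}\|z_{ij}\|_2^2$, and the linear map $u\mapsto(z_{ij})_{i\in L,j\in R}$ has a nontrivial kernel, namely all functions of the form $\mu(x)\,(\beta-\alpha)$. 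Since the individual means $\int_\Omega u_i\,dx$ are \emph{not} conserved (they evolve according to $(\beta_i-\alpha_i)\int_\Omega(\tilde u^\alpha-\tilde u^\beta)\,dx$), this kernel component is genuinely present and is not damped by the diffusion dissipation; your inequality therefore only yields monotonicity of the energy, not a rate. The same obstruction applies to your $u_t$ estimate.

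The paper faces the identical issue and does not resolve it internally. Its energy lemmas (Lemmas~\ref{first right hand side estimate} and \ref{second right hand side estimate}) give only $\frac{d}{dt}E\le 0$, which is enough to run the continuation argument and conclude global-in-time $L^\infty$ boundedness. At that point the paper \emph{imports} exponential $L^1$ convergence from \cite{PSU17} (which requires exactly this boundedness), interpolates to $L^p$, and uses that external rate to bound $|\int_\Omega\partial_t u_i\,dx|\lesssim e^{-lt}$. Only then does Poincar\'e on $\partial_t u_i-\overline{\partial_t u_i}$ combine with the dissipation to give, via Gronwall, exponential decay of $\|\partial_t u_i\|_2$, and the elliptic estimate finishes. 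So the missing ingredient in your plan is precisely a mechanism that damps the component along $\beta-\alpha$; the paper outsources this to the entropy method of \cite{PSU17}, and you would need either to cite that result as well or to retain the linear reaction dissipation $-\int_\Omega\bigl(\sum_k\frac{\alpha_k-\beta_k}{u_{k_\infty}}u_k\bigr)^2dx$ and prove a genuine spectral-gap/coercivity inequality combining it with the diffusion.
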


The above theorem will be proved in Section 3.

\section{Instability of boundary equilibria}

\subsection{Instability for $A+2B \rightleftharpoons B+C$ }

Since we want to show the instability at the boundary equilibrium $( a_{\infty} ,0,c_{\infty})$, we introduce three new variables as perturbation around the boundary equilibria.
\begin{equation}
a = \Tilde{a} - a_{\infty}, b = b, c = \Tilde{c} - c_{\infty}, u =(a,b,c)^{\intercal}
\end{equation}
Thus we have the conservation law for $(a,b,c)$;
\begin{equation}
\begin{split}  \label{conservation law for a,b,c}
& \int_{\Omega} a(t,x) \ dx+\int_{\Omega} c(t,x) \ dx = 0, 
 \int_{\Omega} b(t,x) \ dx+\int_{\Omega} c(t,x) \ dx = 0
\end{split}
\end{equation}
%\textbf{Step 1:} 
Note that
\begin{equation*}
\begin{split}
- \Tilde{a}\Tilde{b}^2+\Tilde{b}\Tilde{c} 
& = -(a+a_{\infty})b^2+b(c+c_{\infty})
\\& = b c_{\infty} + (bc - (a+a_{\infty})b^2)
\end{split}
\end{equation*}
Therefore we get the equations for $u$;
\begin{equation}
\begin{cases}     \label{equation for u}
a_t - d_a \Delta a = b {c}_{\infty} + (bc - (a+a_{\infty})b^2) &  x \in \Omega, t > 0 \\
b_t - d_b \Delta b  
= b {c}_{\infty} + (bc - (a+a_{\infty})b^2) &  x \in \Omega, t > 0 \\
c_t - d_c \Delta c 
= - b {c}_{\infty} - (bc - (a+a_{\infty})b^2)  &  x \in \Omega, t > 0 \\
\frac{\partial a}{\partial \mathbf{n}}= \frac{\partial b}{\partial \mathbf{n}}=\frac{\partial c}{\partial \mathbf{n}} =0 &  x \in \partial \Omega, t > 0\\
a(x,0)=a_0(x),b(x,0)=b_0(x),c(x,0)=c_0(x) &  x \in \Omega \\
\end{cases}
\end{equation}
It is convenient to express \eqref{equation for u} as
\begin{equation}  \label{equation for a and b and c}
u_t = L_1 u + N_1 (u),
\end{equation}
where $L_1 \coloneqq  
\left(
\begin{matrix}
   d_a \Delta  & c_{\infty} & 0 \\
   0 & d_b \Delta  + c_{\infty} & 0 \\
   0 & -c_{\infty} & d_c \Delta 
\end{matrix}
\right) $ and 
$N_1 (u) \coloneqq 
\begin{pmatrix}
    bc - (a+a_{\infty})b^2 \\
    bc - (a+a_{\infty})b^2 \\
    - bc + (a+a_{\infty})b^2 \\
\end{pmatrix}.
$

We cite Theorem 1.1 and Theorem 1.2 in Section 11.3 in \cite{Strauss.1992}.
For open and bounded the domain $\Omega$ with sufficient smooth boundary and the Neumann boundary condition, we denote the eigenvalues by  $\lambda_j$ and the eigenfunctions by $v_j(x)$. Thus
\begin{equation*}
\begin{cases}
- \Delta v_j(x) = \lambda_j v_j(x) &  x \in \Omega  \\
\frac{\partial v_j (x)}{\partial \mathbf{n}} = 0 &  x \in \partial  \Omega  \\
\end{cases}
\end{equation*}
Then we can number them in ascending order,
$$
0 = \lambda_1 < \lambda_2 \leq \lambda_3 \leq ...
$$
The first eigenfunction $v_1 (x)$ is a constant and the eigenfunctions forming a basis are complete in the $L_2$ sense.

Therefore the largest eigenvalue for Laplace operator is zero with the corresponding eigen-function is the constant function.

\begin{lemma}  \label{lemma1}
For the linear partial differential equations 
\begin{equation*}
\begin{cases}     
u_t = L_1 u &  x \in \Omega, t > 0 \\
\frac{\partial u}{\partial \mathbf{n}} =0 &  x \in \partial \Omega, t > 0,\\
\end{cases}
\end{equation*}
%the largest real part of eigenvalues of $L_1$ is $c_{\infty} > 0$  with the corresponding eiganvector $(\mathbf{1_{\Omega}},\mathbf{1_{\Omega}},-\mathbf{1_{\Omega}})^{\intercal}$ which implies
we have the following estimate 
$$
\|e^{t L_1} u_0 \|_{2} \leq 3 e^{ {c}_{\infty} t} \| u_0 \|_{2}
$$
\end{lemma}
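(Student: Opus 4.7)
The plan is to exploit the upper triangular structure of $L_1$: reading off the system
\begin{align*}
a_t &= d_a\Delta a + c_\infty b,\\
b_t &= d_b\Delta b + c_\infty b,\\
c_t &= d_c\Delta c - c_\infty b,
\end{align*}
one sees that the $b$-equation is autonomous in $b$, while the $a$- and $c$-equations are inhomogeneous Neumann heat equations driven only by $b$, with no cross-coupling between $a$ and $c$. So I would solve the three components in sequence and then combine their $L^2$ estimates.

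First, for the $b$-equation I would make the substitution $\tilde b(t,x) := e^{-c_\infty t} b(t,x)$, which satisfies the pure heat equation $\tilde b_t = d_b \Delta \tilde b$ with Neumann boundary condition. The standard energy identity $\tfrac{1}{2}\tfrac{d}{dt}\|\tilde b\|_2^2 = -d_b\|\nabla \tilde b\|_2^2$ (obtained by multiplying by $\tilde b$, integrating by parts, and using the Neumann condition to kill the boundary term) shows that the Neumann heat semigroup is an $L^2$-contraction, so $\|\tilde b(t)\|_2 \leq \|b_0\|_2$ and hence $\|b(t)\|_2 \leq e^{c_\infty t}\|b_0\|_2$. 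Next, treating $a$ via Duhamel's formula,
$$a(t) = e^{td_a\Delta} a_0 + c_\infty \int_0^t e^{(t-s)d_a\Delta} b(s)\,ds,$$
and applying the same $L^2$-contractivity of $e^{\tau d_a\Delta}$ to each term, I obtain
$$\|a(t)\|_2 \leq \|a_0\|_2 + c_\infty\int_0^t e^{c_\infty s}\|b_0\|_2\,ds = \|a_0\|_2 + (e^{c_\infty t}-1)\|b_0\|_2.$$
An identical argument for $c$ yields the same bound with $\|c_0\|_2$ in place of $\|a_0\|_2$.

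Since $\|a_0\|_2,\|b_0\|_2,\|c_0\|_2 \leq \|u_0\|_2$, each of $\|a(t)\|_2$, $\|b(t)\|_2$, $\|c(t)\|_2$ is then bounded by $e^{c_\infty t}\|u_0\|_2$, and the triangle inequality $\|u\|_2 \leq \|a\|_2 + \|b\|_2 + \|c\|_2$ (valid since $\sqrt{x^2+y^2+z^2}\leq x+y+z$ for nonnegative $x,y,z$) produces the advertised factor $3$. At this purely linear level there is no serious obstacle beyond bookkeeping; the only essential analytic input is $L^2$-contractivity of the Neumann heat semigroup, and the key structural observation is that the constant $c_\infty$ appearing both in the reactive term for $b$ and in the forcing of $a$ and $c$ fits together so that all three components inherit the same growth rate $e^{c_\infty t}$.
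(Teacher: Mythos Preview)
Your argument is correct and follows the same overall strategy as the paper: exploit the triangular structure of $L_1$, bound $\|b(t)\|_2$ first, then feed that bound into the forced heat equations for $a$ and $c$. The technical tools you use differ slightly from the paper's. For the $b$-equation the paper expands $b_0$ in the Neumann Laplacian eigenbasis and observes that each mode grows at most like $e^{c_\infty t}$; your substitution $\tilde b = e^{-c_\infty t} b$ reducing to the pure heat equation is an equivalent but more semigroup-flavored route. For $a$ and $c$ the paper uses the direct energy identity $\tfrac{1}{2}\tfrac{d}{dt}\|a\|_2^2 \leq c_\infty \|a\|_2\|b\|_2$ (multiply by $a$ and integrate by parts), arriving at $\tfrac{d}{dt}\|a\|_2 \leq c_\infty e^{c_\infty t}\|b_0\|_2$, whereas you apply Duhamel plus contractivity of $e^{\tau d_a\Delta}$. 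Both routes yield the identical bound $\|a(t)\|_2 \leq \|a_0\|_2 + (e^{c_\infty t}-1)\|b_0\|_2$, and the final combination into the factor $3$ is the same.
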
 
  
\begin{proof}
To get the eigenvalues $\lambda$ for $(d_b \Delta  + c_{\infty})b$ such that 
$$
(d_b \Delta  + c_{\infty})b = \lambda b \implies d_b \Delta b = (\lambda - c_{\infty}) b
$$
Since the largest eigenvalue for Laplace operator is zero, we have
$\lambda \leq c_{\infty}$.
And because the eigenfunctions forming a basis are complete in the $L_2$ sense, we can write initial data $b_0 \in L^2$ as 
$
b_0(x) = \sum\limits_{j} b_j v_j(x)   
$
in the $L_2$ sense and we get
$$
b(t,x) = e^{t L_1} b_0 = \sum\limits_{j} b_j e^{\lambda_j t} v_j(x)   
$$
also in the $L_2$ sense, therefore
\begin{equation*}
\begin{split}
 \| b(t,x) \|_{2} 
& = \| \sum\limits_{j} b_j e^{\lambda_j t} v_j(x) \|_2
 = \sum\limits_{j}  \|  e^{\lambda_j t} b_j v_j(x) \|_2
\\& \leq \sum\limits_{j}  \|  e^{c_{\infty} t} b_j v_j(x) \|_2 = e^{c_{\infty} t} \| \sum\limits_{j} b_j v_j(x) \|_2 = e^{c_{\infty} t} \| b_0 \|_{2}
\end{split}
\end{equation*}
Then for $a_t = d_a \Delta a + c_{\infty}b$, $c_t = d_c \Delta c - c_{\infty}b$, we multiply $a$ and $c$, integrate over domain $\Omega$ respectively and we get
\begin{equation*}
\begin{split}
& \frac{1}{2}\frac{d}{dt} \|a\|_{2}^{2} 
 = \int_{\Omega} d_a \Delta a \ a \ dx + \int_{\Omega} c_{\infty} b a \ dx
\\& = - \int_{\Omega} d_a | \nabla a |^{2}  dx + \int_{\Omega} c_{\infty} b a \ dx
 \leq c_{\infty} \|a\|_{2} \|b\|_{2} 
 \end{split}
\end{equation*}
\begin{equation*}
\begin{split}
\\&  \frac{1}{2}\frac{d}{dt} \|c\|_{2}^{2} 
  = \int_{\Omega} d_c \Delta c \ c \ dx - \int_{\Omega} c_{\infty} b c \ dx
\\& = - \int_{\Omega} d_c | \nabla c |^{2}  dx - \int_{\Omega} c_{\infty} b c \ dx
 \leq c_{\infty} \|c\|_{2} \|b\|_{2}
\end{split}
\end{equation*}
which implies
\begin{equation*}
\begin{split}
& \frac{d}{dt} \|a\|_{2} \leq c_{\infty} \|b\|_{2} \leq c_{\infty} e^{c_{\infty} t} \| b_0 \|_{2},
, \ \frac{d}{dt} \|c\|_{2} \leq c_{\infty} \|b\|_{2} \leq c_{\infty} e^{c_{\infty} t} \| b_0 \|_{2}.
 \end{split}
\end{equation*}
Therefore we have 
\begin{equation}
\begin{split}
& \|a(t,x) \|_{2} \leq \| a_0 \|_{2} + e^{c_{\infty} t} \| b_0 \|_{2}
, \ \|c(t,x) \|_{2} \leq \| c_0 \|_{2} + e^{c_{\infty} t} \| b_0 \|_{2}
 \end{split}
\end{equation}

\end{proof}

In order to use the elliptic estimate, we also need the following variables
$$
a_t = \Tilde{a}_t , b_t = \Tilde{b}_t, c_t = \Tilde{c}_t , u_t =(a_t,b_t,c_t)^{\intercal}
$$
Taking the time derivative on \eqref{equation for a and b and c}, we get
\begin{equation}
\begin{split}  \label{equation for a_t,b_t,c_t}
& u_{tt} = L_{2} u_t + N_{2}(u, u_t)
\end{split}
\end{equation}
where 
$N_{2}(u, u_t) \coloneqq \partial_t [N_1 (u)]
$ 
and $L_2 =  L_1$.
%, similarly we have the following estimate for $u_t$,
%$$
%\|e^{t L_{2}} u_t \|_{2} \leq e^{\Tilde{c}_{\infty}t} \|u_t\|_{2}
%$$
%the equality holds when $u_t = (\mathbf{1_{\Omega}},\mathbf{1_{\Omega}},-\mathbf{1_{\Omega}})^{\intercal}$.

Now we define $y^{\intercal} = (u^{\intercal}, u^{\intercal}_t)$ and get the equation for $y$,
\begin{equation}
\begin{split}  \label{equation for y}
y_t 
= L y + N(y)
\end{split}
\end{equation}
where $L = \left(
\begin{matrix}
   L_1 & 0 \\
   0 & L_{2} 
\end{matrix}
\right) 
\ \text{and} \
N(y) = \begin{pmatrix}
    N_1 (u) \\
    N_{2} (u, u_t) \\
\end{pmatrix}$.
%Also we introduce two norms 
%$$
%\|y\| \coloneqq \|u\|_2+\|u_t\|_2 \ \text{and} \  \interleave y \interleave \coloneqq \|u\|_{H^2}+\|u_t\|_2
%$$

Considering Lemma \ref{lemma1} and $L$ is block diagonal matrix, we can get 
\begin{equation*}
\begin{split}
 \|e^{t L} y \| 
& = \|e^{t L_1} u \|_2 + \|e^{t L_2} u_t \|_2
\\& \leq 3 e^{{c}_{\infty}t} \|u\|_2 + 3 e^{{c}_{\infty}t} \|u_t \|_2 = 3 e^{{c}_{\infty}t} \|y \|
\end{split}
\end{equation*}
Therefore we have
\begin{equation}   \label{L estimate}
\|e^{t L}  \| = \sup_{\| y \| \leq \textcolor{blue}{1}} \frac{\|e^{t L} y \|}{ \| y \|}     
\leq 3 e^{{c}_{\infty} t}
\end{equation}
\\
In order to get the elliptic estimate, we cite the \text{Theorem 10.5} in \cite{ADN.1964}.

\textbf{Supplementary Condition on $L$.} $L(P,\Xi)$ is of even degree $2m$ (with respect to $\Xi$). For evev pair linearly independent real vectors $\Xi, \Xi^{\prime}$,, the polynomial $L(P,\Xi+ \tau\Xi^{\prime})$ in the complex variable $\tau$ has exactly $m$ roots with positive imaginary part.

In this condition, $P$ represents the points on the boundary $\partial \Omega$ of $\Omega$ with
$\Xi$ a tangent, and $\Xi^{\prime}$ the normal to $\partial \Omega$, at $P$.

\textbf{Complementing Boundary Condition.} For any $P \in \partial \Omega$
and any real, non-zero vector $\Xi$ tangent to $\partial \Omega$ at $P$, let us regard $M^{+}(P, \Xi, \tau) = \prod\limits_{h=1}^{m=3} (\tau - \tau_{h}^{+}(P,\Xi))$ where $\tau_{h}^{+}(P,\Xi)$ with $h=1,2,3$ are the $m$ roots (in $\tau$) with positive imaginary part of the characteristic equation $L(P,\Xi+ \tau\Vec{n}) = 0$.
and the elements of the matrix
$$
\sum\limits_{j=1}^{N}B^{\prime}_{hj}(P,\Xi+ \tau\Vec{n}) L^{jk}(P,\Xi+ \tau\Vec{n})
$$
as polynomials in the indeterminate $\tau$ where
$L^{jk}(P,\Xi+ \tau\Vec{n})$ is the matrix adjoint to $(l^{\prime}_{ij}(P,\Xi+ \tau\Vec{n}))$.
The definition of $l^{\prime}_{ij}$ will be shown in the following Theorem.
The rows of the latter matrix are required to be linearly independent modulo $M^{+}(P, \Xi, \tau)$, i.e.,
$$
\sum\limits_{h=1}^{m}C_h  \sum\limits_{j=1}^{N}B^{\prime}_{hj}L^{jk} \equiv 0 \ (\text{mod} \ M^{+})
$$
only if the constants $C_h$ are all zero.

\begin{theorem}  \label{nirenberg}
For the elliptic systems of partial differential equations  
$$
\sum\limits_{j=1}^{N} l_{ij}(P, \partial) u_j(P) = F_i(P), \ i=1,...,N
$$
where the $l_{ij}(P, \partial)$, linear differential operators, are polynomials in $\partial = \{\partial_{x_1},...,\partial_{x_{n+1}} \}$ with coefficients depending on $P$ over some domain 
$\Omega$ in $x_1,..., x_{n+1}$-space. The orders of these operators are assumed to depend on two systems of
integer weights, $s_1,..., s_N$ and $t_1,..., t_N$, attached to the equations and to the
unknowns, respectively, $s_i$ corresponding to the $i$-th equation and $t_j$ to the $j$-th dependent variable $u_j$.
The manner of the dependence is expressed by the inequality
$$
\deg l_{ij}(P, \Xi) \leq s_i + t_j   \   i,j = 1,...,N
$$
“deg” referring of course to the degree in $\Xi$.

If $L = \det (l^{\prime}_{ij}(P))$ where $l^{\prime}_{ij} (P)$ consists of the terms in $l_{ij}(P)$ which are just of the order
$s_i + t_j$ (the leading part with the highest order)
satisfies the supplementary condition and the boundary conditions are complementing  
$$
\sum\limits_{j=1}^{N} B_{hj}(P, \partial) u_j(P) = \phi_h(P) \ on \ \partial \Omega, \ h=1,...,m
$$
in terms of given polynomials in $\Xi$, $B_{hj}(P, \Xi)$, with complex coefficients depending
on $P$ with $m = \frac{1}{2} \deg(L(P)) > 0$. The orders of the boundary operators depend on two systems of integer weights, in this case the system $t_1,..., t_N$,
already attached to the dependent variables and a new system $r_1,..., r_m$ of which
$r_h$ pertains to the $h$-th boundary condition. The exact dependence is that expressed by the inequality
$$
\deg B_{hj}(P, \Xi) \leq r_h + t_j   \   h=1,...,m,  j = 1,...,N
$$
A constant K exists such that, if $\| u_j \|_{l_1 + t_j}$ for j=1,...,N, 
then 
for a given integer $l \geq l_1$,
$\| u_j \|_{l + t_j}$ also is finite, and
$$
\| u_j \|_{l + t_j} \leq K( \sum\limits_{i} \| F_i \|_{l - s_i} + \sum\limits_{h} \| \phi_h \|_{l - r_h - 1/p} + \sum\limits_{j} \| u_j \|_{0})
$$
where $\| \cdot \|_j = \| \cdot \|_{H_j}$
and K is dependent on the domain and the modulus
of continuity of the leading coeflcients in the $l_{ij}$.

\end{theorem}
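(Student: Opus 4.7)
The plan is to establish the a priori bound in three stages: a constant-coefficient interior model on $\mathbb{R}^{n+1}$, a constant-coefficient half-space model with the boundary operators $B_{hj}$, and then a globalisation via partition of unity with frozen coefficients on $\Omega$.

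For the interior model I would Fourier-transform the principal system $\sum_j l'_{ij}(\partial) u_j = F_i$. The supplementary (ellipticity) condition forces $L(\Xi) = \det(l'_{ij}(\Xi)) \neq 0$ for $\Xi \neq 0$, and the adjugate representation $u_j = L^{-1} \sum_k L^{jk} F_k$ realises each derivative $\partial^{\gamma} u_j$ with $|\gamma| \leq l + t_j$ as a Calder\'on--Zygmund singular integral operator applied to derivatives of order $l - s_i$ of the $F_i$; the weight inequalities $\deg l_{ij} \leq s_i + t_j$ are exactly what makes the symbol homogeneities align. The standard $L^p$ theory for such operators then gives the interior version of the inequality, modulo lower-order terms that will be absorbed into $\sum_j \|u_j\|_0$ through interpolation.

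For the half-space model, I would Fourier-transform only in the $n$ tangential variables, reducing the system to an ODE in the normal variable with the tangential frequency $\Xi'$ as a parameter. The $m$ roots $\tau_h^+$ with positive imaginary part generate the space of bounded (decaying) solutions, and the complementing boundary condition is precisely the algebraic statement that $(B'_{hj})$ restricted to the span of these decaying modes is invertible modulo $M^+(P,\Xi',\tau)$. This invertibility produces an explicit Poisson-type kernel recovering $u_j$ from the $\phi_h$, together with a particular solution driven by $F_i$; bounding the kernel via another application of singular integral theory yields the half-space version of the estimate with the correct pairing of $r_h$ against $t_j$.

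To globalise, cover $\overline{\Omega}$ by charts so small that the leading coefficients $l'_{ij}$ and $B'_{hj}$ vary only slightly (this is where the modulus-of-continuity hypothesis on the leading part enters), straighten the boundary in each chart by a smooth diffeomorphism, and apply the two model estimates to $\chi_{\nu} u_j$ for a subordinate partition of unity $\{\chi_{\nu}\}$. Commutators $[\chi_{\nu}, l'_{ij}]$ and lower-order coefficient terms are of order strictly less than $s_i + t_j$ and so can be controlled by interpolation and absorbed into the final $\sum_j \|u_j\|_0$ term. The step I expect to be the main obstacle is the half-space model: one must verify that the complementing condition produces tangential Fourier multipliers satisfying Mihlin-type symbol bounds uniformly in $\Xi'$, including the delicate behaviour as $|\Xi'| \to 0$ where the decaying-modes basis degenerates, while carefully tracking the mismatched weights $(s_i, t_j, r_h)$ that dictate which derivative belongs to which Sobolev space in the final inequality.
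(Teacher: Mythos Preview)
Your sketch is a faithful outline of the classical Agmon--Douglis--Nirenberg argument (Fourier solution of the constant-coefficient interior model, tangential Fourier transform and the complementing condition for the half-space model, then freezing coefficients and patching via partition of unity), and nothing in it is wrong as a plan. However, the paper does not attempt to prove this theorem at all: it is quoted verbatim as Theorem~10.5 of \cite{ADN.1964} and used as a black box, so there is no ``paper's own proof'' to compare against. In that sense your proposal goes well beyond what the paper requires; the paper's only work related to this statement is the subsequent lemma, where it verifies that the specific operator $L_1$ with Neumann boundary data satisfies the supplementary and complementing conditions so that the cited estimate applies.
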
   

From the above Theorem \ref{nirenberg}, we get the following lemma.
\begin{lemma}
For the system 
$
u_t = L_1 u + N_1 (u)
$ in \eqref{equation for a and b and c} with Neumann boundary condition $\frac{\partial u}{\partial \mathbf{n}}|_{\partial \Omega} =0 $, we have the following elliptic estimate
$$
\|u_i\|_{H_2} \lesssim \|N_1 (u)\|_2 + \|u\|_2 + \|u_t\|_2.
$$
\end{lemma}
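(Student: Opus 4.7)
The plan is to invoke the Agmon--Douglis--Nirenberg elliptic regularity theorem (Theorem \ref{nirenberg}) by freezing the time variable and treating \eqref{equation for a and b and c} as an elliptic system with $u_t$ moved into the source. After this rearrangement the system reads
\[
-\mathcal{D}\Delta u + \widetilde{L}u = -u_t + N_1(u)\quad\text{in}\ \Omega, \qquad \frac{\partial u_i}{\partial\mathbf{n}}=0\ \text{on}\ \partial\Omega,
\]
where $\widetilde{L}$ collects the bounded zeroth-order linear terms coming from the $\pm c_{\infty} b$ contributions in $-L_1$. The top-order part is block-diagonal, so one is essentially facing three scalar Neumann Poisson problems coupled only through lower-order terms.

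I select the ADN weights $s_i=0$, $t_j=2$, $r_h=1$, so that $\deg l_{ij}\le s_i+t_j=2$ and $\deg B_{hj}\le r_h+t_j=3$. The leading part is $l'_{ij}(P,\Xi)=d_i\delta_{ij}|\Xi|^2$, hence $L(P,\Xi)=\det(l'_{ij})=d_a d_b d_c\,|\Xi|^6$ has even degree $2m=6$. For any pair of linearly independent real vectors $\Xi,\Xi'$,
\[
L(P,\Xi+\tau\Xi') = d_a d_b d_c\,\bigl(|\Xi|^2+2\tau\,\Xi\cdot\Xi'+\tau^2|\Xi'|^2\bigr)^3
\]
has (by strict Cauchy--Schwarz) a triple root with positive imaginary part, giving exactly $m=3$ roots in the upper half-plane; the supplementary condition is satisfied. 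At a boundary point with tangent $\Xi$ and unit normal $\mathbf{n}$ one gets $M^{+}(P,\Xi,\tau)=(\tau-i|\Xi|)^3$. Because $l'_{ij}$ is diagonal, the adjoint matrix $L^{jk}$ is diagonal with entries proportional to $(|\Xi|^2+\tau^2)^2$, and the Neumann symbol $B'_{hj}=i\delta_{hj}\tau$ (as $\Xi\cdot\mathbf{n}=0$) makes $\sum_{j}B'_{hj}L^{jk}$ diagonal with entries proportional to $\tau(|\Xi|^2+\tau^2)^2$; linear independence of its rows modulo $M^{+}$ then reduces to the classical scalar Neumann complementing condition for $-\Delta$, which is well known.

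Applying Theorem \ref{nirenberg} with $l=0$ (so $l+t_j=2$ and $l-s_i=0$) and homogeneous boundary data $\phi_h\equiv 0$ yields
\[
\|u_j\|_{H^2}\leq K\Bigl(\sum_{i}\|F_i\|_{L^2}+\sum_{j}\|u_j\|_{L^2}\Bigr), \qquad F_i=-u_{i,t}+(\widetilde{L}u)_i+N_1(u)_i.
\]
Since $\widetilde{L}$ is a bounded linear (zeroth-order) operator, $\|\widetilde{L}u\|_2\lesssim\|u\|_2$, and absorbing these coupling terms into $\|u\|_2$ gives
\[
\|u_i\|_{H^2}\lesssim\|N_1(u)\|_2+\|u\|_2+\|u_t\|_2,
\]
which is the claimed elliptic estimate.

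The only delicate point is the verification of the two algebraic hypotheses of the ADN theorem, but because the principal symbol is block-diagonal with each scalar factor equal to $-d_i\Delta$ and the boundary conditions are scalar Neumann conditions applied componentwise, both the supplementary and complementing conditions reduce to the classical scalar situation for $-\Delta$ where they are standard. The remaining point is the a priori $L^2$ regularity $\|u_j\|_{L^2}<\infty$ needed to activate Theorem \ref{nirenberg}, which is ensured by the energy framework in play throughout Section 2.
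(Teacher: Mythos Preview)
Your proof follows essentially the same route as the paper: freeze time, move $u_t$ to the source, and invoke the Agmon--Douglis--Nirenberg estimate (Theorem~\ref{nirenberg}) after checking the supplementary and complementing conditions for the diagonal principal symbol $d_i|\Xi|^2$ with componentwise Neumann data. The one slip is your choice $r_h=1$: since the Neumann operator has order $1$ and $t_j=2$, you need $r_h=-1$ so that $r_h+t_j=1$ and the principal boundary symbol $B'_{hj}$ is the nonzero first-order term you actually compute (with $r_h=1$ the degree-$3$ part $B'_{hj}$ would vanish and the complementing condition would fail); the paper sets $r_h=-1$ and then takes $l=l_1=\max(0,r_h+1)=0$, which is exactly what your computation tacitly assumes.
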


\begin{proof}

We first need to check whether the system satisfies the conditions in  Theorem \ref{nirenberg}.
Now we rewrite the system \eqref{equation for a and b and c} by putting $u_t$ to the right side.

We set $s_i = 0, t_j = 2$ with $1 \leq i,j \leq 3$. Therefore we get 
$$
L(P,\Xi) = \det (l^{\prime}_{ij}(P,\Xi)) = d_ad_bd_c(\xi_1^2+\xi_2^2+\xi_3^2)^3
$$
where $\Xi = (\xi_1,\xi_2,\xi_3)$.
It's obvious to see $L \neq 0$ for real $\Xi \neq 0$ which implies it is the elliptic system.
Next we check the supplement condition on operator $L$. $L(P,\Xi)$ is of the even degree $2m$ with $m=3$. Then for every pair of linearly independent real vectors $\Xi, \Xi^{\prime}$, we have 
$$
L(P,\Xi+ \tau\Xi^{\prime}) = d_ad_bd_c((\xi_1+\tau\xi_1^{\prime})^2+(\xi_2+\tau\xi_2^{\prime})^2+(\xi_3+\tau\xi_3^{\prime})^2)^3
$$
The above polynomial has exactly $m=3$ roots with positive imaginary roots since any real number can't be the root because of the linear independence and symmetric of the polynomial. We can also pick sufficient large $A$ such that
$$
A^{-1}|\Xi|^{2m} \leq |L(P,\Xi)| \leq A|\Xi|^{2m}
$$
to show the system is uniform elliptic.

Next we need to check whether Neumann boundary condition is complementing .
Since we have Neumann boundary condition which means 
$$
(n_1 \cdot \partial_{1} + n_2 \cdot \partial_{2} + n_3 \cdot \partial_{3})v_i = 0 \ \text{for} \ i=1,2,3.
$$
Then we set $r_h = -1$ with $h=1,2,3$. 

Here we set $\Xi$ be any tangent to $\partial \Omega$ and $P \in \partial \Omega$.
Therefore $B^{\prime}_{hj}(P,\Xi) = n_1 \cdot \xi_{1} + n_2 \cdot \xi + n_3 \cdot \xi_{3}$ if $h=j$.
Since we know $L(P,\Xi+ \tau\Vec{n})=0$ has three roots with positive imaginary part $\tau_{h}^{+}(P,\Xi)$ with $h=1,2,3$. We set 
$$
M^{+}(P,\Xi,\tau) = \prod\limits_{h=1}^{m=3} (\tau - \tau_{h}^{+}(P,\Xi))
$$
And let $(L^{jk}(P,\Xi+ \tau\Vec{n}))$ denote the matrix ad-joint to $(l^{\prime}_{ij}(P,\Xi+ \tau\Vec{n}))$. Then we have $(L^{jk}(P,\Xi+ \tau\Vec{n})) = \mathrm{diag}\{d_b d_c,d_a d_c,d_a d_b\} \cdot (\xi_1^2+\xi_2^2+\xi_3^2)^2$ which is also a diagonal matrix.
Thus we get
\begin{equation*}
\begin{split}
& \sum\limits_{h=1}^{m=3}C_h  \sum\limits_{j=1}^{N=3}B^{\prime}_{hj}L^{jk}(P,\Xi+ \tau\Vec{n}) \ \text{for} \ k=1,2,3
\\& = \Tilde{C_k} (n_1 \cdot (\xi_{1}+\tau n_1) + n_2 \cdot (\xi+\tau n_2) + n_3 \cdot (\xi_{3}+\tau n_3))
\\&
((\xi_{1}+\tau n_1)^2+(\xi_{2}+\tau n_2)^2+(\xi_{3}+\tau n_3)^2)^2 \equiv 0 \ (\text{mod} \ M^{+})
\end{split} 
\end{equation*}
\\
Only if $\Vec{n} \parallel (\xi_{1}+\tau n_1,\xi_{2}+\tau n_2,\xi_{3}+\tau n_3)$ or $\{C_k\}$ are all zero. It's obvious to see that Neumann boundary conditions satisfy the complementing boundary condition.
Then Theorem \ref{nirenberg} shows that with $l_1 = \max(0,r_h+1) = 0$, if $\|u_i\|_{H_2}$ are all finite, pick $l = l_1$, then for $i=1,2,3$, we have
\begin{equation} \label{elliptic estimate for instability}
\|u_i\|_{H_2} \leq K (\| N_u (u) \|_2
+ \sum\limits_{i=1}^{3}\|\partial_t u_i\|_2
+ \sum\limits_{i=1}^{3}\|u_i\|_2)
\end{equation}
where $K$ is a constant depends on origin equation and bounded domain.
\end{proof}

Now we we start proving our main theorem, Theorem \ref{instable theorem}.
First we show the existence of $y_0$ and the corresponding constant $C_P$.

\begin{lemma}
If $\| y_0 \| = 1$, $\int_{\Omega} b_0 dx \neq 0$ $(b_0 \geq 0)$ and $\interleave  y_0 \interleave < \infty$, there exists $C_p > 0$ such that , there exists $C_p > 0$ such that 
$$
\|e^{tL} y_0\| \geq C_P e^{c_{\infty} t}
$$
\end{lemma}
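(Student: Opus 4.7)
The plan is to exploit the decoupling of the $b$-component inside $L_1$: reading off the middle row of the matrix, the $b$-equation is $b_t = d_b \Delta b + c_\infty b$, which is completely independent of $a$ and $c$. Under the homogeneous Neumann boundary condition, I would expand $b_0$ in the orthonormal basis $\{v_j\}$ of Neumann Laplacian eigenfunctions from the cited Strauss result, writing $b_0 = \sum_{j \geq 1} \beta_j v_j$, and use the crucial fact that the principal eigenvalue is $\lambda_1 = 0$ with normalized eigenfunction $v_1 \equiv |\Omega|^{-1/2}$. Then $e^{t(d_b \Delta + c_\infty)} v_j = e^{(c_\infty - d_b \lambda_j) t} v_j$, so the $j = 1$ mode grows at the full rate $e^{c_\infty t}$, while every higher mode carries an additional damping factor $e^{-d_b \lambda_j t}$ with $\lambda_j > 0$.

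By orthonormality of $\{v_j\}$ in $L^2(\Omega)$,
\[
\|b(t)\|_2^2 \;=\; \sum_{j \geq 1} |\beta_j|^2 \, e^{2(c_\infty - d_b \lambda_j)t} \;\geq\; |\beta_1|^2 \, e^{2 c_\infty t},
\]
and the Fourier coefficient $\beta_1 = \langle b_0, v_1 \rangle = |\Omega|^{-1/2} \int_\Omega b_0\, dx$ is strictly positive by the hypotheses $b_0 \geq 0$ and $\int_\Omega b_0\, dx \neq 0$. Since the norm is defined componentwise as $\|y\| = \|u\|_2 + \|u_t\|_2$ and trivially $\|u\|_2 \geq \|b\|_2$, I obtain
\[
\|e^{tL} y_0\| \;\geq\; \|e^{tL_1} u_0\|_2 \;\geq\; \|b(t)\|_2 \;\geq\; |\beta_1|\, e^{c_\infty t},
\]
which gives the lemma with the explicit constant $C_P := |\Omega|^{-1/2} \int_\Omega b_0\, dx > 0$, depending only on $y_0$.

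There is no real obstacle in the argument: the $b$-equation is a pure shifted heat equation and the sign/integral conditions on $b_0$ are precisely tailored to guarantee a nonzero projection onto the principal (constant) mode, which is exactly the mode responsible for the instability rate $c_\infty$. The only step that deserves a cautious second look is the convention for $\|u\|_2$; from the proof of Lemma \ref{lemma1} this is the sum of the $L^2$ norms of the three components, but under any equivalent vector $L^2$ norm one still has $\|u\|_2 \geq c \|b\|_2$ for some $c > 0$, which can simply be absorbed into $C_P$. As a by-product, this lemma shows that the upper estimate \eqref{L estimate} is sharp in its exponential rate, thereby identifying $\lambda = c_\infty$ as the unstable growth rate that enters the definition of $T^\delta$ in Theorem \ref{instable theorem}.
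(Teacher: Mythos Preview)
Your argument is correct and rests on the same mechanism as the paper's proof: both isolate the projection of $b$ onto the constant (principal Neumann) mode, which under $e^{tL_1}$ grows exactly like $e^{c_\infty t}$. The paper phrases this by integrating the linear equations over $\Omega$ (which is precisely projection onto $v_1$, since $|\Omega|=1$) rather than writing out the full spectral expansion, and then goes further by also tracking the spatial means of $a$, $c$, $a_t$, $b_t$, $c_t$, exploiting the conservation identities $\int_\Omega a_0 = \int_\Omega b_0 = -\int_\Omega c_0$ to show that all six means grow at the same rate. This yields the larger explicit constant $C_P = 3(c_\infty+1)\bar{b_0}$ instead of your $C_P = \bar{b_0}$. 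Since the lemma only asserts existence of some $C_P>0$, your streamlined version---using only the decoupled $b$-component and the trivial bound $\|y\|\geq\|b\|_2$---already suffices and is arguably cleaner; the paper's extra bookkeeping buys a sharper constant but is not needed for the statement as written.
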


\begin{proof}     

In our case, the conservation law and $\int_{\Omega} b_0 dx \neq 0$ imply
\begin{equation}  \label{initial intgration}
\int_{\Omega} b_0(x) dx = \int_{\Omega} a_0(x) dx = - \int_{\Omega} c_0(x) dx > 0
\end{equation}
Taking the integration over the domain $\Omega$ on first linear part $u_t = L_1 u $, we get 
\begin{equation}
 \frac{d}{dt} \int_{\Omega} b(t,x) dx = c_{\infty}  \int_{\Omega} b(t,x) dx
\end{equation}
this implies 
\begin{equation}
\int_{\Omega} b(t,x) dx = e^{c_{\infty} t} \int_{\Omega} b_0(x) dx
\end{equation}
Similarly from $u_t = L_1 u $, we get equations for $a$ and $c$
\begin{equation}   \label{a and c time diff}
 \frac{d}{dt} \int_{\Omega} a(t,x) dx = c_{\infty}  \int_{\Omega} b(t,x) dx,
\ \frac{d}{dt} \int_{\Omega} c(t,x) dx = - c_{\infty}  \int_{\Omega} b(t,x) dx
\end{equation}
From \eqref{initial intgration} and \eqref{a and c time diff}, we have
\begin{equation}
  \int_{\Omega} a(t,x) dx = e^{c_{\infty} t} \int_{\Omega} b_0(x) dx 
, \  \int_{\Omega} c(t,x) dx = - e^{c_{\infty} t} \int_{\Omega} b_0(x) dx
\end{equation}
this implies 
\begin{equation} \label{first part initial estimate}
\|e^{t L_1} u_0\|_2 \geq 3 \bar{b_0} e^{c_{\infty} t}
\end{equation}
where $\bar{b_0} := \int_{\Omega} b_0 dx$.
Again by the conservation law, the second part $u_{tt} = L_{2} u_t$ shows 
\begin{equation}
  \int_{\Omega} b_t(t,x) dx = \frac{d}{dt} \int_{\Omega} b(t,x) dx = c_{\infty} e^{c_{\infty} t} \int_{\Omega} b_0(x) dx 
\end{equation}
Also by the conservation law \eqref{conservation law for a,b,c} and \eqref{a and c time diff}, we have
\begin{equation}
\begin{split}
&  \int_{\Omega} a_t(t,x) dx = \frac{d}{dt} \int_{\Omega} a(t,x) dx = c_{\infty} e^{c_{\infty} t} \int_{\Omega} b_0(x) dx 
\\& \int_{\Omega} c_t(t,x) dx = \frac{d}{dt} \int_{\Omega} c(t,x) dx = - c_{\infty} e^{c_{\infty} t} \int_{\Omega} b_0(x) dx 
\end{split}
\end{equation}
this again implies,
\begin{equation} \label{second part initial estimate}
\|e^{t L_2} u_t \|_2 \geq 3 c_{\infty} \bar{b_0} e^{c_{\infty} t}
\end{equation}
%where $\bar{b_0} := \int_{\Omega} b_0 dx$.
From \eqref{first part initial estimate} and \eqref{second part initial estimate}, we can have the following
$$
\|e^{tL} y_0\| \geq C_P e^{c_{\infty} t}
$$
where $C_P = 3 (c_{\infty}+1) \bar{b_0}$.
\end{proof}

Then we do the estimate on the non-linear part $N(y)$ in the norm of
$\| \cdot \|$.

\begin{lemma}
$$     \label{N estimate}
\|N(y)\| \lesssim \interleave y \interleave^2 + \interleave y \interleave^3 
$$
\end{lemma}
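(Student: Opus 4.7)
The plan is to expand the definition of $N(y)$ and estimate each monomial in the reaction term by Hölder's inequality, then promote $L^\infty$ factors to $H^2$ factors via the Sobolev embedding $H^2(\Omega) \hookrightarrow L^\infty(\Omega)$ available in three spatial dimensions. Since $\|N(y)\| = \|N_1(u)\|_2 + \|N_2(u,u_t)\|_2$, I will handle the two blocks separately.

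For the first block, $N_1(u)$ is (up to sign) the scalar $bc - (a+a_\infty)b^2 = bc - a_\infty b^2 - ab^2$. I would estimate the three kinds of monomials as follows. For the mixed quadratic piece, $\|bc\|_2 \leq \|b\|_\infty \|c\|_2 \lesssim \|b\|_{H^2}\|c\|_{H^2}$; for the linear--times--quadratic piece $\|a_\infty b^2\|_2 \lesssim \|b\|_\infty \|b\|_2 \lesssim \|b\|_{H^2}^2$; and for the cubic piece $\|ab^2\|_2 \leq \|a\|_\infty \|b\|_\infty \|b\|_2 \lesssim \|a\|_{H^2}\|b\|_{H^2}^2$. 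Summing and using $\|u\|_{H^2} \leq \interleave y \interleave$ gives $\|N_1(u)\|_2 \lesssim \interleave y \interleave^2 + \interleave y \interleave^3$.

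For the second block, I differentiate $N_1$ to write $N_2(u,u_t) = \pm\bigl(b_t c + b c_t - a_t b^2 - 2(a+a_\infty) b b_t\bigr)$. The guiding principle is to keep the single $u_t$ factor in $L^2$ (bounded by $\|u_t\|_2 \leq \interleave y \interleave$) and put every $u$ factor in $L^\infty$, which costs one $\|u\|_{H^2} \lesssim \interleave y \interleave$ by Sobolev embedding. Concretely, $\|b_t c\|_2 \leq \|c\|_\infty\|b_t\|_2 \lesssim \|c\|_{H^2}\|b_t\|_2$ and similarly for $\|b c_t\|_2$; $\|a_t b^2\|_2 \leq \|b\|_\infty^2 \|a_t\|_2 \lesssim \|b\|_{H^2}^2\|a_t\|_2$; $\|a b b_t\|_2 \leq \|a\|_\infty\|b\|_\infty\|b_t\|_2 \lesssim \|a\|_{H^2}\|b\|_{H^2}\|b_t\|_2$; and $\|a_\infty b b_t\|_2 \lesssim \|b\|_{H^2}\|b_t\|_2$. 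Collecting these bounds yields $\|N_2(u,u_t)\|_2 \lesssim \interleave y \interleave^2 + \interleave y \interleave^3$ as well.

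There is no genuine obstacle here; the only place where dimension enters in an essential way is the Sobolev embedding $H^2(\Omega)\hookrightarrow L^\infty(\Omega)$, which is why (as emphasized in the introduction) the whole argument is confined to $d \leq 3$. In higher dimensions $H^2$ no longer controls the $L^\infty$ norm and the cubic and mixed terms would have to be estimated by interpolation or extra regularity, which is outside the scope of this framework.
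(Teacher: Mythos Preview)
Your proof is correct and follows essentially the same route as the paper: split $N_1$ and $N_2$ into their monomial terms, apply H\"older's inequality to put one factor (always a $u$-factor) in $L^\infty$ and the remaining factor in $L^2$, and then use the Sobolev embedding $H^2(\Omega)\hookrightarrow L^\infty(\Omega)$ in dimension $d\le 3$ to convert $L^\infty$ bounds into $H^2$ bounds. The only cosmetic difference is that the paper tracks the explicit constants $C_{SI}$ and the factor $3$ coming from the three components of $N_1,N_2$, whereas you absorb these into the $\lesssim$ notation.
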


\begin{proof}
\begin{equation}
\begin{split}
 \|N(y)\| 
&= \|N_1 (u)\|_2 + \|N_{2} (u, u_t)\|_2 
\\&= 3\|bc - (a+a_{\infty})b^2\|_2 + 3\|(bc - (a+a_{\infty})b^2)_t\|_2  
\end{split}
\end{equation}
By using Sobolev embedding inequality 
$
\| y \|_{\infty} \leq C_{SI} \cdot \| y \|_{H_2}
$, we can control the right hand side by norm $\interleave  \cdot \interleave $.
For the $N_1 (u)$ part,
\begin{equation*}
\begin{split} 
& \|bc - (a+a_{\infty})b^2\|_2 
\leq \|bc\|_2 + \|ab^2\|_2 + \| \Tilde{a}_{\infty}b^2 \|_2
\\& \leq \|b\|_{\infty} (\|c\|_2 + \| \Tilde{a}_{\infty}b \|_2 + \|b\|_{\infty} \|a\|_2 )
\\& \leq C_{SI}(1+ a_{\infty}) \interleave y \interleave^2 + C^{2}_{SI} \interleave y \interleave^3
\end{split}
\end{equation*}
For the $N_2 (u, u_t)$ part,
\begin{equation*}
\begin{split} 
& \|(bc - (a+a_{\infty})b^2)_t\|_2 
\leq \|(bc)_t\|_2 + \|(ab^2)_t\|_2 + \| (\Tilde{a}_{\infty}b^2)_t \|_2
\\& \leq (\|b\|_{\infty} \|c_t\|_2  + \|c\|_{\infty} \|b_t\|_2)
+ (\|b\|^2_{\infty} \|a_t\|_2 + 2\|a\|_{\infty}\|b\|_{\infty} \|b_t\|_2) + 2\Tilde{a}_{\infty}\|b\|_{\infty} \| b_t \|_2
\\& \leq 2 C_{SI}(1+ a_{\infty}) \interleave y \interleave^2 + 3 C^{2}_{SI} \interleave y \interleave^3
\end{split}
\end{equation*}
Combining the above two parts, we get the following 
\begin{equation}  
\|N(y)\| \leq C_N ( \interleave y \interleave^2 + \interleave y \interleave^3 )
\end{equation}
for all $y$ and $\interleave y \interleave \leq \infty$ and constant  
$C_N = \max{\{ 9 C_{SI}(1+ a_{\infty}), 12 C^{2}_{SI}} \}$.

\end{proof}

%\textbf{Step 3:} 
Next we do the estimate on $u$ and $u_t$ in the norm of $\interleave  \cdot \interleave$. 

\begin{lemma} \label{sigma estimate}
Suppose $\interleave y \interleave < \sigma$ 
and $\sigma$ is sufficiently small
with 
$\|a, b, c\|_{\infty} < C_{SI} \cdot \sigma$
such that
$\|b\|_{\infty} (1 + \| a\|_{\infty} + a_{\infty}) < \min(c_{\infty}, 1)$ and $ \|b\|_{\infty} + \|b\|^2_{\infty} + \|c\|_{\infty} + 2(\|a\|_{\infty}+a_{\infty}) \|b\|_{\infty} < \min(c_{\infty}, 1) $, we have the following estimate 
$$
\interleave y \interleave^2 \lesssim \int^{t}_{0} \|y\|^2 \ ds + \|y_0\|^2
$$
\end{lemma}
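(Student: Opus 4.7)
The plan is to combine two $L^2$-in-time energy estimates (for $u$ and for $u_t$) with the elliptic regularity bound \eqref{elliptic estimate for instability}; the two explicit smallness hypotheses on $\|(a,b,c)\|_\infty$ are calibrated precisely so that the unfavourable $c_\infty$-linear contributions from $L_1$, $L_2$ and the quadratic contributions from $N_1$, $N_2$ can all be dominated by a constant multiple of $\|u\|_2^2 + \|u_t\|_2^2$, while the $H^2$ upgrade rides on the fact that $\|N_1(u)\|_2 \lesssim \sigma\, \interleave y \interleave$ under the hypothesis $\interleave y \interleave<\sigma$.

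For the first estimate I would multiply the three components of $u_t = L_1 u + N_1(u)$ by $a$, $b$, $c$ respectively, integrate over $\Omega$, and use the Neumann boundary condition. The Laplacian pieces produce the nonnegative dissipation $d_a\|\nabla a\|_2^2 + d_b\|\nabla b\|_2^2 + d_c\|\nabla c\|_2^2$, which may simply be discarded; the mixed linear contribution $c_\infty\int b(a+b-c)\,dx$ is bounded by $C\|u\|_2^2$ through Cauchy--Schwarz, and $\int N_1(u)\cdot(a,b,c)\,dx$ is majorised by $\|b\|_\infty(1+\|a\|_\infty + a_\infty)\|u\|_2^2$, which the first smallness hypothesis makes smaller than $\min(c_\infty,1)\|u\|_2^2$. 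The net result is $\tfrac{d}{dt}\|u\|_2^2 \lesssim \|u\|_2^2$, and integrating in time yields $\|u(t)\|_2^2 \lesssim \|u_0\|_2^2 + \int_0^t \|u\|_2^2 \,ds$. Repeating the argument on the differentiated system $u_{tt} = L_2 u_t + N_2(u,u_t)$ tested against $u_t$, and noting that $N_2 = b_t c + b c_t - a_t b^2 - 2(a+a_\infty)b\,b_t$ so that $|\int N_2 \cdot u_t| \lesssim (\|b\|_\infty + \|b\|_\infty^2 + \|c\|_\infty + 2(\|a\|_\infty + a_\infty)\|b\|_\infty)\|u_t\|_2^2$ --- exactly the quantity controlled by the second smallness hypothesis --- produces $\|u_t(t)\|_2^2 \lesssim \|u_t(0)\|_2^2 + \int_0^t (\|u\|_2^2 + \|u_t\|_2^2)\,ds$. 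Summing the two bounds gives $\|u(t)\|_2^2 + \|u_t(t)\|_2^2 \lesssim \|y_0\|^2 + \int_0^t \|y\|^2\,ds$.

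To upgrade this from an $L^2$ bound on $u$ to an $H^2$ bound I invoke the elliptic estimate \eqref{elliptic estimate for instability}, namely $\|u\|_{H^2} \lesssim \|N_1(u)\|_2 + \|u\|_2 + \|u_t\|_2$. From the earlier nonlinear bound $\|N(y)\| \lesssim \interleave y \interleave^2 + \interleave y \interleave^3$ together with $\interleave y \interleave<\sigma$, one has $\|N_1(u)\|_2 \lesssim \sigma\, \interleave y \interleave$. Squaring the elliptic inequality and adding $\|u_t\|_2^2$ to both sides yields
\[
\interleave y \interleave^2 \lesssim \sigma^2 \interleave y \interleave^2 + \|u\|_2^2 + \|u_t\|_2^2,
\]
and choosing $\sigma$ small enough to absorb the $\sigma^2 \interleave y \interleave^2$ term gives $\interleave y \interleave^2 \lesssim \|u\|_2^2 + \|u_t\|_2^2$. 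Combined with the time-integrated bound from the previous paragraph and the equivalence $\|y\|^2$ and $\|u\|_2^2+\|u_t\|_2^2$ (up to a factor of $2$), this produces the stated conclusion. The main technical obstacle is getting both the $u$- and $u_t$-energy estimates to close simultaneously under a single choice of $\sigma$: this is precisely what the two displayed smallness conditions in the hypothesis encode (the first controlling $\int N_1\cdot u$, the second controlling $\int N_2 \cdot u_t$), and once they are in force the remaining manipulations are routine Grönwall-style bookkeeping.
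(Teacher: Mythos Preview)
Your proposal is correct and follows essentially the same route as the paper: energy estimates on $u$ and $u_t$ (the paper's \eqref{estimate for u} and \eqref{estimate for u_t}) combined with the elliptic estimate \eqref{elliptic estimate for instability} to upgrade to $H^2$. The only cosmetic difference is that the paper bounds $\|N_1(u)\|_2$ directly by $\|b\|_\infty(1+\|a\|_\infty+a_\infty)\|u\|_2 \lesssim \|u\|_2$ rather than by $\sigma\,\interleave y\interleave$, so it reaches $\interleave y\interleave^2 \lesssim \|u\|_2^2+\|u_t\|_2^2$ without your absorption step; both variants are equivalent.
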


\begin{proof}
Given $\interleave y \interleave < \sigma$ is sufficiently small, we have the smallness of $\|a, b, c\|_{\infty} < C_{SI} \cdot \sigma$ by Sobolev embedding inequality.
Recall the equations \eqref{equation for u}, we multiply $a, b, c$ respectively and do the integral over the domain $\Omega$ and get 
\begin{equation}
\begin{split}  \label{estimate for u}
& \frac{1}{2} \frac{d}{dt}(\|a\|^{2}_{2} + \|b\|^{2}_{2} + \|c\|^{2}_{2}) + (d_a \| \nabla a\|^{2}_{2} + d_b \| \nabla b\|^{2}_{2} + d_c \| \nabla c\|^{2}_{2})
\\& = \int_{\Omega} (a+b-c) b\Tilde{c}_{\infty} + (a+b-c) (bc - (a+a_{\infty})b^2) \ dx
\\& \leq c_{\infty} \|b\|_{2}(\|a\|_{2} + \|b\|_{2} + \|c\|_{2}) + \int_{\Omega} (a+b-c) (bc - (a+a_{\infty})b^2) \ dx
\\& \leq c_{\infty} \|u\|_{2}^2 + \|b\|_{\infty} (1 + \| a\|_{\infty} + a_{\infty}) \|u\|_{2}^2 \lesssim \|u\|_{2}^2
\end{split} 
\end{equation}
Next on the equations 
\eqref{equation for a_t,b_t,c_t}, we multiply $a_t, b_t, c_t$ respectively and do the integral over the domain $\Omega$ again and get 
\begin{equation*}
\begin{split} 
& \frac{1}{2} \frac{d}{dt}(\|a_t\|^{2}_{2} + \|b_t\|^{2}_{2} + \|c_t\|^{2}_{2}) + (d_a \| \nabla a_t\|^{2}_{2} + d_b \| \nabla b_t\|^{2}_{2} + d_c \| \nabla c_t\|^{2}_{2})
\\& = \int_{\Omega} (a_t + b_t - c_t) b_t c_{\infty} + (a_t +b_t -c_t)(bc - (a+a_{\infty})b^2)_t \ dx
\\& = \int_{\Omega} (a_t + b_t - c_t) b_t c_{\infty} + (a_t +b_t -c_t)(b_t c + b c_t- a_tb^2 
- 2(a+a_{\infty})bb_t) \ dx
\\& \leq c_{\infty} \|b_t \|_{2}(\|a_t\|_{2} + \|b_t\|_{2} + \|c_t\|_{2}) + (\|a_t\|_{2} + \|b_t\|_{2} + \|c_t\|_{2}) (\|b_t\|_{2} \|c\|_{\infty} \\& + \|c_t\|_{2} \|b\|_{\infty} + \|a_t\|_{2} \|b\|^2_{\infty} + 2(\|a\|_{\infty}+a_{\infty}) \|b\|_{\infty} \|b_t\|_{2})
\\& \leq C_{u_t} (\|a_t\|_{2} + \|b_t\|_{2} + \|c_t\|_{2})^2
\end{split} 
\end{equation*}
where constant $C_{u_t} = 3 [ c_{\infty} + \|b\|_{\infty} + \|b\|^2_{\infty} + \|c\|_{\infty} + 2(\|a\|_{\infty}+a_{\infty}) \|b\|_{\infty}]$.

Then we get
\begin{equation}
\begin{split}  \label{estimate for u_t}
\frac{d}{dt}(\|a_t\|^{2}_{2} + \|b_t\|^{2}_{2} + \|c_t\|^{2}_{2})
\leq 2 C_{u_t} (\|a_t\|_{2} + \|b_t\|_{2} + \|c_t\|_{2})^2
\end{split} 
\end{equation}

Recall the elliptic estimate \eqref{elliptic estimate for instability}
$$
\|u_i\|_{H_2} \lesssim \|bc - (a+a_{\infty})b^2\|_2 + \|u\|_2+\|u_t\|_2, u = (a,b,c)^{\intercal}
$$
Combining this with \eqref{estimate for u}, \eqref{estimate for u_t}, we get the $H_2$ estimate for $u$ which is the first part of $\interleave \cdot \interleave$ norm
\begin{equation}
\begin{split}  \label{H2 estimate}
& \|u_i\|^2_{H_2} \lesssim \|bc - (a+a_{\infty})b^2\|^2_2 + \|u\|^2_2+\|u_t\|^2_2 
\\& \leq \|b\|^2_{\infty}(1 + \|a\|_{\infty} + a_{\infty})^2 \|u\|^2_2 + \|u\|^2_2 + \|u_t\|^2_2  
\lesssim  \|u\|^2_2 + \|u_t\|^2_2
\\& \lesssim \int^{t}_{0} \|b\|_{2} (\|a\|_{2} + \|b\|_{2} + \|c\|_{2}) \ ds + \|a_0\|^{2}_{2} + \|b_0\|^{2}_{2} + \|c_0\|^{2}_{2}
\\& + \int^{t}_{0} (\|a_t\|_{2} + \|b_t\|_{2} + \|c_t\|_{2})^2 \ ds + \|a_t(0)\|^{2}_{2} + \|b_t(0)\|^{2}_{2} + \|c_t(0)\|^{2}_{2}
\\& \lesssim \int^{t}_{0} \|y\|^2 \ ds + \|y_0\|^2
\end{split} 
\end{equation}
where the above inequalities hold because $\interleave y \interleave \leq \sigma$.
Again from \eqref{estimate for u_t}, we get the $L_2$ estimate for $u_t$ which is the second part of $\interleave \cdot \interleave$ norm
\begin{equation}
\begin{split}  \label{u_t L_2 estimate}
& \|u_t\|^2_2 \lesssim \|a_t\|^{2}_{2} + \|b_t\|^{2}_{2} + \|c_t\|^{2}_{2}
\\& \leq \int^{t}_{0} (\|a_t\|_{2} + \|b_t\|_{2} + \|c_t\|_{2})^2 \ ds + \|a_t(0)\|^{2}_{2} + \|b_t(0)\|^{2}_{2} + \|c_t(0)\|^{2}_{2}
\\& \lesssim \int^{t}_{0} \|y\|^2 \ ds + \|y_0\|^2
\end{split} 
\end{equation}
\end{proof}

Finally, we proof Theorem \ref{instable theorem} with all above lemma. The proof is based on the argument of \cite{Guo-Strauss}.
\begin{proof}

Now we denote 
\begin{equation*}
\begin{split}
& T^{\delta} = \frac{1}{c_{\infty}} \log \frac{\theta_0}{\delta} 
\\& T^{*} = \sup_{t} \{\interleave y \interleave < \sigma \}
\\& T^{**} = \sup_{t} \{\|y\| \leq 2 \delta e^{c_{\infty} t} \|y_0\| \}
\end{split}
\end{equation*}
For $t \leq \min \{T^{\delta}, T^{*}, T^{**} \}$, we can get from \eqref{H2 estimate} and \eqref{u_t L_2 estimate}, and consider a family of initial data $y^{\delta}(0) = \delta y_0$ with $\| y_0 \| = 1$ and $\interleave y_0 \interleave < \infty$,
$$  
\interleave y \interleave^2 \lesssim \int^{t}_{0} \|y\|^2 \ ds + \delta^2 \|y_0\|^2
 \lesssim \|y_0\|^2 ( \delta^2 e^{2\Tilde{c}_{\infty} t} + \delta^2)
$$
which implies 
\begin{equation}
\begin{split}   \label{interleave norm estimate}
& \interleave y \interleave \lesssim \|y_0\| ( \delta e^{c_{\infty} t} + \delta ) \lesssim \delta e^{c_{\infty} t}
\end{split} 
\end{equation}
Then there exists the constant $C_1$ such that
$$
\interleave y \interleave \leq C_1 \delta e^{c_{\infty} t}
$$
Appling the Duhamel principle to $y_t = L y + N(y)$, we have 
\begin{equation}
\begin{split}  
 \|y(t) - \delta e^{ L t} y_0\| 
& = \|\int^{t}_{0} e^{L(t-\tau)}N(y(\tau)) \ d\tau \|
\\& \lesssim \int^{t}_{0} e^{ c_{\infty} (t-\tau)} \|N(y(\tau))\| \ d\tau 
\\& \lesssim \int^{t}_{0} e^{ c_{\infty} (t-\tau)} (\interleave y  \interleave^2 + \interleave y \interleave^3) \ d\tau
\\& \lesssim \int^{t}_{0} e^{ c_{\infty} (t-\tau)} (\delta^2 e^{2\Tilde{c}_{\infty} \textcolor{blue}{\tau}} + \delta^3 e^{3\Tilde{c}_{\infty} \textcolor{blue}{\tau}}) \ d\tau 
\\& \lesssim \delta^2 e^{2\Tilde{c}_{\infty} t} + \delta^3 e^{3\Tilde{c}_{\infty} t}
\end{split}
\end{equation}
where the first inequality holds by \eqref{L estimate}, the second inequality holds by \text{by Lemma \ref{N estimate}} and the third inequality holds by \eqref{interleave norm estimate}.
\\
Then there exists the constant $C_2$ such that
$$
\|y(t) - \delta e^{ L t} y_0\|  \leq C_2 (\delta^2 e^{2\Tilde{c}_{\infty} t} + \delta^3 e^{3\Tilde{c}_{\infty} t})
$$
In order to find the escape time, it suffices to show that 
$$
{ \min \left\{T^{\delta}, T^{*}, T^{* *}\right\}=T^{\delta}} 
$$
by fixing  $\theta_{0}$ small enough. Set
$$
\theta_{0} = \min \{\frac{\sigma}{C_1}, \frac{1}{2 C_2}, \frac{C_p}{4}, \sqrt{\frac{C_p}{4}}\}
$$
On the one hand, if $ T^{*} < T^{\delta} $ is the smallest, then  for $ 0 \leq t \leq T^{*} $,
\begin{equation*}
\interleave y(T^{*}) \interleave
\leq C_1 \delta e^{c_{\infty} T^{*}}
< C_1 \delta e^{c_{\infty} T^{\delta}} = C_1 \theta_0
< \sigma
\end{equation*}
which is a contradiction to the definition of $T^{*}$. On the other hand, if $T^{**} < T^{\delta}$ is the
smallest, then we have 
\begin{equation*}
\begin{split}
\| y(T^{**}) \|
& \leq \delta e^{ c_{\infty} T^{**}} \| y_0 \|
+ C_2 (\delta^2 e^{2\Tilde{c}_{\infty} T^{**}} + \delta^3 e^{3\Tilde{c}_{\infty} T^{**}})
\\& < \delta e^{ c_{\infty} T^{**}}
+ C_2 (\delta e^{c_{\infty} T^{**}} \theta_0 + \delta e^{c_{\infty} T^{**}} \theta^2_0)
< 2 \delta e^{c_{\infty} t}
\end{split}
\end{equation*}
which is a contradiction to the definition of $T^{**}$.

% From \textbf{Guo's paper}, we know 
Moreover, if there exists a constant $C_p$ such that 
$$
\|e^{tL} y_0\| \geq C_p e^{c_{\infty} t},
$$ 
then at the escape time $t = T^{\delta}$, we have the following estimate
$$
\| \delta e^{L T^{\delta}} y_0\| \geq C_p \delta e^{c_{\infty} T^{\delta}} = C_p \theta_0,
$$
where the non-linear term is
$$
\delta^2 e^{2\Tilde{c}_{\infty} T^{\delta}} + \delta^3 e^{3\Tilde{c}_{\infty} T^{\delta}} = \theta^2_0 + \theta^3_0
$$
then 
$$
\|y(T^{\delta})\| \geq \tau_0 = \frac{1}{2} C_p \theta_0 > 0
$$
which depends explicitly on 
%\textcolor{red}{$C_L, C_N, C_\sigma,$} 
$\sigma$, $C_p, c_{\infty}, y_0$ and is independent of $\delta$.

\end{proof}

Therefore we conclude the local instability for $\delta y_0$
as long as $\| y_0 \| = 1$, $\int_{\Omega} b_0 dx \neq 0$ and $\interleave  y_0 \interleave < \infty$ and sufficient small $\delta$.

\begin{remark}
If the initial data $\int_{\Omega} b_0 dx = 0$, this means $b \equiv 0, \forall x \in \Omega,t >0$  and $R(u) \equiv 0, \forall x \in \Omega,t >0$ which implies the equations for $a$ and $c$ coincide with the heat equation. Therefore, in this case the system will converge to the accessible boundary equilibria $( a_{\infty} ,0,c_{\infty})$.
\end{remark}

\subsection{Instability in generalized case}

Here we indicate how to adapt the above analysis to get instability result for the following generalized case
$$
A_1 + ... + A_l +2B \rightleftharpoons B + C_1 + ... + C_r
$$
The corresponding reaction-diffusion system is

\begin{equation}
\begin{cases} 
\partial_t \Tilde{a_i} - d_i \Delta \Tilde{a_i}= - \Tilde{b}^2 \prod \Tilde{a_i}+\Tilde{b}  \prod \Tilde{c_j} & i=1,...,l, x \in \Omega, t > 0 \\
\partial_t \Tilde{b} - d_b \Delta \Tilde{b}=- \Tilde{b}^2 \prod \Tilde{a_i} +  \Tilde{b}  \prod \Tilde{c_j}  &  x \in \Omega, t > 0 \\
\partial_t \Tilde{c_j} - d_j \Delta \Tilde{c_j}= \Tilde{b}^2 \prod \Tilde{a_i} -  \Tilde{b}  \prod \Tilde{c_j}  &  j=1,...,r, x \in \Omega, t > 0 \\
\nabla{\Tilde{a}_i} \cdot n=\nabla{\Tilde{b}} \cdot n=\nabla{\Tilde{c}_j} \cdot n=0 &  x \in \partial \Omega, t > 0\\
\Tilde{a_i}(x,0)=\Tilde{a}_{i,0}(x),\Tilde{b}(x,0)=\Tilde{b}_0(x),\Tilde{c_j}(x,0)=\Tilde{c}_{j,0}(x) &  x \in \Omega \\
\end{cases}
\end{equation}
\\
For this reaction system, we have the following conservation laws;
\begin{equation}
\begin{split}  \label{conservation law general case}
& \int_{\Omega} \Tilde{a_i} \ dx+\int_{\Omega} \Tilde{c_j} \ dx=\int_{\Omega}\Tilde{a}_{i,0}(x) \ dx+\int_{\Omega} \Tilde{c}_{j,0}(x) \ dx := M_{1,ij}
\\& \int_{\Omega} \Tilde{b_i} \ dx+\int_{\Omega} \Tilde{c_j} \ dx=\int_{\Omega}\Tilde{b}_{i,0}(x) \ dx+\int_{\Omega} \Tilde{c}_{i,0}(x) \ dx 
:= M_{2,ij}
\end{split}
\end{equation}

Again we are interested in the {\em accessible boundary equilibrium} of a reaction network, as long as $M_{1,ij} > M_{2,ij} \ i=1,...,l,j=1,...,r$
there are two types of  equilibria following the conservation laws and we name 
 $( {a}_{i,\infty} , 0 , {c}_{j,\infty})$ as unique accessible boundary equilibria which follows  \eqref{conservation law general case},
$$
{a}_{i,\infty} + {c}_{j,\infty} = M_{1,ij}, {b}_{i,\infty} + {c}_{j,\infty} = M_{2,ij}
$$

Similarly we introduce new variables as perturbation  around the boundary equilibrium
$$
a_i = \Tilde{a_i} - {a}_{i,\infty}, b = b, c_j = \Tilde{c_j} - {c}_{j,\infty}, u =(a_i,b,c_j)^{\intercal}
$$
Then we can get the equation for $a_i$, $b$ and $c_j$ with $i=1,...,l$, $j=1,...,r$
\begin{equation}
\begin{cases}     \label{equation for a and b and c general case}
\partial_t a_i - d_i \Delta a_i =  b  \prod {c}_{j,\infty}  + N(a_i,b,c_j) &  x \in \Omega, t > 0 \\
\partial_t b - d_b \Delta b  
= b  \prod {c}_{j,\infty}  + N(a_i,b,c_j) &  x \in \Omega, t > 0 \\
\partial_t c_j - d_j \Delta c_j 
= - b  \prod {c}_{j,\infty}  - N(a_i,b,c_j)  &  x \in \Omega, t > 0 \\
\frac{\partial a}{\partial \mathbf{n}}= \frac{\partial b}{\partial \mathbf{n}}=\frac{\partial c}{\partial \mathbf{n}} =0 &  x \in \partial \Omega, t > 0\\
{a_i}(x,0)= {a}_{i,0}(x), {b}(x,0)= {b}_0(x {c_j}(x,0)= {c}_{j,0}(x) &  x \in \Omega \\
\end{cases}
\end{equation}
where $N(a_i,b,c_j) = - b^2 \prod (a_i + \Tilde{a}_{i,\infty}) + b \prod (c_j + {c}_{j,\infty}) - b  \prod {c}_{j,\infty} $.

Again we can express \eqref{equation for a and b and c general case} as
$$
u_t = L_1 u + N_u (u)
$$
where $L_1 \coloneqq  
\left(
\begin{matrix}
   d_i \Delta  & \prod {c}_{j,\infty} & 0 \\
   0 & d_b \Delta  + \prod {c}_{j,\infty} & 0 \\
   0 & -\prod {c}_{j,\infty} & d_j \Delta 
\end{matrix}
\right) $ and 
$N_{1}(u) \coloneqq 
\begin{pmatrix}
    N(a_i,b,c_j) \\
    N(a_i,b,c_j) \\
    - N(a_i,b,c_j) \\
\end{pmatrix}.
$

Similarly we can get the largest eigenvalue for $L_1$ is $\prod {c}_{j,\infty} > 0$, then we can get
$$
\|e^{t L_1} u_0 \|_{2} \leq e^{\prod {c}_{j,\infty} t} \|u_0 \|_{2}
$$
which implies
$$
\|e^{t L_1} \|_{2} \leq e^{\prod {c}_{j,\infty} t}.
$$

In order to use the elliptic estimate, we also need the following variables
$$
a_t = \Tilde{a}_t , b_t = \Tilde{b}_t, c_t = \Tilde{c}_t , u_t =(a_t,b_t,c_t)^{\intercal}
$$
Taking the time derivative on \eqref{equation for a and b and c general case}, we get
\begin{equation}
\begin{split}  \label{equation for a_t,b_t,c_t in general}
& u_{tt} = L_{2} u_t + N_{2}(u, u_t)
\end{split}
\end{equation}
where 
$N_{2}(u, u_t) \coloneqq \partial_t [N_1 (u)]
$ 
and $L_2 =  L_1$.
Recall $y^{\intercal} = (u^{\intercal}, u^{\intercal}_t)$ and get the equation for $y$,
\begin{equation}
\begin{split}  \label{equation for y}
y_t 
= L y + N(y)
\end{split}
\end{equation}
where $L = \left(
\begin{matrix}
   L_1 & 0 \\
   0 & L_{2} 
\end{matrix}
\right) 
\ \text{and} \
N(y) = \begin{pmatrix}
    N_1 (u) \\
    N_{2} (u, u_t) \\
\end{pmatrix}$.
Again considering Lemma \ref{lemma1} and $L$ is block diagonal matrix, we can get 
$$
\|e^{tL} \| \leq e^{\prod {c}_{j,\infty} t} 
$$

Since the linear term $Ly$ dominates $N(y)$ term (or the right hand side) because of the smallness of $\interleave y \interleave $ and 
the assumption of $\| y_0 \| = 1$, $\int_{\Omega} b_0 dx \neq 0$ $(b_0 \geq 0)$ and $\interleave  y_0 \interleave < \infty$ and the conservation law \eqref{conservation law general case} also implies the existence of the constant $C_p > 0$ such that
$ \|e^{tL} y_0\| \geq C_P e^{c_{\infty} t} $,
we can use the similar analysis as above to get the local instability of the accessible boundary equilibria.

\section{Local stability for $
\alpha_1 A_1 + ... +  \alpha_n A_n \rightleftharpoons \beta_1 A_1 + ... +  \beta_n A_n
$}

To show the stability at the unique positive equilibria $u_{\infty}$, we again introduce the small perturbation $u_{i} =\Tilde{u}_{i} - u_{i_{\infty}}$ around the boundary equilibria.
Then we get the following equation for perturbation.
\begin{equation}
\begin{split}
    & \partial_t u_i - d_i \Delta u_i = (\beta_i - \alpha_i)(\prod(u_{i} + u_{i_{\infty}} )^{\alpha_i} - \prod(u_{i} + u_{i_{\infty}} )^{\beta_i})
    \\& = (\beta_i - \alpha_i) \prod(u_{i} + u_{i_{\infty}} )^{\gamma_i}
    (\prod(u_{i} + u_{i_{\infty}} )^{\alpha_i - \gamma_i} - \prod(u_{i} + u_{i_{\infty}} )^{\beta_i - \gamma_i})
\end{split}
\end{equation}
where $\gamma = (\gamma_1,...,\gamma_n)$ with $\gamma_i = \min \{ \alpha_i, \beta_i \}$. 
We also donate
$$
L := \{ i \in \{1,...,n\} | \alpha_i > \beta_i \}, R := \{ j \in \{1,...,n\} | \alpha_j < \beta_j \}
$$
$$
L_0 := \{ i_0 \in \{1,...,n\} | \alpha_{i_0} \neq 0 \}, R_0 := \{ j_0 \in \{1,...,n\} | \beta_{j_0} \neq 0 \}
$$
and we assume $L \neq \emptyset, R \neq \emptyset, L \cup R = \{1,2,...,n\}$ and $ L_0 \cap R_0 \neq \emptyset $. 
The last assumption means we don't consider the case where the system only has positive equilibrium since \cite{DFT2} has already shown the global convergence without boundary equilibrium.

Now we we start proving the main theorem, Theorem \ref{stable theorem in general}
in this section.
First we do the energy estimate on the system.

W.l.o.g we assume there exists $m$ such that $0 < m < n$ and $L = \{ 1,...,m \}$, $R = \{m+1,...,n\}$. Then we write the perturbation in the following way
 \begin{equation}
\begin{split}  \label{equation in general}
    & \partial_t u_i - d_i \Delta u_i 
    \\& = (\beta_i - \alpha_i) \prod(u_{i} + u_{i_{\infty}} )^{\gamma_i}
    (\prod(u_{i} + u_{i_{\infty}} )^{\alpha_i - \gamma_i} - \prod(u_{i} + u_{i_{\infty}} )^{\beta_i - \gamma_i})
    \\& = (\beta_i - \alpha_i) \prod(u_{i} + u_{i_{\infty}} )^{\gamma_i}
    \{ [ u_{{\infty}} ^{\alpha - \gamma} + \sum\limits^{m}_{i=1} (\alpha_i - \gamma_i) u_i \frac{u_{{\infty}} ^{\alpha - \gamma}}{u_{i_{\infty}}} + N_1(u,u_{\infty}) ]
    \\& - [ u_{\infty}^{\beta - \gamma} + \sum\limits^{n}_{j= m+1} (\beta_j - \gamma_j) u_j \frac{u_{{\infty}} ^{\beta - \gamma}}{u_{j_{\infty}}} + N_2(u,u_{\infty}) ] \}
    \\& = (\beta_i - \alpha_i) \prod(u_{i} + u_{i_{\infty}} )^{\gamma_i}
     [  \sum\limits^{m}_{i=1} (\alpha_i - \gamma_i) u_i \frac{u_{{\infty}} ^{\alpha - \gamma}}{u_{i_{\infty}}} 
 -  \sum\limits^{n}_{j= m+1} (\beta_j - \gamma_j) u_j \frac{u_{{\infty}} ^{\beta - \gamma}}{u_{j_{\infty}}} 
    \\& + N_1(u,u_{\infty}) - N_2(u,u_{\infty}) ] 
\end{split}
\end{equation}
where $ N_1(u,u_{\infty}) = \prod(u_{i} + u_{i_{\infty}} )^{\alpha_i - \gamma_i} - u_{{\infty}} ^{\alpha - \gamma} - \sum\limits^{m}_{i=1} (\alpha_i - \gamma_i) u_i \frac{u_{{\infty}} ^{\alpha - \gamma}}{u_{i_{\infty}}}$ and \\
$N_2(u,u_{\infty}) = \prod(u_{i} + u_{i_{\infty}} )^{\beta_i - \gamma_i} - u_{{\infty}} ^{\beta - \gamma} - \sum\limits^{n}_{j= m+1} (\beta_j - \gamma_j) u_j \frac{u_{{\infty}} ^{\beta - \gamma}}{u_{j_{\infty}}} $, both $N_1$ and $N_2$ are non-linear term w.r.t. $u_i$ and for simplicity we define $N := N_1 - N_2$.

Multiplying $ \frac{(\alpha_i - \gamma_i)}{(\alpha_i - \beta_i)}  \frac{u_{{\infty}} ^{\alpha - \gamma}}{u_{i_{\infty}}} u_i , \frac{(\beta_j - \gamma_j)}{(\beta_j - \alpha_j)} \frac{u_{{\infty}} ^{\beta - \gamma}}{u_{j_{\infty}}} u_j$ on \eqref{equation in general} respectively, then integrating over $\Omega$ by parts, we get the following

\begin{equation}
\begin{split}   \label{first part energy estimate for L_2 in general}
        & \frac{1}{2}\frac{d}{dt}( \sum\limits^{m}_{i=1} (\frac{(\alpha_i - \gamma_i)}{(\alpha - \beta_i)}  \frac{u_{{\infty}} ^{\alpha - \gamma}}{u_{i_{\infty}}} \|u_i\|^2_2
 +  \sum\limits^{n}_{j= m+1} \frac{(\beta_j - \gamma_j)}{(\beta_j - \alpha_j)} \frac{u_{{\infty}} ^{\beta - \gamma}}{u_{j_{\infty}}}  \|u_j\|^2_2 ) 
        \\& + (\sum\limits^{m}_{i=1} d_i \frac{(\alpha_i - \gamma_i)}{(\alpha - \beta_i)}  \frac{u_{{\infty}} ^{\alpha - \gamma}}{u_{i_{\infty}}} \|\nabla{u_i}\|^2_2 
        + \sum\limits^{n}_{j= m+1} d_j \frac{(\beta_j - \gamma_j)}{(\beta_j - \alpha_j)} \frac{u_{{\infty}} ^{\beta - \gamma}}{u_{j_{\infty}}}  \|\nabla{u_j}\|^2_2 )
        \\& = \int_{\Omega}
         \prod(u_{i} + u_{i_{\infty}} )^{\gamma_i}
     [  \sum\limits^{m}_{i=1} (\alpha_i - \gamma_i) u_i \frac{u_{{\infty}} ^{\alpha - \gamma}}{u_{i_{\infty}}} 
 -  \sum\limits^{n}_{j= m+1} (\beta_j - \gamma_j) u_j \frac{u_{{\infty}} ^{\beta - \gamma}}{u_{j_{\infty}}} 
    \\& + N(u,u_{\infty})  ]
    ( - \sum\limits^{m}_{i=1} (\alpha_i - \gamma_i) u_i \frac{u_{{\infty}} ^{\alpha - \gamma}}{u_{i_{\infty}}} 
+  \sum\limits^{n}_{j= m+1} (\beta_j - \gamma_j) u_j \frac{u_{{\infty}} ^{\beta - \gamma}}{u_{j_{\infty}}}) dx
\end{split}
\end{equation}

Now we do the estimate on the right hand side of \eqref{first part energy estimate for L_2 in general}.

\begin{lemma}    \label{first right side computation}
If $\forall t \geq 0$, 
$\sum\limits^{n}_{i=1}  \|  u_i(x,t)\|_{\infty} \leq \theta
$, we have
\begin{equation}
\begin{split}   
       & \int_{\Omega}
         \prod(u_{i} + u_{i_{\infty}} )^{\gamma_i}
     [  \sum\limits^{m}_{i=1} (\alpha_i - \gamma_i) u_i \frac{u_{{\infty}} ^{\alpha - \gamma}}{u_{i_{\infty}}} 
 -  \sum\limits^{n}_{j= m+1} (\beta_j - \gamma_j) u_j \frac{u_{{\infty}} ^{\beta - \gamma}}{u_{j_{\infty}}} 
 + N(u,u_{\infty})  ]
  ( - \sum\limits^{m}_{i=1} (\alpha_i - \gamma_i)  
\frac{u_{{\infty}} ^{\alpha - \gamma}}{u_{i_{\infty}}} 
\\& u_i +  \sum\limits^{n}_{j= m+1} (\beta_j - \gamma_j) u_j \frac{u_{{\infty}} ^{\beta - \gamma}}{u_{j_{\infty}}}) dx
\lesssim  \sum\limits^{m}_{i=1} d_i \frac{(\alpha_i - \gamma_i)}{(\alpha - \beta_i)}  \frac{u_{{\infty}} ^{\alpha - \gamma}}{u_{i_{\infty}}} \|\nabla{u_i}\|^2_2 
        + \sum\limits^{n}_{j= m+1} d_j \frac{(\beta_j - \gamma_j)}{(\beta_j - \alpha_j)} \frac{u_{{\infty}} ^{\beta - \gamma}}{u_{j_{\infty}}}  \|\nabla{u_j}\|^2_2         
\end{split}
\end{equation}

\end{lemma}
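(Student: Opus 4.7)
The plan is to mirror the case analysis carried out for $A+2B\rightleftharpoons B+C$ in the introduction. Setting $P:=\prod(u_i+u_{i_\infty})^{\gamma_i}$, $L_{\mathrm{lin}}:=\sum_{i\in L}(\alpha_i-\gamma_i)\frac{u_{\infty}^{\alpha-\gamma}}{u_{i_\infty}}u_i-\sum_{j\in R}(\beta_j-\gamma_j)\frac{u_{\infty}^{\beta-\gamma}}{u_{j_\infty}}u_j$, and $N:=N_1-N_2$, the integrand on the right-hand side equals $-PL_{\mathrm{lin}}^{2}-PNL_{\mathrm{lin}}$. Because $\|u\|_\infty\le\theta$ keeps $u_i+u_{i_\infty}>0$ and hence $P$ uniformly bounded above and below by positive constants, the first summand is non-positive and may be discarded; the whole task reduces to bounding $-\int_\Omega P\,N\,L_{\mathrm{lin}}\,dx$ by the dissipation terms.

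Next, because $N_1$ (resp.\ $N_2$) is the Taylor remainder of $\prod(u_i+u_{i_\infty})^{\alpha_i-\gamma_i}$ (resp.\ $\prod(u_i+u_{i_\infty})^{\beta_i-\gamma_i}$) after subtracting the constant and linear parts, each is at least quadratic in $u$, and factoring a single $u_k$ out gives the pointwise bound $|N|\lesssim\|u\|_{\infty}\sum_k|u_k|$. Combined with $|L_{\mathrm{lin}}|\lesssim\sum_k|u_k|$, the matter reduces to controlling $\int_\Omega\sum_{k,\ell}|u_k||u_\ell|\,dx$ by $\sum_i\|\nabla u_i\|_2^{2}$.

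The key input is the conservation law \eqref{conservation law on steady state}, which forces $\int_\Omega u_i(t,x)\,dx=\lambda(t)(\beta_i-\alpha_i)$ for a single scalar $\lambda(t)$ common to all components. Hence any combination $\sum c_i u_i$ with $\sum c_i(\beta_i-\alpha_i)=0$ is spatially mean-zero; in particular, for each $(i,j)\in L\times R$ the variable $v_{ij}:=(\beta_j-\alpha_j)u_i-(\beta_i-\alpha_i)u_j$ has $\int_\Omega v_{ij}=0$, so Poincar\'e gives $\|v_{ij}\|_2\lesssim\|\nabla u_i\|_2+\|\nabla u_j\|_2$. I then run the analog of the $d,e$ sign analysis from the simple case: on the subset where $u_iu_j\le0$ one has $|u_iu_j|\lesssim v_{ij}^{2}$, and a Sobolev/$L^\infty$--$L^2$ interpolation $\int v_{ij}^{4}\lesssim\|v_{ij}\|_\infty^{2}\|v_{ij}\|_2^{2}\lesssim\theta^{2}\|v_{ij}\|_2^{2}$ pushes this onto the diffusion terms; on the complementary subset the integrand in the original expression is already non-positive by the sign structure of $(L_{\mathrm{lin}}+N)(-L_{\mathrm{lin}})$ and is simply discarded.

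The hardest part will be that individual $u_i$ need not have zero spatial mean, so Poincar\'e cannot be applied to $\|u_i\|_2$ directly; everything must be routed through the zero-mean combinations $v_{ij}$, and the case split becomes considerably bulkier once $|L|+|R|>3$ and the remainder $N$ carries monomials of arbitrarily high degree in $u$. A careful bookkeeping of the stoichiometric coefficients $(\alpha_i-\gamma_i)$, $(\beta_j-\gamma_j)$ and the equilibrium values $u_{i_\infty}$ is required so that the implicit constant in $\lesssim$ stays independent of $\theta$, enabling the next lemma to close the energy inequality by choosing $\theta$ sufficiently small.
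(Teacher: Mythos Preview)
Your overall strategy---route everything through the zero-mean combinations coming from the conservation laws and then apply Poincar\'e---is exactly what the paper does. However, the sign bookkeeping in your sketch is reversed, and this matters.

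For $(i,j)\in L\times R$ your $v_{ij}=(\beta_j-\alpha_j)u_i-(\beta_i-\alpha_i)u_j$ is a \emph{positive} linear combination of $u_i$ and $u_j$ (since $\beta_j-\alpha_j>0$ and $-(\beta_i-\alpha_i)>0$). Consequently the pointwise bound $|u_iu_j|\lesssim v_{ij}^{2}$ holds precisely when $u_i$ and $u_j$ have the \emph{same} sign, not when $u_iu_j\le 0$; on the opposite-sign set $v_{ij}$ can vanish while $u_iu_j\ne 0$. This also breaks your ``complementary subset'' claim: the integrand $P\,(L_{\mathrm{lin}}+N)(-L_{\mathrm{lin}})$ is guaranteed non-positive only when \emph{all} of $\{u_i\}_{i\in L}$ carry one sign and \emph{all} of $\{u_j\}_{j\in R}$ the opposite (the paper's Case~1), not merely when some single cross pair has opposite signs.

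The paper repairs this as follows. First it completes the square, $(L_{\mathrm{lin}}+N)(-L_{\mathrm{lin}})\le\tfrac14 N^{2}$, so only $\int P\,N^{2}$ needs control. Then, outside Case~1, it shows that \emph{every} $|u_k|$ is pointwise dominated by some $|\theta_{l,r}|$ or by a difference $|\theta_{l,r}-\theta_{l,r'}|$ (equivalently $|v_{l,r}-v_{l,r'}|$). The mechanism is: if we are not in Case~1 then either $\{u_j\}_{j\in R}$ already contains both signs, or some $u_l$ with $l\in L$ matches the common sign of the $R$-side; in either situation each index has a same-sign partner across $L\times R$, which gives $|u_l|\le(\alpha_l-\beta_l)|\theta_{l,k_l}|$, and the remaining opposite-sign indices are handled by differences of $\theta$'s. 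This is the ``bulkier'' case split you allude to, and it cannot be replaced by a single per-pair dichotomy. Once every $u_j^{2}\lesssim\sum_{l,r}\theta_{l,r}^{2}$ pointwise, one pulls the other factor of $N$ into $L^\infty$ and finishes with Poincar\'e on the $\theta_{l,r}$.
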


\begin{proof}

Now we consider the sign situation for $\{ u_i \}, {i=1,...,n}$ in following two cases. 

\begin{enumerate}

\item The first case is when the sign for $\{u_i\}_{i \in L}$ is different from $\{u_j\}_{j \in R}$,
\begin{enumerate}
\item{} $\forall i \in L$ $ u_i \leq 0$, $\forall j \in R$ $ u_j \geq 0$.
\item{} $\forall i \in L$ $ u_i \geq 0$ , $\forall j \in R$ $ u_j \leq 0$.
\end{enumerate}

\item The rest situations belong to the second case
and we divide this case into three following parts,
\begin{enumerate}
\item{} $\{u_j\}_{j \in R}$ has positive and negative members.
\item{} $\forall j \in R$ $u_j \leq 0$, $\exists  i \in L$ such that $u_i \leq 0$.
\item{} $\forall j \in R$ $u_j \geq 0$, $\exists  i \in L$ such that $u_i \geq 0$.
\end{enumerate}

\end{enumerate}

We first deal with $2(a)$ when $ \{ u_r \}_{r \in R}$ has positive and negative members. For each $l \in L$ with $u_l \leq 0$, we further assume that $u_N \leq 0$ for $N \in \{m+1,...,o\}$ and $u_P \geq 0$ for $P \in \{o+1,...,n\}$. Recall \eqref{conservation law on steady state}, we have the following conservation laws, $\forall l \in L$, $\forall k \in R$
\begin{equation}
    \begin{split}  \label{first part general conservation law}
        \frac{1}{\alpha_l - \beta_l} \int_{\Omega} u_l(t,x) \ dx + \frac{1}{\beta_k - \alpha_k} \int_{\Omega} u_k(t,x) \ dx = 0
    \end{split}
\end{equation}
Here we define $\theta_{l,k} = \frac{1}{\alpha_l - \beta_l} u_l + \frac{1}{\beta_k - \alpha_k} u_k$. From \eqref{first part general conservation law}, we get $\int_{\Omega} \theta_{l,k}(t,x) \ dx = 0$.

For $N \in \{m+1,...,o\}$, since $u_l, u_N \leq 0$, we have
$$
|u_l| = |(\alpha_l - \beta_l) \theta_{l,N} - \frac{\alpha_l - \beta_l}{\beta_N - \alpha_N} u_N| \leq (\alpha_l - \beta_l) |\theta_{l,N}|,
$$
$$ 
|u_N| = |(\beta_N - \alpha_N) \theta_{l,N} - \frac{\beta_N - \alpha_N}{\alpha_l - \beta_l} u_l| \leq (\beta_N - \alpha_N) |\theta_{l,N}|
$$

For $P \in \{o+1,...,n\}$, since $u_P \geq 0, u_o \leq 0$, we have 
$$
\theta_{l,P} - \theta_{l,o} = \frac{1}{\beta_P - \alpha_P} u_P - \frac{1}{\beta_o - \alpha_o} u_o \geq 0
$$
this implies
$$
0 \leq u_P = (\beta_P - \alpha_P)
( \theta_{l,P} - \theta_{l,o} ) + \frac{\beta_P - \alpha_P}{\beta_o - \alpha_o} u_o \leq (\beta_P - \alpha_P)
( \theta_{l,P} - \theta_{l,o} )
$$
Combining the above two parts, we have for each $r \in R$, $l \in L$ with $u_l \leq 0$,
\begin{equation}  \label{sign estimate 2a 1}
u_r \leq \left\{
\begin{aligned}
(\beta_r - \alpha_r) |\theta_{l,r}| &, & r \in \{m+1,...,o\} \\
(\beta_r - \alpha_r)
( \theta_{l,r} - \theta_{l,o} ) &, & r \in \{o+1,...,n\}
\end{aligned}
\right.
\end{equation}
$$
|u_l| \leq (\alpha_l - \beta_l) |\theta_{l, k_{l}}|,
$$
where $k_{l} \in R$ and $u_{k_{l}}, u_l$ have the same sign.

For each $l \in L$ with $u_l \geq 0$ , recall that $ \{ u_r \}_{r \in R}$ has positive and negative members and $u_N \leq 0$ for $N \in \{m+1,...,o\}$ and $u_P \geq 0$ for $P \in \{o+1,...,n\}$ and $\theta_{l,k} = \frac{1}{\alpha_l - \beta_l} u_l + \frac{1}{\beta_k - \alpha_k} u_k$ with $\int_{\Omega} \theta_{l,k}(t,x) \ dx = 0$, $\forall k \in R$.

For $P \in \{o+1,...,n\}$, since $u_l, u_P \geq 0$, we have
$$
0 \leq u_l = (\alpha_l - \beta_l) \theta_{l,P} - \frac{\alpha_l - \beta_l}{\beta_P - \alpha_P} u_P \leq (\alpha_l - \beta_l) \theta_{l,P},
$$
$$ 
0 \leq u_P = (\beta_P -\alpha_P ) \theta_{l,P} - \frac{\beta_P -\alpha_P }{\alpha_l - \beta_l} u_l \leq (\beta_P - \alpha_P) \theta_{l,P}
$$

For $N \in \{m+1,...,o\}$, since $u_N \leq 0, u_n \geq 0$, we have 
$$
\theta_{l,N} - \theta_{l,n} = \frac{1}{\beta_N - \alpha_N} u_N - \frac{1}{\beta_n - \alpha_n} u_n \leq 0
$$
this implies
$$
|u_N| = |(\beta_N - \alpha_N)
( \theta_{l,N} - \theta_{l,n} ) + \frac{\beta_N - \alpha_N}{\beta_n - \alpha_n} u_n | \leq 
(\beta_N - \alpha_N)
| \theta_{l,N} - \theta_{l,n} |
$$
Combining the above two parts, we have for each $r \in R$, $l \in L$ with $u_l \geq 0$,
\begin{equation} \label{sign estimate 2a 2}
u_r \leq \left\{
\begin{aligned}
(\beta_r - \alpha_r)
| \theta_{l,r} - \theta_{l,o} | &, & r \in \{m+1,...,o\} \\
(\beta_r - \alpha_r) \theta_{l,r} &, & r \in \{o+1,...,n\}
\end{aligned}
\right.
\end{equation}
$$
| u_l | \leq   (\alpha_l - \beta_l) |\theta_{l,k_{l}}| 
$$
where $k_{l} \in R$ and $u_{k_{l}}, u_l$ have the same sign.

In $2(b)$ when $\forall j \in R$ $u_j \leq 0$, $\exists  i \in L$ such that $u_i \leq 0$. Then we can assume that $u_N \leq 0$ for $N \in \{1,...,q\}$ and $u_P \geq 0$ for $P \in \{q+1,...,m\}$. Again we define $\theta_{l,k} = \frac{1}{\alpha_l - \beta_l} u_l + \frac{1}{\beta_k - \alpha_k} u_k$, $\forall l \in L$, $\forall k \in R$ with $\int_{\Omega} \theta_{l,k}(t,x) \ dx = 0$ and we do the similar estimate as \eqref{sign estimate 2a 1}.

For $N \in \{1,...,q\}$, since $ u_N \leq 0$, $\forall j \in R$ $u_j \leq 0$, we have
$$
|u_N| = |(\alpha_N - \beta_N) \theta_{N,j} - \frac{\alpha_N - \beta_N}{\beta_j - \alpha_j} u_j| \leq (\alpha_N - \beta_N) |\theta_{N,j}|,
$$
$$ 
|u_j| = |(\beta_j - \alpha_j) \theta_{N,j} - \frac{\beta_j - \alpha_j}{\alpha_N - \beta_N} u_j| \leq (\beta_j - \alpha_j) |\theta_{N,j}|
$$

For $P \in \{q+1,...,m\}$, since $u_P \geq 0, u_q \leq 0$, we have 
$$
\theta_{P,j} - \theta_{q,j} = \frac{1}{\alpha_P - \beta_P} u_P - \frac{1}{\alpha_q - \beta_q} u_q \geq 0
$$
this implies
$$
0 \leq u_P = (\alpha_P - \beta_P)
( \theta_{P,j} - \theta_{q,j}) + \frac{\alpha_P - \beta_P}{\alpha_q - \beta_q} u_q \leq (\alpha_P - \beta_P)
( \theta_{P,j} - \theta_{q,j})
$$
Thus we have for each $j \in R$, $l \in L$,
\begin{equation}
u_l \leq \left\{
\begin{aligned}
(\alpha_l - \beta_l) |\theta_{l,j}| &, & l \in \{1,...,q\} \\
(\alpha_l - \beta_l)
( \theta_{l,j} - \theta_{q,j}) &, & l \in \{q+1,...,m\}
\end{aligned}
\right.
\end{equation}
$$
|u_j| \leq (\beta_j - \alpha_j) |\theta_{k_j,j}|,
$$
where $k_{j} \in L$ and $u_{k_{j}}, u_j$ have the same sign.

In $2(c)$ when $\forall j \in R$ $u_j \geq 0$, $\exists  i \in L$ such that $u_i \geq 0$. Then we can assume that $u_N \geq 0$ for $N \in \{1,...,q\}$ and $u_P \leq 0$ for $P \in \{q+1,...,m\}$. Again we define $\theta_{l,k} = \frac{1}{\alpha_l - \beta_l} u_l + \frac{1}{\beta_k - \alpha_k} u_k$, $\forall l \in L$, $\forall k \in R$ with $\int_{\Omega} \theta_{l,k}(t,x) \ dx = 0$ and we do the similar estimate as \eqref{sign estimate 2a 2}.

For $N \in \{1,...,q\}$, since $ u_N \geq 0$, $\forall j \in R$ $u_j \geq 0$, we have
$$
|u_N| = |(\alpha_N - \beta_N) \theta_{N,j} - \frac{\alpha_N - \beta_N}{\beta_j - \alpha_j} u_j| \leq (\alpha_N - \beta_N) |\theta_{N,j}|,
$$
$$ 
|u_j| = |(\beta_j - \alpha_j) \theta_{N,j} - \frac{\beta_j - \alpha_j}{\alpha_N - \beta_N} u_j| \leq (\beta_j - \alpha_j) |\theta_{N,j}|
$$

For $P \in \{q+1,...,m\}$, since $u_P \leq 0, u_q \geq 0$, we have 
$$
\theta_{P,j} - \theta_{q,j} = \frac{1}{\alpha_P - \beta_P} u_P - \frac{1}{\alpha_q - \beta_q} u_q \leq 0
$$
this implies
$$
|u_P| = | (\alpha_P - \beta_P)
( \theta_{P,j} - \theta_{q,j}) + \frac{\alpha_P - \beta_P}{\alpha_q - \beta_q} u_q | \leq  (\alpha_P - \beta_P)
|  \theta_{P,j} - \theta_{q,j} |
$$
Thus we have for each $j \in R$, $l \in L$,
\begin{equation}
u_l \leq \left\{
\begin{aligned}
(\alpha_l - \beta_l) |\theta_{l,j}| &, & l \in \{1,...,q\} \\
(\alpha_l - \beta_l)
| \theta_{l,j} - \theta_{q,j} | &, & l \in \{q+1,...,m\}
\end{aligned}
\right.
\end{equation}
$$
|u_j| \leq (\beta_j - \alpha_j) |\theta_{k_j,j}|,
$$
where $k_{j} \in L$ and $u_{k_{j}}, u_j$ have the same sign.

Recall the right hand side of \eqref{first part energy estimate for L_2 in general} and the following inequality,
\begin{equation*}
\begin{split}
& [  \sum\limits^{m}_{i=1} (\alpha_i - \gamma_i) u_i \frac{u_{{\infty}} ^{\alpha - \gamma}}{u_{i_{\infty}}} 
 -  \sum\limits^{n}_{j= m+1} (\beta_j - \gamma_j) u_j \frac{u_{{\infty}} ^{\beta - \gamma}}{u_{j_{\infty}}} 
 + N(u,u_{\infty})  ]
    ( - \sum\limits^{m}_{i=1} (\alpha_i - \gamma_i) u_i \frac{u_{{\infty}} ^{\alpha - \gamma}}{u_{i_{\infty}}}
\\& +  \sum\limits^{n}_{j= m+1} (\beta_j - \gamma_j) u_j \frac{u_{{\infty}} ^{\beta - \gamma}}{u_{j_{\infty}}})
\leq 
  \frac{1}{4} 
 (N (u,u_{\infty}))^2  
\end{split}
\end{equation*}

which implies that
\begin{equation}
\begin{split}   \label{second case nonlinear estiamte}
& \int_{\Omega}
 \prod(u_{i} + u_{i_{\infty}} )^{\gamma_i}
[  \sum\limits^{m}_{i=1} (\alpha_i - \gamma_i) u_i \frac{u_{{\infty}} ^{\alpha - \gamma}}{u_{i_{\infty}}} 
 -  \sum\limits^{n}_{j= m+1} (\beta_j - \gamma_j) u_j \frac{u_{{\infty}} ^{\beta - \gamma}}{u_{j_{\infty}}} 
    \\& + N(u,u_{\infty})  ]
    ( - \sum\limits^{m}_{i=1} (\alpha_i - \gamma_i) u_i \frac{u_{{\infty}} ^{\alpha - \gamma}}{u_{i_{\infty}}}
+  \sum\limits^{n}_{j= m+1} (\beta_j - \gamma_j) u_j \frac{u_{{\infty}} ^{\beta - \gamma}}{u_{j_{\infty}}}) dx
\\& \leq 
  \frac{1}{4} \int_{\Omega}
 \prod(u_{i} + u_{i_{\infty}} )^{\gamma_i}
 (N (u,u_{\infty}))^2  dx
\end{split}
\end{equation}
\\
Since $N(u,u_{\infty})$ is the non-linear part for $\prod(u_{i} + u_{i_{\infty}} )^{\alpha_i - \gamma_i} - \prod(u_{i} + u_{i_{\infty}} )^{\beta_i - \gamma_i}$, each component contains as least two of $\{ u_i \}, {i=1,...,n}$. So every non-linear component should be in the form of $f (u,u_{\infty}) u_i u_j $ where $f (u,u_{\infty})$ is the polynomial for $(u,u_{\infty})$ and we have the following estimate,
\begin{equation}
    \begin{split} \label{second case nonlinear component estimate}
         (f (u,u_{\infty}) u_i u_j)^2 & \leq \| f \cdot u_i \|^{2}_{\infty} \cdot u^2_j
         \\& \lesssim \| u_i \|^{2}_{\infty} \cdot
\sum\limits_{l \in L, r \in R} \theta^2_{l, r} 
%\left\{
%\begin{aligned}
%\| u_i \|^{2}_{\infty} \cdot \theta^2_{j, k_{j}} & ,& j \in \{1,...,m\} \\
%\| u_i \|^{2}_{\infty} \cdot \theta^2_{l,2} & ,& 2 \in \{m+1,...,o\} \\
%\| u_i \|^{2}_{\infty} \cdot
%( \theta^2_{l, j} + \theta^2_{l,o_{j}} ) & ,& 2 \in \{o+1,...,n\}
%\end{aligned}
%\right.
    \end{split}
\end{equation}
From \eqref{second case nonlinear estiamte}, \eqref{second case nonlinear component estimate} and using Poincare inequality motivated from  $\int_{\Omega} \theta_{l,k}(t,x) \ dx = 0$, we get
\begin{equation}
\begin{split}   
&  
  \frac{1}{4} \int_{\Omega}
 \prod(u_{i} + u_{i_{\infty}} )^{\gamma_i}
 (f (u,u_{\infty}) u_i u_j)^2  dx
\\& \lesssim \prod(\|u_{i}\|_{\infty} + u_{i_{\infty}} )^{\gamma_i} \| u_i \|^{2}_{\infty} \sum\limits_{l \in L, r \in R} \int_{\Omega} \theta^2_{l,r} dx
\\& \lesssim \| u_1 \|^{2}_{\infty} \sum\limits_{l \in L, r \in R} \int_{\Omega} \nabla^2{\theta_{l,r}} dx \lesssim \| u_i \|^{2}_{\infty} \sum\limits_{l \in L, r \in R} ( \|\nabla{u_l}\|^2_2 + \|\nabla{u_{r}}\|^2_2 )
\end{split}
\end{equation}
We can do the similar estimate on all non-linear components of $N(u, u_{\infty})$ as above.
Therefore as long as $\sum\limits_{i=1}^{n} \| u_i \|_{\infty} \leq \theta$ are sufficiently small such that
$\forall i \in L$, $\forall j \in R$, 
$$
\frac{1}{4} 
 \prod(\|u_{i}\|_{\infty} + u_{i_{\infty}} )^{\gamma_i}
 \|f(\theta, u_{i_{\infty}})\|^2_{\infty} \theta^2 \leq \min \{d_i \frac{(\alpha_i - \gamma_i)}{(\alpha - \beta_i)}  \frac{u_{{\infty}} ^{\alpha - \gamma}}{u_{i_{\infty}}}, d_j \frac{(\beta_j - \gamma_j)}{(\beta_j - \alpha_j)} \frac{u_{{\infty}} ^{\beta - \gamma}}{u_{j_{\infty}}} \},
$$ we can get
\begin{equation}
\begin{split}   \label{second case energy estimate for L_2 in general}
        &  \sum\limits^{m}_{i=1} d_i \frac{(\alpha_i - \gamma_i)}{(\alpha - \beta_i)}  \frac{u_{{\infty}} ^{\alpha - \gamma}}{u_{i_{\infty}}} \|\nabla{u_i}\|^2_2 
        + \sum\limits^{n}_{j= m+1} d_j \frac{(\beta_j - \gamma_j)}{(\beta_j - \alpha_j)} \frac{u_{{\infty}} ^{\beta - \gamma}}{u_{j_{\infty}}}  \|\nabla{u_j}\|^2_2 
        \\& \geq \int_{\Omega}
         \prod(u_{i} + u_{i_{\infty}} )^{\gamma_i}
     [  \sum\limits^{m}_{i=1} (\alpha_i - \gamma_i) u_i \frac{u_{{\infty}} ^{\alpha - \gamma}}{u_{i_{\infty}}} 
 -  \sum\limits^{n}_{j= m+1} (\beta_j - \gamma_j) u_j \frac{u_{{\infty}} ^{\beta - \gamma}}{u_{j_{\infty}}} 
    \\& + N(u,u_{\infty})  ]
    ( - \sum\limits^{m}_{i=1} (\alpha_i - \gamma_i) u_i \frac{u_{{\infty}} ^{\alpha - \gamma}}{u_{i_{\infty}}} 
+  \sum\limits^{n}_{j= m+1} (\beta_j - \gamma_j) u_j \frac{u_{{\infty}} ^{\beta - \gamma}}{u_{j_{\infty}}}) dx
\end{split}
\end{equation}

In the first case, we first consider $1(a)$ when $\forall i \in L$ $ u_i \leq 0$, $\forall j \in R$ $ u_j \geq 0$. 
This implies
$$
\sum\limits^{m}_{i=1} (\alpha_i - \gamma_i) u_i \frac{u_{{\infty}} ^{\alpha - \gamma}}{u_{i_{\infty}}} 
 -  \sum\limits^{n}_{j= m+1} (\beta_j - \gamma_j) u_j \frac{u_{{\infty}} ^{\beta - \gamma}}{u_{j_{\infty}}} \leq 0
$$
Recall $N(u,u_{\infty})$ is the non-linear part and each component contains as least two of $\{ u_i \}_{i=1,...,n}$, as long as $\| u_i \|_{\infty}$ are sufficiently small, we can get
$$
\sum\limits^{m}_{i=1} (\alpha_i - \gamma_i) u_i \frac{u_{{\infty}} ^{\alpha - \gamma}}{u_{i_{\infty}}} 
 -  \sum\limits^{n}_{j= m+1} (\beta_j - \gamma_j) u_j \frac{u_{{\infty}} ^{\beta - \gamma}}{u_{j_{\infty}}} 
 + N(u,u_{\infty}) \leq 0
$$
Also recall the right hand side of \eqref{first part energy estimate for L_2 in general}, we get
\begin{equation}
\begin{split}   \label{first case energy estimate for L_2 in general}
& \int_{\Omega}
 \prod(u_{i} + u_{i_{\infty}} )^{\gamma_i}
[  \sum\limits^{m}_{i=1} (\alpha_i - \gamma_i) u_i \frac{u_{{\infty}} ^{\alpha - \gamma}}{u_{i_{\infty}}} 
 -  \sum\limits^{n}_{j= m+1} (\beta_j - \gamma_j) u_j \frac{u_{{\infty}} ^{\beta - \gamma}}{u_{j_{\infty}}} 
    \\& + N(u,u_{\infty})  ]
    ( - \sum\limits^{m}_{i=1} (\alpha_i - \gamma_i) u_i \frac{u_{{\infty}} ^{\alpha - \gamma}}{u_{i_{\infty}}}
+  \sum\limits^{n}_{j= m+1} (\beta_j - \gamma_j) u_j \frac{u_{{\infty}} ^{\beta - \gamma}}{u_{j_{\infty}}}) dx
 \leq 0
\end{split}
\end{equation}
the above estimate also works for $1(b)$ when $\forall i \in L, u_i \geq 0$ and $\forall j \in R, u_j \leq 0$. 

\end{proof}

Combining \eqref{second case energy estimate for L_2 in general} \eqref{first case energy estimate for L_2 in general} and the equation \eqref{first part energy estimate for L_2 in general}, we get the first part of energy estimate

\begin{lemma}  \label{first right hand side estimate}
If $\forall t \geq 0$, 
$\sum\limits^{n}_{i=1}  \|  u_i(x,t)\|_{\infty} \leq \theta
$, then we have

\begin{equation} 
    \frac{d}{dt}( \sum\limits^{m}_{i=1} (\frac{(\alpha_i - \gamma_i)}{(\alpha - \beta_i)}  \frac{u_{{\infty}} ^{\alpha - \gamma}}{u_{i_{\infty}}} \|u_i\|^2_2
 +  \sum\limits^{n}_{j= m+1} \frac{(\beta_j - \gamma_j)}{(\beta_j - \alpha_j)} \frac{u_{{\infty}} ^{\beta - \gamma}}{u_{j_{\infty}}}  \|u_j\|^2_2 ) 
   \leq   0
\end{equation}
, this implies $\sum\limits^{n}_{i=1}  \|  u_i(x,t)\|_2$ decay w.r.t time.

\end{lemma}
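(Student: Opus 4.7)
The plan is to combine the energy identity \eqref{first part energy estimate for L_2 in general}, obtained by multiplying \eqref{equation in general} by the indicated weights and integrating by parts, with the right-hand-side bound already supplied by Lemma \ref{first right side computation}. Schematically, \eqref{first part energy estimate for L_2 in general} has the form $\frac{1}{2}\frac{d}{dt}E(t) + D(t) = R(t)$, where $E(t)$ is precisely the weighted $L^2$ energy appearing in the statement, $D(t)$ is the weighted gradient dissipation, and $R(t)$ is the nonlinear integral on the right-hand side. What must be shown is that the time derivative of $E(t)$ is non-positive.

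First I would unpack Lemma \ref{first right side computation} into its two constituent conclusions: in the opposite-sign configurations $1(a)$ and $1(b)$, the integrand is sign-definite and one gets $R(t) \leq 0$ directly via \eqref{first case energy estimate for L_2 in general}; in the remaining sign configurations $2(a)$--$2(c)$, the Completing-the-square type inequality together with the Poincar\'e inequality applied to the zero-mean conservation variables $\theta_{l,k}$ gives \eqref{second case energy estimate for L_2 in general}, namely $R(t) \leq D(t)$ — with \emph{no extra constant}, because the smallness threshold on $\theta$ stated inside Lemma \ref{first right side computation} was chosen precisely to absorb the implicit constant in the $\lesssim$ notation. Under the standing hypothesis $\sum_i \|u_i(\cdot,t)\|_\infty \leq \theta$ of the present lemma, both cases therefore deliver $R(t) \leq D(t)$ pointwise in $t$.

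Substituting $R(t) \leq D(t)$ into the identity yields $\frac{1}{2}\frac{d}{dt}E(t) = R(t) - D(t) \leq 0$, which is exactly the displayed inequality of the lemma. Since the weights $\frac{(\alpha_i - \gamma_i)}{(\alpha_i - \beta_i)}\frac{u_\infty^{\alpha-\gamma}}{u_{i_\infty}}$ and $\frac{(\beta_j - \gamma_j)}{(\beta_j - \alpha_j)}\frac{u_\infty^{\beta-\gamma}}{u_{j_\infty}}$ are strictly positive constants depending only on the stoichiometry $(\alpha,\beta)$ and the equilibrium $u_\infty$, monotonicity of $E$ transfers immediately to monotonicity of $\sum_{i=1}^{n} \|u_i(\cdot,t)\|_2^{2}$, and hence to the decay (in the sense of non-increase) of $\sum_{i=1}^{n} \|u_i(\cdot,t)\|_2$ asserted in the lemma.

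The main obstacle, and essentially the only substantive point, is verifying that the inequality $R(t) \leq D(t)$ really holds with constant $1$ and not merely up to an $O(1)$ factor; this is already implicit in how $\theta$ is chosen in Lemma \ref{first right side computation}, but it must be checked that the threshold is independent of $t$ so that the bound propagates uniformly in time. Granting this, the proof reduces to a one-line substitution into the energy identity.
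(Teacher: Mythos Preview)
Your proposal is correct and follows exactly the paper's approach: the paper's proof consists of the single sentence ``Combining \eqref{second case energy estimate for L_2 in general} \eqref{first case energy estimate for L_2 in general} and the equation \eqref{first part energy estimate for L_2 in general}, we get the first part of energy estimate,'' which is precisely your substitution of $R(t)\le D(t)$ from Lemma~\ref{first right side computation} into the energy identity. One small caveat: monotonicity of the \emph{weighted} energy $E(t)$ does not literally imply monotonicity of the unweighted sum $\sum_i\|u_i\|_2^2$, only the equivalence bound $\sum_i\|u_i(t)\|_2^2 \lesssim E(t)\le E(0)\lesssim \sum_i\|u_i(0)\|_2^2$; the paper's phrasing ``decay w.r.t.\ time'' is equally informal on this point, so your reading matches its intended meaning.
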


In order to use the elliptic estimate in Theorem \ref{nirenberg}, we need to do the energy estimate on $\| \partial_t u_i\|_2$. 
By taking time partial derivative on \eqref{equation in general}, 
multiplying $ \frac{(\alpha_i - \gamma_i)}{(\alpha_i - \beta_i)}  \frac{u_{{\infty}} ^{\alpha - \gamma}}{u_{i_{\infty}}} \partial_t u_i$, $\frac{(\beta_j - \gamma_j)}{(\beta_j - \alpha_j)} \frac{u_{{\infty}} ^{\beta - \gamma}}{u_{j_{\infty}}} \partial_t u_j$ respectively and  integrating over $\Omega$, we get the following

\begin{equation}
\begin{split}   \label{second part energy estimate for L_2 in general}
        & \frac{1}{2}\frac{d}{dt}( \sum\limits^{m}_{i=1} (\frac{(\alpha_i - \gamma_i)}{(\alpha - \beta_i)}  \frac{u_{{\infty}} ^{\alpha - \gamma}}{u_{i_{\infty}}} \| \partial_t u_i\|^2_2
 +  \sum\limits^{n}_{j= m+1} \frac{(\beta_j - \gamma_j)}{(\beta_j - \alpha_j)} \frac{u_{{\infty}} ^{\beta - \gamma}}{u_{j_{\infty}}}  \| \partial_t u_j\|^2_2 ) 
        \\& + (\sum\limits^{m}_{i=1} d_i \frac{(\alpha_i - \gamma_i)}{(\alpha - \beta_i)}  \frac{u_{{\infty}} ^{\alpha - \gamma}}{u_{i_{\infty}}} \|\nabla{ \partial_t u_i}\|^2_2 
        + \sum\limits^{n}_{j= m+1} d_j \frac{(\beta_j - \gamma_j)}{(\beta_j - \alpha_j)} \frac{u_{{\infty}} ^{\beta - \gamma}}{u_{j_{\infty}}}  \|\nabla{ \partial_t u_j}\|^2_2 )
\\& = \uppercase\expandafter{\romannumeral1} + \uppercase\expandafter{\romannumeral2} 
\end{split}
\end{equation}
where $ \uppercase\expandafter{\romannumeral1} = \int_{\Omega}
         \prod(u_{i} + u_{i_{\infty}} )^{\gamma_i}
     [  \sum\limits^{m}_{i=1} (\alpha_i - \gamma_i)   \frac{u_{{\infty}} ^{\alpha - \gamma}}{u_{i_{\infty}}} \partial_t u_i
 -  \sum\limits^{n}_{j= m+1} (\beta_j - \gamma_j)   \frac{u_{{\infty}} ^{\beta - \gamma}}{u_{j_{\infty}}} \partial_t u_j 
     +  \partial_t N(u,u_{\infty})  ]
   ( - \sum\limits^{m}_{i=1} (\alpha_i - \gamma_i)   \frac{u_{{\infty}} ^{\alpha - \gamma}}{u_{i_{\infty}}} \partial_t u_i
+  \sum\limits^{n}_{j= m+1} (\beta_j - \gamma_j)   \frac{u_{{\infty}} ^{\beta - \gamma}}{u_{j_{\infty}}} \partial_t u_j ) dx$  
and  $ \uppercase\expandafter{\romannumeral2}  =  \int_{\Omega}
      \partial_t \{ \prod(u_{i} + u_{i_{\infty}} )^{\gamma_i} \}
     [  \sum\limits^{m}_{i=1} (\alpha_i - \gamma_i) u_i \frac{u_{{\infty}} ^{\alpha - \gamma}}{u_{i_{\infty}}} 
 -  \sum\limits^{n}_{j= m+1} (\beta_j - \gamma_j) u_j \frac{u_{{\infty}} ^{\beta - \gamma}}{u_{j_{\infty}}} 
 + N(u,u_{\infty})]   
 ( - \sum\limits^{m}_{i=1} (\alpha_i - \gamma_i)   \frac{u_{{\infty}} ^{\alpha - \gamma}}{u_{i_{\infty}}} \partial_t u_i
+  \sum\limits^{n}_{j= m+1} (\beta_j - \gamma_j)   \frac{u_{{\infty}} ^{\beta - \gamma}}{u_{j_{\infty}}} \partial_t u_j ) dx$.

The idea for the proof in the following Lemma is similar to the estimate in Lemma \ref{first right side computation}.

\begin{lemma}  \label{second right side computation}
If $\forall t \geq 0$, 
$\sum\limits^{n}_{i=1}  \|  u_i(x,t)\|_{\infty} \leq \theta
$, we have

\begin{equation}
\begin{split}   
        & \uppercase\expandafter{\romannumeral1} + \uppercase\expandafter{\romannumeral2} \leq
 \sum\limits^{m}_{i=1} d_i \frac{(\alpha_i - \gamma_i)}{(\alpha - \beta_i)}  \frac{u_{{\infty}} ^{\alpha - \gamma}}{u_{i_{\infty}}} \|\nabla{ \partial_t u_i}\|^2_2 
        + \sum\limits^{n}_{j= m+1} d_j \frac{(\beta_j - \gamma_j)}{(\beta_j - \alpha_j)} \frac{u_{{\infty}} ^{\beta - \gamma}}{u_{j_{\infty}}}  \|\nabla{ \partial_t u_j}\|^2_2 
\end{split}
\end{equation}

\end{lemma}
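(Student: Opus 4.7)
The plan is to treat terms $\uppercase\expandafter{\romannumeral1}$ and $\uppercase\expandafter{\romannumeral2}$ separately and use essentially the same sign-analysis / Poincar\'e-absorption mechanism as in Lemma \ref{first right side computation}, with the role of $u_i$ now played by $\partial_t u_i$. The starting observation is that the conservation laws \eqref{conservation law on steady state} are preserved in time, so for every $l\in L$ and $k\in R$ the combinations
\[
\tilde\theta_{l,k} \;:=\; \frac{1}{\alpha_l-\beta_l}\,\partial_t u_l \;+\; \frac{1}{\beta_k-\alpha_k}\,\partial_t u_k
\]
satisfy $\int_\Omega \tilde\theta_{l,k}\,dx = \partial_t \int_\Omega \theta_{l,k}\,dx = 0$, so Poincar\'e applies and gives $\|\tilde\theta_{l,k}\|_2^2 \lesssim \|\nabla\tilde\theta_{l,k}\|_2^2 \lesssim \|\nabla\partial_t u_l\|_2^2+\|\nabla\partial_t u_k\|_2^2$. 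This is the crucial analogue of what made the first lemma work.

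For term $\uppercase\expandafter{\romannumeral1}$, write $A_t := \sum_{i\in L}(\alpha_i-\gamma_i)\frac{u_\infty^{\alpha-\gamma}}{u_{i,\infty}}\partial_t u_i$ and $B_t := \sum_{j\in R}(\beta_j-\gamma_j)\frac{u_\infty^{\beta-\gamma}}{u_{j,\infty}}\partial_t u_j$, so that the integrand has the structure $\prod(u_i+u_{i,\infty})^{\gamma_i}\,(A_t-B_t+\partial_t N)(-A_t+B_t)$. The quadratic piece $-(A_t-B_t)^2$ is $\leq 0$ and can be discarded (or kept for use), while the cross term is handled by Young's inequality $|\partial_t N \cdot (A_t-B_t)|\leq \varepsilon(A_t-B_t)^2+C_\varepsilon|\partial_t N|^2$; choosing $\varepsilon$ small enough that $\varepsilon(A_t-B_t)^2$ is absorbed, what remains is a multiple of $\int\prod(u_i+u_{i,\infty})^{\gamma_i}|\partial_t N|^2\,dx$. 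Now $\partial_t N$ is obtained from a polynomial whose every monomial contains at least two $u_i$ factors; hence each term of $\partial_t N$ is bilinear in $u$ and $\partial_t u$, i.e.\ a finite sum of expressions of the form $f(u,u_\infty)\,u_i\,\partial_t u_j$ with $f$ a bounded polynomial. Thus
\[
|\partial_t N|^2 \;\lesssim\; \|u\|_\infty^{2}\,\sum_{k}(\partial_t u_k)^2,
\]
and a sign-status case analysis on $\partial_t u$ identical to the one in Lemma \ref{first right side computation} (cases $1(a),1(b),2(a),2(b),2(c)$) yields $\sum_k(\partial_t u_k)^2 \lesssim \sum_{l\in L,r\in R}\tilde\theta_{l,r}^{\,2}$. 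Poincar\'e on $\tilde\theta_{l,r}$ then gives
\[
\uppercase\expandafter{\romannumeral1} \;\lesssim\; \|u\|_\infty^{2}\,\sum_{i=1}^{n}\|\nabla\partial_t u_i\|_2^2,
\]
which is absorbed into the dissipation as soon as $\theta$ is small enough.

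For term $\uppercase\expandafter{\romannumeral2}$, the new factor is $\partial_t\!\prod(u_i+u_{i,\infty})^{\gamma_i} \;=\; \sum_k \gamma_k\,\partial_t u_k\,\prod_{i\ne k}(u_i+u_{i,\infty})^{\gamma_i}(u_k+u_{k,\infty})^{\gamma_k-1}$, whose amplitude is uniformly controlled by a bounded polynomial in $\{\|u_i\|_\infty+u_{i,\infty}\}$ times a linear expression in $\partial_t u$. The bracket $\bigl[\sum(\alpha_i-\gamma_i)u_i\frac{u_\infty^{\alpha-\gamma}}{u_{i,\infty}}-\sum(\beta_j-\gamma_j)u_j\frac{u_\infty^{\beta-\gamma}}{u_{j,\infty}}+N\bigr]$ is, modulo the $N$ piece which is higher order in $\|u\|_\infty$, linear in $u$; so $\uppercase\expandafter{\romannumeral2}$ is a trilinear form roughly of the schematic shape $\int u \cdot (\partial_t u)^{2}\,dx$, bounded by $\|u\|_\infty \sum\|\partial_t u_i\|_2^2$. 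Applying the same sign-analysis / Poincar\'e trick to $\partial_t u$ gives $\sum\|\partial_t u_i\|_2^2 \lesssim \sum\|\nabla\partial_t u_i\|_2^2$, and smallness of $\theta$ makes the constant small enough that this too is absorbed into the dissipation term.

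The main obstacle is bookkeeping: term $\uppercase\expandafter{\romannumeral1}$ is bilinear in $\partial_t u$ with a nonlinear defect $\partial_t N$ that must not waste the dissipation, so one needs the Young inequality split sharp enough that the $(A_t-B_t)^2$ piece is reabsorbed before passing to $|\partial_t N|^2$, and the sign-case decomposition must be carried out cleanly on $\partial_t u$ so that Poincar\'e on the $\tilde\theta_{l,r}$'s can be invoked. Once $\theta$ is chosen so that every constant of the form $\frac{1}{4}\prod(\|u_i\|_\infty+u_{i,\infty})^{\gamma_i}\|f\|_\infty^2\,\theta^2$ is dominated by $\min_{i,j}\{d_i\frac{(\alpha_i-\gamma_i)}{(\alpha_i-\beta_i)}\frac{u_\infty^{\alpha-\gamma}}{u_{i,\infty}},\,d_j\frac{(\beta_j-\gamma_j)}{(\beta_j-\alpha_j)}\frac{u_\infty^{\beta-\gamma}}{u_{j,\infty}}\}$, the bound in the statement follows, mirroring the last step of the proof of Lemma \ref{first right side computation}.
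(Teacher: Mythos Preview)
Your overall strategy matches the paper's: define mean-zero combinations $\tilde\theta_{l,k}$ of $\partial_t u$ from the conservation laws, apply Poincar\'e, and absorb via smallness of $\|u\|_\infty$. The Young/completing-the-square step on $\uppercase\expandafter{\romannumeral1}$ is exactly the paper's $\uppercase\expandafter{\romannumeral1}\leq\tfrac14\int\prod(u_i+u_{i,\infty})^{\gamma_i}(\partial_tN)^2$.

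There is, however, a real gap in your sign-status step. You assert that the case analysis of Lemma~\ref{first right side computation} produces the pointwise bound $\sum_k(\partial_t u_k)^2 \lesssim \sum_{l,r}\tilde\theta_{l,r}^{\,2}$ in \emph{all} cases $1(a)$--$2(c)$, and hence $\sum\|\partial_t u_i\|_2^2\lesssim\sum\|\nabla\partial_t u_i\|_2^2$. This is false in case~1: when all $\partial_t u_i$ ($i\in L$) and all $\partial_t u_j$ ($j\in R$) carry opposite signs, the two summands in $\tilde\theta_{l,r}$ have opposite signs and can cancel while the $\partial_t u_k$ themselves remain large. A spatially constant configuration of this type gives $\nabla\partial_t u\equiv 0$ but $\|\partial_t u\|_2>0$, so your claimed Poincar\'e inequality cannot hold. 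If you look back at Lemma~\ref{first right side computation}, case~1 there is \emph{not} handled by any $|u|\lesssim|\theta|$ bound; it is disposed of by showing the integrand itself is $\leq 0$.

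The same problem hits your treatment of $\uppercase\expandafter{\romannumeral2}$: the estimate $\uppercase\expandafter{\romannumeral2}\lesssim\|u\|_\infty\sum\|\partial_t u_i\|_2^2$ is fine, but on the case-1 region you cannot then pass to $\sum\|\nabla\partial_t u_i\|_2^2$. The paper's remedy is to \emph{combine} $\uppercase\expandafter{\romannumeral1}+\uppercase\expandafter{\romannumeral2}$ in case~1, rewriting it as $\int \uppercase\expandafter{\romannumeral3}\cdot(-A_t+B_t)\,dx$ with $\uppercase\expandafter{\romannumeral3}$ the sum of the two brackets; the dominant part of $\uppercase\expandafter{\romannumeral3}$ is $u_\infty^{\gamma}(A_t-B_t)\leq 0$, every remaining term carries an extra $\|u\|_\infty$ factor, and $-A_t+B_t\geq 0$ in case~1, so the whole integrand is $\leq 0$. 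Once you insert this case-1 argument, the rest of your sketch (the case-2 absorption) coincides with the paper's proof.
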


\begin{proof}

Again we consider the sign situation for $\{ \partial_t u_i \}, {i=1,...,n}$  in two cases. 

\begin{enumerate}

\item The first case is when the sign for $\{\partial_t u_i\}_{i \in L}$ is different from $\{\partial_t u_j\}_{j \in R}$,
\begin{enumerate}
\item{} $\forall i \in L$ $ \partial_t u_i \leq 0$, $\forall j \in R$ $ \partial_t u_j \geq 0$.
\item{} $\forall i \in L$ $ \partial_t u_i \geq 0$ , $\forall j \in R$ $ \partial_t u_j \leq 0$.
\end{enumerate}

\item The rest situations belong to the second case
and we divide this case into three following parts,
\begin{enumerate}
\item{} $\{u_j\}_{j \in R}$ has positive and negative members.
\item{} $\forall j \in R$, $\partial_t u_j \leq 0$, $\exists  i \in L$ such that $\partial_t u_i \leq 0$.
\item{} $\forall j \in R$, $\partial_t u_j \geq 0$, $\exists  i \in L$ such that $\partial_t u_i \geq 0$.
\end{enumerate}

\end{enumerate}

We first deal with the second case, for each $l \in L$ with $ \partial_t u_l \leq 0$. The assumption implies either $ \{  \partial_t u_r \}_{r \in R}$ have different signs or $\forall j \in R$, $\partial_t u_j \leq 0$. W.l.o.g. we assume $ \partial_t u_N \leq 0$ for $N \in \{m+1,...,o\}$ and $ \partial_t u_P \geq 0$ for $P \in \{o+1,...,n\}$. Recall \eqref{first part general conservation law}, we have the similar conservation laws for $\partial_t u_i$, $\forall k \in R$
\begin{equation}
    \begin{split}  \label{second part general conservation law}
        \frac{1}{\alpha_l - \beta_l} \int_{\Omega}  \partial_t u_l \ dx + \frac{1}{\beta_k - \alpha_k} \int_{\Omega}  \partial_t u_k \ dx = 0
    \end{split}
\end{equation}
\\
Here we define $\theta^t_{l,k} = \frac{1}{\alpha_l - \beta_l}  \partial_t u_l + \frac{1}{\beta_k - \alpha_k}  \partial_t u_k$ and $\int_{\Omega} \theta^t_{l,k} \ dx = 0$.

For $N \in \{m+1,...,o\}$, we have
$$
| \partial_t u_l| \leq (\alpha_l - \beta_l) |\theta^t_{l,N}|, 
| \partial_t u_N| \leq (\beta_N - \alpha_N) |\theta^t_{l,N}|
$$

For $P \in \{o+1,...,n\}$, we have 
$$
\theta^t_{l,P} - \theta^t_{l,o} = \frac{1}{\beta_P - \alpha_P}  \partial_t u_P - \frac{1}{\beta_o - \alpha_o}  \partial_t u_o
$$
this implies
$$
0 \leq  \partial_t u_P \leq (\beta_P - \alpha_P)
( \theta_{l,P} - \theta_{l,o} )
$$
Combining the above two parts, we have for each $r \in R$,
\begin{equation}
 \partial_t u_r \leq \left\{
\begin{aligned}
(\beta_r - \alpha_r) |\theta^t_{l,r}| &, & r \in \{m+1,...,o\} \\
(\beta_r - \alpha_r)
( \theta^t_{l,r} - \theta^t_{l,o} ) &, & r \in \{o+1,...,n\}
\end{aligned}
\right.
\end{equation}

Then for each $l \in L$ with $ \partial_t u_l \geq 0$, the assumption again implies either $ \{  \partial_t u_r \}_{r \in R}$ have different signs or $\forall j \in R$, $\partial_t u_j \leq 0$. We can get the similar estimate, for each $l \in L$,
$$
|  \partial_t u_l | \leq   (\alpha_l - \beta_l) |\theta^t_{l,k_{l}}| 
$$
where $k_{l} \in R$ and $ \partial_t u_{k_{l}},  \partial_t u_l$ have the same sign.

Recall \eqref{second case nonlinear component estimate}, we can do the similar estimate on the right hand side of \eqref{second part energy estimate for L_2 in general}, since 
\begin{equation*}
\begin{split}
& \uppercase\expandafter{\romannumeral1} = \int_{\Omega}
         \prod(u_{i} + u_{i_{\infty}} )^{\gamma_i}
     [  \sum\limits^{m}_{i=1} (\alpha_i - \gamma_i)   \frac{u_{{\infty}} ^{\alpha - \gamma}}{u_{i_{\infty}}} \partial_t u_i
 -  \sum\limits^{n}_{j= m+1} (\beta_j - \gamma_j)   \frac{u_{{\infty}} ^{\beta - \gamma}}{u_{j_{\infty}}} \partial_t u_j 
\\&     +  \partial_t N(u,u_{\infty})  ]
    ( - \sum\limits^{m}_{i=1} (\alpha_i - \gamma_i)   \frac{u_{{\infty}} ^{\alpha - \gamma}}{u_{i_{\infty}}} \partial_t u_i
+  \sum\limits^{n}_{j= m+1} (\beta_j - \gamma_j)   \frac{u_{{\infty}} ^{\beta - \gamma}}{u_{j_{\infty}}} \partial_t u_j ) dx
\\& \leq 
\frac{1}{4} \int_{\Omega}
         \prod(u_{i} + u_{i_{\infty}} )^{\gamma_i}
     ( \partial_t N(u,u_{\infty}) )^2 dx
\end{split}
\end{equation*} 
and  
\begin{equation*}
\begin{split}
& \uppercase\expandafter{\romannumeral2}  =  \int_{\Omega}
      \partial_t \{ \prod(u_{i} + u_{i_{\infty}} )^{\gamma_i} \}
     [  \sum\limits^{m}_{i=1} (\alpha_i - \gamma_i) u_i \frac{u_{{\infty}} ^{\alpha - \gamma}}{u_{i_{\infty}}} 
 -  \sum\limits^{n}_{j= m+1} (\beta_j - \gamma_j) u_j \frac{u_{{\infty}} ^{\beta - \gamma}}{u_{j_{\infty}}} 
\\& + N(u,u_{\infty})]   
 ( - \sum\limits^{m}_{i=1} (\alpha_i - \gamma_i)   \frac{u_{{\infty}} ^{\alpha - \gamma}}{u_{i_{\infty}}} \partial_t u_i
+  \sum\limits^{n}_{j= m+1} (\beta_j - \gamma_j)   \frac{u_{{\infty}} ^{\beta - \gamma}}{u_{j_{\infty}}} \partial_t u_j )  dx
\\& \leq \| \sum\limits^{m}_{i=1} (\alpha_i - \gamma_i) u_i \frac{u_{{\infty}} ^{\alpha - \gamma}}{u_{i_{\infty}}} 
 -  \sum\limits^{n}_{j= m+1} (\beta_j - \gamma_j) u_j \frac{u_{{\infty}} ^{\beta - \gamma}}{u_{j_{\infty}}}  + N(u,u_{\infty}) \|_{\infty} \cdot \int_{\Omega}
\\&  | \partial_t \{ \prod(u_{i} + u_{i_{\infty}})^{\gamma_i} \}
 ( - \sum\limits^{m}_{i=1} (\alpha_i - \gamma_i)   \frac{u_{{\infty}} ^{\alpha - \gamma}}{u_{i_{\infty}}} \partial_t u_i
+  \sum\limits^{n}_{j= m+1} (\beta_j - \gamma_j)   \frac{u_{{\infty}} ^{\beta - \gamma}}{u_{j_{\infty}}} \partial_t u_j ) | \ dx
\end{split}
\end{equation*} 
where $\partial_t \{ \prod(u_{i} + u_{i_{\infty}})^{\gamma_i} \} = \sum \gamma_i  \frac{u_{\infty}^{\gamma}}{u_{i_{\infty}}} \partial_t u_i + N^{\gamma} (u,\partial_t u, u_{\infty})$ and $N^{\gamma}$ is the non-linear part for $\partial_t \{ \prod(u_{i} + u_{i_{\infty}})^{\gamma_i} \}$. By using Poincare inequality motivated from $\forall l \in L$, $\forall k \in R$, $\int_{\Omega} \theta^t_{l,k}(t,x) \ dx = 0$ and the smallness of $\| u_i \|_{\infty}$, we can get
\begin{equation}    \label{second part second case energy estimate in genral}
\sum\limits^{m}_{i=1} d_i \frac{(\alpha_i - \gamma_i)}{(\alpha - \beta_i)}  \frac{u_{{\infty}} ^{\alpha - \gamma}}{u_{i_{\infty}}} \|\nabla{ \partial_t u_i}\|^2_2 
        + \sum\limits^{n}_{j= m+1} d_j \frac{(\beta_j - \gamma_j)}{(\beta_j - \alpha_j)} \frac{u_{{\infty}} ^{\beta - \gamma}}{u_{j_{\infty}}}  \|\nabla{ \partial_t u_j}\|^2_2 \geq \uppercase\expandafter{\romannumeral1} + \uppercase\expandafter{\romannumeral2}
\end{equation}

In the first case, we first consider when $\forall i \in L, \partial_t u_i \leq 0$ and $\forall j \in R, \partial_t u_j \geq 0$. 
This implies
$$
 \sum\limits^{m}_{i=1} (\alpha_i - \gamma_i)   \frac{u_{{\infty}} ^{\alpha - \gamma}}{u_{i_{\infty}}} \partial_t u_i
-  \sum\limits^{n}_{j= m+1} (\beta_j - \gamma_j)   \frac{u_{{\infty}} ^{\beta - \gamma}}{u_{j_{\infty}}} \partial_t u_j \leq 0
$$
Then we can write
\begin{equation}
    \begin{split}   \label{second part first case sign estimate in general}
&  \uppercase\expandafter{\romannumeral1} + \uppercase\expandafter{\romannumeral2} 
   = \int_{\Omega}
      \uppercase\expandafter{\romannumeral3} \cdot
    ( - \sum\limits^{m}_{i=1} (\alpha_i - \gamma_i)   \frac{u_{{\infty}} ^{\alpha - \gamma}}{u_{i_{\infty}}} \partial_t u_i
+  \sum\limits^{n}_{j= m+1} (\beta_j - \gamma_j)   \frac{u_{{\infty}} ^{\beta - \gamma}}{u_{j_{\infty}}} \partial_t u_j ) dx     
    \end{split}
\end{equation}
where $\uppercase\expandafter{\romannumeral3} =     \prod(u_{i} + u_{i_{\infty}} )^{\gamma_i}
     [  \sum\limits^{m}_{i=1} (\alpha_i - \gamma_i)   \frac{u_{{\infty}} ^{\alpha - \gamma}}{u_{i_{\infty}}} \partial_t u_i
 -  \sum\limits^{n}_{j= m+1} (\beta_j - \gamma_j)    
    \frac{u_{{\infty}} ^{\beta - \gamma}}{u_{j_{\infty}}} \partial_t u_j  +
  + \partial_t N(u,u_{\infty})  ] 
+ \partial_t \{ \prod(u_{i} + u_{i_{\infty}} )^{\gamma_i} \}
     [  \sum\limits^{m}_{i=1} (\alpha_i - \gamma_i)  \frac{u_{{\infty}} ^{\alpha - \gamma}}{u_{i_{\infty}}} u_i
 -  \sum\limits^{n}_{j= m+1} (\beta_j - \gamma_j) 
 \frac{u_{{\infty}} ^{\beta - \gamma}}{u_{j_{\infty}}}   u_j  
 + N(u,u_{\infty})] $  and because of the smallness of $\|u_i\|_{\infty}$ the value (sign) of $\uppercase\expandafter{\romannumeral3}$ is controlled by $$
u_{{\infty}} ^{\gamma}
     [  \sum\limits^{m}_{i=1} (\alpha_i - \gamma_i)   \frac{u_{{\infty}} ^{\alpha - \gamma}}{u_{i_{\infty}}} \partial_t u_i
 -  \sum\limits^{n}_{j= m+1} (\beta_j - \gamma_j)    
    \frac{u_{{\infty}} ^{\beta - \gamma}}{u_{j_{\infty}}} \partial_t u_j  ]  \leq 0
$$
Therefore \eqref{second part first case sign estimate in general} and the above inequality implies that in the first case 
\begin{equation} \label{second part first case energy estimate in genral}
    \uppercase\expandafter{\romannumeral1} + \uppercase\expandafter{\romannumeral2} \leq 0
\end{equation}

\end{proof}

Combining \eqref{second part second case energy estimate in genral} \eqref{second part first case energy estimate in genral} and the equation \eqref{second part energy estimate for L_2 in general}, we get the second part of energy estimate

\begin{lemma}  \label{second right hand side estimate}
If $\forall t \geq 0$, 
$\sum\limits^{n}_{i=1}  \|  u_i(x,t)\|_{\infty} \leq \theta
$, then we have

\begin{equation} 
\frac{d}{dt}( \sum\limits^{m}_{i=1} (\frac{(\alpha_i - \gamma_i)}{(\alpha - \beta_i)}  \frac{u_{{\infty}} ^{\alpha - \gamma}}{u_{i_{\infty}}} \| \partial_t u_i\|^2_2
 +  \sum\limits^{n}_{j= m+1} \frac{(\beta_j - \gamma_j)}{(\beta_j - \alpha_j)} \frac{u_{{\infty}} ^{\beta - \gamma}}{u_{j_{\infty}}}  \| \partial_t u_j\|^2_2 )  
 \leq 0
\end{equation}
, this implies $\sum\limits^{n}_{i=1}  \| \partial_t  u_i(x,t)\|_2$ decay w.r.t time.

\end{lemma}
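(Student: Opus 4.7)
The plan is to derive the weighted energy identity displayed immediately above the lemma, namely equation \eqref{second part energy estimate for L_2 in general}, and then simply absorb its right-hand side into the Dirichlet-energy dissipation on the left by invoking Lemma \ref{second right side computation}. No new analytic input is needed beyond what the preceding lemma already provides; the work is to produce the identity with the correct weights and test functions so that its right-hand side matches the combination $\mathrm{I}+\mathrm{II}$ appearing in Lemma \ref{second right side computation}.

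First I would commute $\partial_t$ with the Neumann boundary condition to obtain $\nabla(\partial_t u_i)\cdot\mathbf n=0$ on $\partial\Omega$, so that the subsequent integration by parts on the Laplacian contributes no boundary term. Then I would differentiate the evolution equation \eqref{equation in general} in time, multiply the equation for $\partial_t u_i$ with $i\in L$ by $\frac{(\alpha_i-\gamma_i)}{(\alpha_i-\beta_i)}\frac{u_\infty^{\alpha-\gamma}}{u_{i_\infty}}\partial_t u_i$, the equation for $\partial_t u_j$ with $j\in R$ by $\frac{(\beta_j-\gamma_j)}{(\beta_j-\alpha_j)}\frac{u_\infty^{\beta-\gamma}}{u_{j_\infty}}\partial_t u_j$, integrate over $\Omega$, integrate the diffusion term by parts, and sum over $i,j$. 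The left-hand side produces exactly the weighted quantities $\tfrac12\frac{d}{dt}\sum_i w_i\|\partial_t u_i\|_2^2$ and $\sum_i w_i d_i\|\nabla\partial_t u_i\|_2^2$ appearing in \eqref{second part energy estimate for L_2 in general}, while the right-hand side splits naturally into $\mathrm{I}$ (terms coming from differentiating the bracketed linear-plus-$N$ expression) and $\mathrm{II}$ (terms coming from differentiating the prefactor $\prod(u_i+u_{i_\infty})^{\gamma_i}$) exactly as defined in the paragraph preceding Lemma \ref{second right side computation}.

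Having produced the identity with $\mathrm{I}+\mathrm{II}$ on the right, I would invoke Lemma \ref{second right side computation}, whose conclusion under the hypothesis $\sum_i\|u_i\|_\infty\le\theta$ is precisely
\[
\mathrm{I}+\mathrm{II}\le \sum_{i=1}^{m} d_i\frac{(\alpha_i-\gamma_i)}{(\alpha_i-\beta_i)}\frac{u_\infty^{\alpha-\gamma}}{u_{i_\infty}}\|\nabla\partial_t u_i\|_2^2 + \sum_{j=m+1}^{n} d_j\frac{(\beta_j-\gamma_j)}{(\beta_j-\alpha_j)}\frac{u_\infty^{\beta-\gamma}}{u_{j_\infty}}\|\nabla\partial_t u_j\|_2^2.
\]
Substituting into \eqref{second part energy estimate for L_2 in general} and moving the dissipation term to the right cancels it against this bound, leaving $\tfrac12\frac{d}{dt}\sum_i w_i\|\partial_t u_i\|_2^2\le 0$, which is the stated inequality. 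Monotonicity of the weighted sum then yields decay of $\sum_i\|\partial_t u_i\|_2$ as claimed.

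The main (and essentially only) obstacle is bookkeeping: I must make sure that the identity I write matches the $\mathrm{I},\mathrm{II}$ decomposition of Lemma \ref{second right side computation} on the nose, so that the sign of $\mathrm{II}$ is the one that the earlier sign analysis (using the auxiliary conservation-law variables $\theta^t_{l,k}$ with $\int_\Omega\theta^t_{l,k}=0$ and Poincar\'e) can absorb. Once the weights, groupings and sign of $\mathrm{II}$ agree with the statement of the preceding lemma, the proof reduces to algebra and a direct appeal to that lemma.
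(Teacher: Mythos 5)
Your proposal is correct and follows essentially the same route as the paper: the paper obtains the lemma exactly by combining the weighted energy identity \eqref{second part energy estimate for L_2 in general} with the bound $\mathrm{I}+\mathrm{II}\le$ (dissipation) from Lemma \ref{second right side computation}, which is precisely what you do. Your additional remark that $\partial_t$ commutes with the Neumann condition (so the integration by parts produces no boundary term) is a detail the paper leaves implicit, but it does not change the argument.
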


Finally, we proof Theorem \ref{stable theorem in general} by Lemma \ref{first right hand side estimate} and Lemma \ref{second right hand side estimate}.

\begin{proof}

We first do the elliptic estimate for the system \eqref{system n}. It's not hard to check that the system satisfies the Supplementary Condition and the Neumann boundary condition satisfies the Complementing Boundary Condition.
By using Theorem \ref{nirenberg}, we have for $i=1,...,n$,
\begin{equation}
\begin{split}     \label{elliptic estimate in general}
& \|u_i\|_{H_2} 
\leq K (\| ( (u + u_{\infty})^{\gamma} - u_{\infty}^{\gamma} )
     [  \sum\limits^{m}_{i=1} (\alpha_i - \gamma_i) u_i \frac{u_{{\infty}} ^{\alpha - \gamma}}{u_{i_{\infty}}} 
 -  \sum\limits^{n}_{j= m+1} (\beta_j - \gamma_j) u_j \frac{u_{{\infty}} ^{\beta - \gamma}}{u_{j_{\infty}}} 
\\& + N (u,u_{\infty}) + u_{\infty}^{\gamma} N (u,u_{\infty}) ]  \|_2
+ \sum\limits_{i=1}^{n}\|\partial_t u_i \|_2
+ \sum\limits_{i=1}^{n}\| u_i \|_2))
\end{split}
\end{equation}
where $K$ is a constant depends on origin equation and bounded domain.
By using Sobolev Embedding Inequality, we can have
$$
\|v_i\|_{L^{\infty}} \lesssim  \sum\limits_{i=1}^{n}\|u_i\|_2 + \sum\limits_{i=1}^{n}\|\partial_t u_i\|_2
$$
the above holds because $\| u_i \|_{\infty}$ is sufficiently small which guarantees $(u + u_{\infty})^{\gamma} - u_{\infty}^{\gamma}$, $N (u,u_{\infty}) \ll 1$.

The continuity argument implies $L^{\infty}$ will be always small to follow Lemma \ref{first right hand side estimate} and Lemma \ref{second right hand side estimate}. 
As long as the initial $L^2$ on $\{ \partial_t u_i \}$ and $L^{\infty}$ on $\{ u_i \}$ are sufficiently small, $L^{\infty}$ can keep being small along with the time $t$ while $L^2$ is non-increasing from the estimate which implies the existence of weak solution around the positive equilibrium. 

The \textit{Remark 3.1} in \cite{PSU17} shows that for a reversible reaction with nonnegative initial data in $L^1\cap L\log L$ if the solution is globally (in time) essentially bounded, the solution converges exponentially to the complex-balanced equilibrium in $L^1$ norm. 
By using the interpolation with $L^1$ and boundness of $L^{\infty}$, we can get the exponential convergence in $L^p (1 < p < \infty)$ sense.

Now we return to the origin equation on $\{u_i\}_{i=1,...,n}$,

$$
 \partial_t u_{i}- d_i \Delta u_{i} = (\beta_i - \alpha_i)(\Tilde{u}^{\alpha} - \Tilde{u}^{\beta})
$$

Because of the Poincare inequality, we have
$$
\|\partial_t  u - \bar{\partial_t  u} \|_{L^2} \lesssim \|\nabla \partial_t u \|_{L^2}
$$

This implies
$$
\|\partial_t u_i\|_{L^2} \lesssim \|\nabla \partial_t u_i\|_{L^2} + |\int_{\Omega} \partial_t u_i dx|
$$

From the equation, since we know $
\sum\limits^{n}_{i=1} (\| \partial_t u_i(x,0)\|_2 + \|  u_i(x,0)\|_{\infty}) \leq \theta \ll 1
$,
\begin{equation}
\begin{split}
    & |\int_{\Omega} \partial_t u_i dx| = |d_i \int_{\Omega} \Delta u_i dx + \int_{\Omega} (\beta_i - \alpha_i)(\Tilde{u}^{\alpha} - \Tilde{u}^{\beta}) dx|
    \\& \lesssim u_{\infty}^{\gamma} 
     \|  \sum\limits^{m}_{i=1} (\alpha_i - \gamma_i) u_i \frac{u_{{\infty}} ^{\alpha - \gamma}}{u_{i_{\infty}}} 
 -  \sum\limits^{n}_{j= m+1} (\beta_j - \gamma_j) u_j \frac{u_{{\infty}} ^{\beta - \gamma}}{u_{j_{\infty}}} \|_{L^1} \lesssim e^{-lt}
\end{split}
\end{equation}
where exponential decaying rate $l$ is determined from the interpolation. 
Recall the estimate in Lemma \ref{second right hand side estimate} where we get 
\begin{equation}
\begin{split}   \label{second part energy estimate for L_2 at last}
        & \frac{1}{2}\frac{d}{dt}( \sum\limits^{m}_{i=1} (\frac{(\alpha_i - \gamma_i)}{(\alpha - \beta_i)}  \frac{u_{{\infty}} ^{\alpha - \gamma}}{u_{i_{\infty}}} \| \partial_t u_i\|^2_2
 +  \sum\limits^{n}_{j= m+1} \frac{(\beta_j - \gamma_j)}{(\beta_j - \alpha_j)} \frac{u_{{\infty}} ^{\beta - \gamma}}{u_{j_{\infty}}}  \| \partial_t u_j\|^2_2 ) 
        \\& + (\sum\limits^{m}_{i=1} d_i \frac{(\alpha_i - \gamma_i)}{(\alpha - \beta_i)}  \frac{u_{{\infty}} ^{\alpha - \gamma}}{u_{i_{\infty}}} \|\nabla{ \partial_t u_i}\|^2_2 
        + \sum\limits^{n}_{j= m+1} d_j \frac{(\beta_j - \gamma_j)}{(\beta_j - \alpha_j)} \frac{u_{{\infty}} ^{\beta - \gamma}}{u_{j_{\infty}}}  \|\nabla{ \partial_t u_j}\|^2_2 )
\\& \leq 0 
\end{split}
\end{equation}

Then we can have the following
$$
\frac{1}{2}\frac{d}{dt}(\sum\limits^{n}_{i=1} \| \partial_t u_i\|^2_2) + (\sum\limits^{n}_{i=1} \| \partial_t u_i\|^2_2) \lesssim \sum\limits^{n}_{i=1} |\int_{\Omega} \partial_t u_i dx|
\lesssim e^{-lt}
$$
The Gronwall's inequality implies that $\sum\limits^{n}_{i=1} \| \partial_t u_i\|^2_2$ decays exponentially.
Then the  elliptic estimate \eqref{elliptic estimate in general}
implies exponential convergence to positive equilibrium in $H^{2}$  sense.

\end{proof}

\section{Acknowledgment}

This work is a part of the author’s thesis. He thanks his advisor Chanwoo Kim for suggesting this problem and discussions. This research is partly supported by NSF DMS-1501031, DMS-1900923. We also thank the anonymous referees for their suggestions to improve the presentation.

%\section{Reference}


\begin{thebibliography}{99}


\bibitem{Strauss.1992}
Strauss, Walter A.
{\em Partial Differential Equations: An Introduction,}
John Wiley \& Sons, Inc., New York, 1992.




\bibitem{ADN.1964}
Agmon, S. and Douglis, A. and Nirenberg, L.
{\em Estimates near the boundary for solutions of elliptic partial
differential equations satisfying general boundary conditions.{II},}
Comm. Pure Appl. Math., no. 17 (1964), pp. 35--92.



\bibitem{GJCA2019}
Craciun, G. and Jin, J. and Pantea, C. and Tudorascu, A.  
{\em Convergence to the complex balanced equilibrium for some chemical reaction-diffusion systems with boundary equilibria,} {arXiv:1812.07707}, 2019.




	
\bibitem{CMT2019}
Cupps B. P., Morgan J., Tang B. Q.
{\em Uniform boundedness for reaction-diffusion systems with mass dissipation,} {arXiv:1905.10599}, 2019.	


\bibitem{PSU17}
{ M. Pierre, T. Suzuki, H. Umakoshi},
{\em Asymptotic behavior in chemical reaction-diffusion systems with boundary equilibria,} 
J. Appl. Anal. Comp. {\bf 8}, no. 3 (2018), pp. 836--858. 

	
\bibitem{Guo-Strauss}
Guo, Y. and Strauss, W. A. 
{\em Instability of periodic BGK equilibria,}
Comm. Pure Appl. Math., 48: 861-894, 1995.

\bibitem{GHS}
Guo, Y., Hallstrom, C., Spirn, D.
{\em Dynamics near Unstable, Interfacial Fluids,}
Communications in Mathematical Physics Vol. 270, Issue 3 (2007), pp. 635--689.

\bibitem{Bao2018}
Tang, B.Q. 
{\em Close-to-equilibrium regularity for reaction–diffusion systems,}
J. Evol. Equ. (2018) 18: 845.	




\bibitem{DFPV07}{ L. Desvillettes, K. Fellner, M. Pierre, J. Vovelle}, {\em About Global Existence for Quadratic Systems of Reaction-Diffusion,} J. Adv. Nonlinear Stud. {\bf 7} (2007), pp. 491--511.

\bibitem{DFT2}{ L. Desvillettes, K. Fellner, B.Q. Tang}, 
{\em Trend to equilibrium for Reaction-Diffusion system arising from Complex Balanced Chemical Reaction Networks,} SIAM J. Math. Anal. {\bf 49}, no. 4 (2017), 2666--2709.

\bibitem{Feinberg.1972}
M. Feinberg.
{\em Complex balancing in general kinetic systems,}
Archive for Rational Mechanics and Analysis 49:3,
1972.


\bibitem{Mincheva} {M. Mincheva, Maya, D. Siegel}, {\em Stability of mass action reaction--diffusion systems, Nonlinear Analysis: Theory, Methods \& Applications,} {\bf 8} (2004), pp. 1105--1131.



\bibitem{FT18} 
K. Fellner, B.Q. Tang,
{\em Convergence to equilibrium of renormalised solutions to nonlinear chemical reaction–diffusion systems,}
Z. Angew. Math. Phys. (2018) 69.3, pp. 1--30. https://doi.org/10.1007/s00033-018-0948-3.


\bibitem{Craciun.gac} G.~Craciun. \newblock Toric Differential Inclusions and a Proof of the Global Attractor Conjecture, \newblock {arXiv:1501.02860}, 2016.


\bibitem{Horn.1972}
F. Horn, R. Jackson,
{\em General mass action kinetics,}
Archive for Rational Mechanics and Analysis 47,  1972.

\bibitem{Horn.1974}
F. Horn, 
{\em The dynamics of open reaction systems,} 
Mathematical aspects of chemical and biochemical problems and quantum chemistry, 
SIAM-AMS Proceedings Vol. VIII,  1974. 





	




\end{thebibliography}
\end{document}